\journalname{ }
\title
{Incompatible sets of gradients and metastability}
\author{J.M. Ball \and R.D. James}
\institute{ Mathematical Institute,   University of Oxford,
Andrew Wiles Building,
Radcliffe Observatory Quarter,
Woodstock Road,
Oxford,
OX2 6GG,
 U.K. \email{ball@maths.ox.ac.uk}\and Department of Aerospace
Engineering and Mechanics, University of Minnesota, Minneapolis, MN 55455, USA \email{james@umn.edu}}
\date{\today}
\newcommand{\lbl}[1]{\label{#1}}
\newcommand{\arr}{\rightarrow}
\newcommand{\be}{\begin{eqnarray}}
\newcommand{\ee}{\end{eqnarray}}
\newcommand{\R}{\mathbb{R}}
\newcommand{\half}{\frac{1}{2}}
\newcommand{\ep}{\varepsilon}
\newcommand{\eps}{\varepsilon}
\newcommand{\calM}{\mathcal M}
\newcommand{\calC}{\mathcal C}
\newcommand{\om}{\Omega}
\newcommand{\Div}{{\rm div}\,}
\newcommand{\supp}{{\rm supp}\,}
\newcommand{\rank}{{\rm rank}\,}
\newcommand{\dist}{{\rm dist}\,}
\newcommand{\Dist}{\rm dist}
\newcommand{\sign}{{\rm sign}\,}
\newcommand{\1}{{\bf 1}}
\newcommand{\weak}{\rightharpoonup}
\newcommand{\weakstar}{\stackrel{*}{\rightharpoonup}}
\newcommand{\av}{-\hspace{-.15in}\int}
\newcommand{\avsmall}{-\hspace{-.112in}\int}
\newcommand{\mL}{{\mathcal L}}
\newcommand{\mC}{{\mathcal C}}
\newcommand{\mK}{{\mathcal K}}
\newcommand{\mE}{{\mathcal E}}
\newcommand{\qc}{{{\rm qc}}}
\newtheorem{thm}{Theorem}
\newtheorem{cor}[thm]{Corollary}
\newtheorem{lem}[thm]{Lemma}
\newtheorem{prop}[thm]{Proposition}
\newtheorem{defn}{Definition}
\newtheorem{rem}{Remark}
\numberwithin{equation}{section}
\def\def\IPEfile{#}\input{\IPEfile}1{\def\IPEfile{#1}\input{\IPEfile}}
\begin{document}
 \maketitle
 \markboth{ }{}
\begin{abstract}  We give a mathematical analysis of a concept of metastability
induced by incompatibility.  The physical setting is a single parent phase, just about
to undergo transformation to a product phase of lower energy density.  
Under certain conditions of incompatibility of
the energy wells of this energy density, we show that
the parent phase   is metastable in a  strong sense, namely it is
a local minimizer of the free energy in an
$L^1$ neighbourhood of its deformation.  The reason behind this result 
is that, due to the incompatibility of the energy wells, a small nucleus of the product
phase is necessarily accompanied by a stressed transition layer whose energetic
cost exceeds the energy lowering capacity of the nucleus.  We define and characterize incompatible sets of 
matrices, in terms of which the transition layer estimate at the heart of the proof of metastability is expressed.  Finally we discuss
connections with experiment and place this concept of metastability in the wider
context of recent theoretical and experimental research on metastability and hysteresis.
 
\end{abstract}

\section{Introduction}
\setcounter{equation}{0}

Materials that undergo first order phase transformations without diffusion
typically exhibit hysteresis loops, that is, loops in a plot of a measured property vs.~temperature
as the temperature is cycled back and forth through the transformation temperature.
It is the rule rather than the exception that the area within these loops does not tend to zero
as the temperature is cycled more and more slowly.  Thus, while there is an issue of the 
time-scale of such experiments, hysteresis is apparently not entirely due to viscosity  
or other thermally activated mechanisms.  An alternative explanation
is metastability, as quantified by the presence of local minimizers in a continuum level elastic 
energy.  This paper is a mathematical analysis of this possibility appropriate to cases in
which the two phases are geometrically incompatible in a certain precise sense. 

To illustrate our analysis in a simple case, let $\om\subset\R^n$ be a bounded domain with sufficiently smooth boundary $\partial\om$, and consider the energy functional
\be 
\lbl{simplecase}
I(y)=\int_\om W(Dy(x))\,dx,
\ee
defined for mappings $y:\om\to\R^m$, where $Dy(x)=\left(\frac{\partial y_i}{\partial x_\alpha}(x)\right)$ denotes the gradient of $y$, so that $Dy(x)$ belongs to the set $M^{m\times n}$  of real $m\times n$ matrices for each $x$. Suppose that $W:M^{m\times n}\to \R$ is a continuous function satisfying $W(A)\geq C(1+|A|^p)$ for constants $C>0$, $p>1$, and having exactly two local minimizers at matrices $A_1, A_2$ with  $W(A_1)> W(A_2)$. Thus, imposing no boundary conditions on $\partial\om$, the global minimizers of $I$ are given by affine mappings $y_{\rm min}(x)=a_2+A_2x, a_2\in\R^m$ having constant gradient $A_2$. Under suitable structural conditions on $W$, we prove that if $A_1, A_2$ are incompatible in the sense that $\rank (A_1-A_2)>1$, and if $W(A_1)-W(A_2)$ is sufficiently small, then  $y^*(x)=a_1+A_1x, a_1\in\R^m$ is a local minimizer of $I$ in $L^1$,  i.e. there exists $\sigma>0$ such that $I(y)\geq I(y^*)$ if $\|y-y^*\|_1<\sigma$. 

Notice that if  $\|y-y^*\|_1<\sigma$ then it can happen that $Dy(x)$ belongs to a small neighbourhood of $A_2$ on a set $E\subset\om$ of positive measure, so that $W(Dy(x))<W(A_1)$ for $x\in E$. 
The basic idea underlying the analysis is that, if a nucleus $E$ of the product phase of
arbitrary form is introduced in this way 
so as to lower the energy,
then, due to the incompatibility between the two phases, this nucleus is necessarily accompanied by
a  transition layer that interpolates between the nucleus and the parent phase $A_1$.  This transition layer costs
more energy than the lowering of energy due to the presence of the new phase.  The analysis
is delicate because the energy \eqref{simplecase} contains no contribution from interfacial energy that would
dominate at small scales.  Thus, for example, scaling down of the nucleus and transition
layer using geometric similarity preserves the ratio of transition layer and nucleus energies.

The above result is a special case of  the considerably more general metastability theorem (Theorem \ref{metstab}) proved in this paper, in which the parent and product  phases are represented by disjoint compact sets of matrices $K_1$ and $K_2$ respectively. Since the multiwell elastic energies we consider can exhibit nonattainment of the minimum of $I$, we formulate the problem more generally in terms of gradient Young measures, so that the metastability theorem applies to microstructures.  We assume that $K_1, K_2$ are incompatible in the sense that 
if an $L^\infty$ gradient Young measure $\nu=(\nu_x)_{x\in\om}$ is such that $\supp\nu_x\subset K_1\cup K_2$ for a.e. $x\in\om$, then either $\supp\nu_x\subset K_1$ for a.e. $x\in\om$, or  $\supp\nu_x\subset K_2$ for a.e. $x\in\om$.
 We can then 
estimate the energy of a transition layer that must be present if a gradient Young measure has nontrivial support
near both $K_1$ and $K_2$.  The delicate case is when the support of the gradient Young measure
near either $K_1$ or $K_2$ is vanishingly small; to handle this, we find a way of
moving and rescaling suitable convex subsets of $\Omega$ so as to get  half of the support  
of the gradient  Young measure in the subset near $K_1$, and half near $K_2$, which enables us to use a version of the Vitali covering lemma to obtain the desired estimate.  This method of varying the volume
fractions of a gradient Young measure has other applications 
and will be developed in a forthcoming paper \cite{u6}.  Using the estimate
for the energy of the transition layer, we show that a gradient Young measure  supported on $K_1$ is a local minimizer with respect to the $L^1$ norm of the difference between the underlying deformations, for energy densities
that have a well at $K_1$ and a slightly lower well at $K_2$.

The shape of the domain $\Omega$ matters for our analysis.  It is possible to 
defeat metastability as discussed here using the ``rooms and passages'' domain of Fraenkel \cite{fraenkel79}, which consists of a bounded domain formed from an infinite sequence of  rooms of vanishingly small diameters, each connected to the two adjacent rooms by passages of even smaller diameter.  
For such a domain the parent phase is not an $L^1$ local minimizer, because one can reduce the energy through  deformations that are arbitrarily close in $L^1$, whose gradients lie entirely in the parent phase except for a nucleus of the product phase occupying a single room, together with transition layers in the two adjacent passages.  To quantify the
effect of domain shape on metastability we introduce a concept of a  
domain connected with respect to rigid-body motions of a convex set $C$ (see Section \ref{tp}), for which the method outlined in the previous paragraph can be applied,  the constants in the transition layer estimate depending on $C$. This shape dependence is
expected to have physical implications regarding the size of the hysteresis, 
for example in more conventional domains with
sharply outward pointing corners.  This phenomenon is therefore different
from the well-known lowering of hysteresis that occurs in magnetism due to sharp
{\it inward} pointing corners, and which is one explanation of the coercivity paradox.

In applications,  $K_2$ usually grows with a parameter, either stress or temperature
(Section \ref{sectapp}). As discussed by C. Chu and the authors \cite{j43}, one can derive
upper bounds to the size of the hysteresis by considering test functions.  The
easiest upper bound is found when the stress, say, reaches a point where $K_2$
has grown sufficiently that there are matrices $A \in K_1$ and $B \in K_2$ such that
$\rank(B - A) = 1$.  This upper bound is directly related to the Schmid Law \cite{schmidboas50},
though the conventional reasoning behind this law is completely different than
the one offered here (see Section \ref{sectvar}).  In fact, for the problem of variant rearrangement discussed in \cite{j43} and 
Section \ref{sectvar} there is a more complicated test function that implies a loss of metastability
earlier than the simple rank-one connection between $A$ and $B$ \cite{j43}.
Curiously, these more complicated test functions require   $\partial \Omega$ to have a sharp corner.
A more careful analysis of Forclaz \cite{forclaz14} seems to suggest that this is
necessary.

Our differential constraint implying compatibility conditions is curl$\,F = 0$,
where $F$ is a gradient.  Our framework applies to other constraints in the
theory of compensated compactness, except possibly that, in the case of compact sets $K_1$ and $K_2$,
we use Zhang's lemma (see \cite{zhang92} and Lemma \ref{zhanglemma}) to show that the
definition of incompatibility is independent of the Sobolev exponent $p$. 
The interesting question 
of what are the incompatible sets for other important differential 
constraints seems not to have been explicitly investigated.

The first metastability result of the type given here is due to Kohn \& Sternberg \cite{kohnsternberg89} who used $\Gamma$-convergence to prove under quasiconvexity assumptions the existence of local minimizers for \eqref{simplecase} with gradient near $A_1$ (see also \cite{kohnetal2000} for an improved version in particular showing that $y^*$ is a local minimizer). 
Our work is also related to the important results of Grabovsky \& Mengesha \cite{grabovskymengesha09}. 
They prove, under assumptions of quasiconvexity, quasiconvexity at the
boundary, and nonegativity of the second variation,
all imposed locally at the gradients of a  $C^1$ solution of the Euler-Lagrange equations,
that this solution is an $L^{\infty}$-local minimizer.  
Our approach differs from theirs in that we assume a multiwell structure of
the energy, but make much weaker assumptions on the eventual local minimizer, which in our
case is allowed to be a gradient Young measure.
The idea behind the concept of metastability that we discuss here was first introduced
without proof by C. Chu and the authors in \cite{j43,j44}. 

The plan of the paper is as follows. In Section \ref{tp} we give some necessary technical background and preliminary results concerning  gradient Young measures, quasiconvexity and quasiconvexifications, and define and discuss $C$-connected 
domains. In Section \ref{is} we define incompatible sets, and characterize them in terms of quasiconvexity, analyzing various examples. The fundamental transition layer estimate is proved in Section \ref{trans}, and applied to prove metastability in Section \ref{meta}. Finally, in Section \ref{sectapp} we give various applications of the metastability theorem. The first application is to the experiments of Chu \& James on variant rearrangement in CuAlNi single crystals under biaxial dead loads, which originally motivated this paper. Then we discuss purely dilatational phase transformations, and the interesting case of Terephthalic acid. Finally, in Section \ref{sectper} we give a perspective on
metastability and hysteresis, discussing in particular 
 other concepts of metastability \cite{james05,james09,kneupfer11,delville10,james13,kneupfer13,zwicknagl13} that have  recently
appeared in the literature, as well as experiments that show a dramatic 
dependence of the size of the hysteresis  on conditions of compatibility
 \cite{cui06,james09,zarnetta10,song13}.  These 
observations answer some questions and raise others.

\section{Technical preliminaries}
\label{tp}
\subsection{Gradient Young measures and quasiconvexity}Let $m\geq 1, n\geq 1$.  We denote by
$M^{m\times n}$ the set
of real $m\times n$ matrices, and by $SO(n)$ the rotation group of matrices $R\in M^{n\times n}$ with $R^TR=\1,\,\det R=1$. Lebesgue measure in $\R^n$ is denoted by $\mL^n$. Let $\om\subset\R^n$ be  a bounded domain. Fix $p$ with $1 \leq p
\leq \infty$.  We consider $\R^m-$valued distributions $y$ in $\Omega$ whose gradients $Dy$ belong to $L^p(\Omega;M^{m\times n})$. Without further hypotheses on $\Omega$ such distributions need not in general belong to $W^{1,p}(\Omega;\R^m)$, but it is proved in Maz'ya \cite[p.~21]{mazya2011} that they do so if $\Omega$ satisfies {\it the cone condition} with respect to a fixed cone $C^*=\{x\in\R^n:|x|\leq\rho,\;x\cdot e_1\geq |x|\cos\alpha\}$, where $\rho>0,\;0<\alpha<\frac{\pi}{2}$; that is,  any point $x\in\om$ is the vertex of a cone congruent to $C^*$ and contained in $\om$, so that $x+QC^*\subset\om$ for some $Q\in SO(n)$.  

 Given a sequence $y^{(j)}$ such that $Dy^{(j)}$ is weakly  convergent in $L^{p}(\Omega;M^{m\times n})$ (weak* if $p=\infty$) there exist (see, for example, \cite{p21}) a subsequence $y^{(\mu)}$ and a family of probability measures $(\nu_x)_{x\in\Omega}$ on   $M^{m\times n}$, depending measurably on $x\in\Omega$,  such that for any continuous function $f:M^{m\times n}\to\R$ and measurable $G\subset\Omega$ 
$$f(Dy^{(\mu)})\weak \langle \nu_x,f\rangle \mbox{ in } L^1(G)$$
whenever this weak limit exists. We call the family $\nu=(\nu_x)_{x\in\Omega}$   the $L^{p}$ {\it gradient Young measure} generated by the sequence $Dy^{(\mu)}$ (alternative names in common use are $W^{1,p}$ gradient Young measure, or $p$-gradient Young measure). If $\nu_x=\nu$ is independent of $x$ we say that the gradient Young measure is {\it homogeneous}. If $1<p\leq\infty$ then the weak relative compactness condition is equivalent to boundedness of $Dy^{(j)}$ in $L^{p}(\Omega; M^{m\times
n})$, whereas if $p=1$ it is equivalent to equi-integrability of $Dy^{(j)}$. If   $K\subset M^{m\times n}$ is closed and  $Dy^{(\mu)}\to K$ in measure, that is $$\lim_{j\to\infty}\mL^n(\{x\in\Omega:\dist(Dy^{(\mu)}(x),K)>\varepsilon\})= 0\mbox{ for all }\varepsilon>0,$$ 
then  $\supp\nu_x\subset K$ for a.e. $x\in\Omega$.

\begin{defn}
\lbl{quasiconvexdef}
A  function $\varphi:M^{m\times n}\to\R$ is {\it quasiconvex} if 
\be\lbl{quasiconvexity}\av_G\varphi(A+D\theta(x))\,dx\geq \varphi(A)\ee
for any bounded open set $G\subset\R^n$, all $A\in M^{m\times n}$ and any $\theta\in W_0^{1,\infty} (G;\R^m)$, whenever the integral on the left-hand side exists. 
\end{defn}
As is well known (see, for example, \cite[p.~172]{dacorogna}) this definition does not depend on $G$. Also any quasiconvex function $\varphi:M^{m\times n}\to\R$ is rank-one convex and thus continuous (see \cite[Lemma 4.3]{muller99}).

We recall the characterization of $L^{p}$ gradient Young measures in terms of quasiconvexity due to Kinderlehrer \& Pedregal. In the following statement we combine together various of their results. 
\begin{thm}[Kinderlehrer \& Pedregal \cite{kinderlehrerpedregal91,kinderlehrerpedregal94}]\label{kp}
Let $1\leq p\leq\infty$. A family $\nu=(\nu_x)_{x\in\Omega}$ of probability measures on $M^{m\times n}$, depending measurably on $x$, is an $L^{p}$ gradient Young measure if and only if
\\ $(i)$ $\bar\nu_x:=\int_{M^{m\times n}}A\,d\nu_x(A)=Dy(x)$ for a.e. $x\in\Omega$ and some $\R^m-$valued distribution $y$ with  $Dy\in L^{p}(\Omega;M^{m\times n})$\\
$(ii)$ for any quasiconvex $\varphi:M^{m\times n}\to\R$ satisfying $|\varphi(A)|\leq C(1+|A|^p)$ for all $A\in M^{m\times n}$, where $C>0$ is constant, (no growth condition required if $p=\infty$) we have 
$$\langle \nu_x,\varphi\rangle :=\int_{M^{m\times n}}\varphi(A)\,d\nu_x(A)\geq \varphi(\bar\nu_x)\mbox{ for a.e. }x\in\Omega,$$
$(iii)$ if $1\leq p<\infty$ then $\int_\Omega\int_{M^{m\times n}}|A|^p\,d\nu_x(A)\,dx<\infty$; if $p=\infty$ then $\supp \nu_x\subset G$ for some compact $G\subset M^{m\times n}$.\\
Furthermore, if $1\leq p<\infty$ any $L^{p}$ gradient Young measure $(\nu_x)_{x\in\Omega}$ is generated by some sequence of gradients $Dz^{(j)}$ (possibly different from the generating sequence $Dy^{(j)}$ in the definition) such that $|Dz^{(j)}|^p$ converges weakly in $L^1(\Omega)$ to some $g\in L^1(\Omega)$. 
\end{thm}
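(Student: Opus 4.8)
The plan is to prove the stated equivalence in its two directions; necessity of $(i)$--$(iii)$ will be comparatively soft, and the converse will carry the weight, with the ``Furthermore'' clause serving as a preliminary reduction. \emph{Necessity.} For $1\le p<\infty$, any bounded sequence in $W^{1,p}$ admits, after passing to a subsequence, a decomposition $Dy^{(j)}=Dz^{(j)}+Dw^{(j)}$ in which $|Dz^{(j)}|^p$ is equi-integrable, $w^{(j)}\to 0$ in measure, and $Dz^{(j)}$ generates the same gradient Young measure (Chacon's biting lemma / the Fonseca--M\"uller--Pedregal decomposition lemma); the de la Vall\'ee-Poussin criterion then upgrades this, along a subsequence, to $|Dz^{(j)}|^p\weak g$ in $L^1(\om)$, which is the ``Furthermore'' clause and also lets one assume generating sequences have equi-integrable $p$-th powers (automatic when $p=\infty$). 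With such a sequence, the fundamental theorem on Young measures gives $f(Dz^{(\mu)})\weak\langle\nu_x,f\rangle$ in $L^1(\om)$ for every continuous $f$ of $p$-growth; taking $f(A)=A$ componentwise yields $(i)$ and $f(A)=|A|^p$ yields $(iii)$. For $(ii)$, given quasiconvex $\varphi$ of $p$-growth, replace it by $\varphi+C(1+|\cdot|^p)$ to assume $\varphi\ge 0$ (still quasiconvex), apply Morrey's weak lower semicontinuity theorem on every open $G'\subset\om$ to obtain $\int_{G'}\varphi(\bar\nu_x)\,dx\le\liminf_\mu\int_{G'}\varphi(Dz^{(\mu)})\,dx=\int_{G'}\langle\nu_x,\varphi\rangle\,dx$, and conclude via the Lebesgue differentiation theorem.

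\emph{Sufficiency, the homogeneous core.} The crux is the following: if $\nu$ is a probability measure on $M^{m\times n}$ with $\int|A|^p\,d\nu<\infty$ (compact support if $p=\infty$), with barycenter $\bar\nu=A$ and $\langle\nu,\varphi\rangle\ge\varphi(A)$ for every quasiconvex $\varphi$ of $p$-growth, then $\nu$ is a homogeneous $L^p$ gradient Young measure generated by a sequence $A+D\theta^{(j)}$ with $\theta^{(j)}\in W_0^{1,p}(Q;\R^m)$ on a fixed cube $Q$. Let $\mathcal{P}_A$ denote the set of all such measures; it is convex (splice rescaled copies of two generating sequences on a subpartition of $Q$) with barycenter $A$. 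Viewing measures of finite $p$-th moment as the dual of the space of continuous functions of $p$-growth (for $p=\infty$ one works in $C(\overline{B_R})$ for $R$ large), suppose $\nu\notin\overline{\mathcal{P}_A}$; Hahn--Banach then provides a continuous $f$ of $p$-growth with $\langle\nu,f\rangle<\inf_{\mu\in\mathcal{P}_A}\langle\mu,f\rangle$. But by the relaxation theorem (Dacorogna's formula for the quasiconvex envelope), $\inf_{\mu\in\mathcal{P}_A}\langle\mu,f\rangle=\inf\{\av_Q f(A+D\theta)\,dx:\theta\in W_0^{1,p}(Q;\R^m)\}=f^{\qc}(A)$, and $f^{\qc}$ is quasiconvex and, since $f$ has $p$-growth, so does $f^{\qc}$, with $f^{\qc}\le f$; hence by hypothesis $\langle\nu,f\rangle\ge\langle\nu,f^{\qc}\rangle\ge f^{\qc}(A)$ --- a contradiction. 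Thus $\nu\in\overline{\mathcal{P}_A}$, and it remains to promote this to $\nu\in\mathcal{P}_A$, which I expect to be the main obstacle.

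\emph{Closedness of $\mathcal{P}_A$ and the inhomogeneous case.} Closedness is proved by a diagonal argument using metrizability of the $p$-th moment topology on sets of uniformly bounded $p$-th moment; the delicate point is to prevent loss of mass at infinity when passing to a limit of gradient Young measures, and it is exactly here that the finite-moment hypothesis $(iii)$ is essential (for $p=\infty$ the uniform bounds make this routine). A truncation within $\mathcal{P}_A$ additionally arranges that the generating sequence $A+D\theta^{(j)}$ has $|D\theta^{(j)}|^p$ equi-integrable. For a general measurable family $(\nu_x)$ satisfying $(i)$--$(iii)$, with underlying $y$, localize: on a fine cubic partition of $\om$, the Scorza--Dragoni theorem and Lebesgue-point considerations show that on most cubes $Q_i$ the family $\nu_x$ is close to the homogeneous measure $\nu_{x_i}$, whose barycenter is close to $Dy(x_i)$, while $y$ may there be replaced by its affine interpolant of gradient $Dy(x_i)$. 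Apply the homogeneous core on each $Q_i$ to obtain generating sequences with affine boundary values matching across faces, patch them by cutting off near the cube boundaries --- the corrections being negligible in $L^p$ because the pieces have equi-integrable $p$-th powers (or are uniformly bounded when $p=\infty$) while the cut-off regions have arbitrarily small measure --- and extract a diagonal sequence over mesh size and $j$ that generates $(\nu_x)$.
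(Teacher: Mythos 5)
This theorem is not proved in the paper at all: it is quoted from Kinderlehrer \& Pedregal, and the only proof-related content in the paper is the remark that their argument implicitly assumes some regularity of $\Omega$ (via the Sobolev embedding) and can be repaired by partitioning $\Omega$ into scaled cubes. So your proposal cannot be checked against a proof in this paper; judged against the original Kinderlehrer--Pedregal argument, your route is essentially theirs (necessity by weak lower semicontinuity of quasiconvex integrands plus equi-integrable generating sequences; sufficiency by the homogeneous case via Hahn--Banach separation combined with the relaxation formula $\inf_{\theta\in W_0^{1,p}}\,\fint_Q f(A+D\theta)\,dx=f^{\mathrm{qc}}(A)$, then localization and gluing), with the cosmetic difference that you obtain the ``Furthermore'' clause from the Fonseca--M\"uller--Pedregal decomposition lemma rather than from the truncation arguments internal to their proof. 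That substitution is legitimate and arguably cleaner, though note that for $p=1$ the decomposition lemma is not needed: the definition already demands equi-integrability of $|Dy^{(j)}|$, so Dunford--Pettis gives the weakly convergent subsequence directly.

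The genuine gap is exactly where you say ``I expect the main obstacle'' to be, and your sketch of it does not hold up as stated. First, measures of finite $p$-th moment are not ``the dual of the space of continuous functions of $p$-growth''; that dual is strictly larger (it contains functionals carrying mass at infinity), and the weighted space is not separable, so your appeal to ``metrizability of the $p$-th moment topology on sets of uniformly bounded $p$-th moment'' fails: one must either separate with a separable test class (e.g.\ $f$ with $f(A)/(1+|A|^p)$ having a limit at infinity), at which point loss of moment mass along a sequence in $\mathcal{P}_A$ becomes a real possibility, or test against the full weighted class, at which point sequential compactness/metrizability is lost. Second, hypothesis (iii) controls only the single target measure $\nu$, not the approximating sequence from $\mathcal{P}_A$, so it cannot be what ``prevents loss of mass at infinity''; what is needed is a uniform equi-integrability (truncation/decomposition) statement for the generating sequences, which is also required to justify the identity $\inf_{\mu\in\mathcal{P}_A}\langle\mu,f\rangle=f^{\mathrm{qc}}(A)$ for a sign-changing separating $f$ (without equi-integrable $p$-th powers, $\lim\fint_Q f(A+D\theta^{(j)})\,dx$ need not equal $\langle\mu,f\rangle$). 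These points --- weak* closedness of the class of homogeneous $W^{1,p}$ gradient Young measures with prescribed barycenter, without escape of $p$-th moment, and the equi-integrable normalization of generating sequences --- are precisely the technical heart of Kinderlehrer--Pedregal's paper, and your proposal asserts rather than proves them. (Smaller repairable items: the separating $f^{\mathrm{qc}}$ must be shown finite with two-sided $p$-growth before hypothesis (ii) can be applied to it; and in the inhomogeneous gluing the affine interpolants on adjacent cubes do \emph{not} match across faces, so the cut-off construction you mention is not optional but is the actual argument and needs the equi-integrability control to be quantified.)
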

\begin{rem} \rm In \cite{kinderlehrerpedregal94} no assumption is stated concerning the bounded domain $\Omega$, but the  proof  uses the Sobolev embedding theorem for $\Omega$ and thus implicitly makes some assumption. However, the proof can be easily modified by, in Lemma 5.1, writing $\Omega$ as a disjoint union of scaled copies of  a cube, rather than of scaled copies of $\Omega$.  For an alternative approach to $L^{p}$ gradient Young measures see Sychev \cite{sychev99}.
\end{rem}
We will make frequent use of the following version of Zhang's lemma that is a consequence of M\"uller \cite[Corollary 3]{muller99a}. The original version is due to Zhang \cite[Lemma 3.1]{zhang92}.
\begin{lemma} 
\label{zhanglemma}
Let $K\subset M^{m\times n}$ be compact, and suppose $\nu=(\nu_x)_{x\in\Omega}$ is an $L^{1}$ gradient Young measure with $\supp\nu_x\subset K$ for a.e. $x\in\Omega$. Then $\nu$ is an $L^{\infty}$ gradient Young measure; that is it can be generated by a sequence $z^{(j)}$ whose gradients $Dz^{(j)}$ are  bounded in $L^{\infty}(\Omega;M^{m\times n})$.
\end{lemma}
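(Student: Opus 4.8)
The plan is to replace a generating sequence for $\nu$ by one with uniformly bounded gradients, by Lipschitz truncation. First, by Theorem~\ref{kp} (or directly by the Dunford--Pettis theorem applied to the weakly $L^1$-convergent generating sequence supplied by the definition) choose a sequence $y^{(j)}$ generating $\nu$ with $(Dy^{(j)})$ equi-integrable; if $\Omega$ is not regular enough for the $y^{(j)}$ to lie in $W^{1,1}(\Omega;\R^m)$, one localizes to an exhaustion of $\Omega$ by balls, on each of which the truncation below applies with an $L^\infty$-bound on the gradient independent of the ball, and concludes by a diagonal argument. Put $R:=\max_{A\in K}|A|$. Since $\supp\nu_x\subset K$ for a.e.\ $x$, the continuous function $A\mapsto\dist(A,K)$ vanishes $\nu_x$-a.e., and since $0\le\dist(A,K)\le|A|+R$ the functions $\dist(Dy^{(j)},K)$ are equi-integrable, hence converge weakly in $L^1(\Omega)$ to $\langle\nu_x,\dist(\cdot,K)\rangle=0$; testing against the constant $1$ gives $\dist(Dy^{(j)},K)\to0$ in $L^1(\Omega)$.

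Next I would invoke the sharp truncation lemma of M\"uller \cite[Corollary~3]{muller99a} (in the spirit of Zhang \cite[Lemma~3.1]{zhang92}). Fixing any level $\lambda>0$, it furnishes $v^{(j)}\in W^{1,\infty}(\Omega;\R^m)$ such that $\dist(Dv^{(j)}(x),K)\le C\lambda$ for a.e.\ $x\in\Omega$ and
\begin{eqnarray*}
\mL^n(\{v^{(j)}\neq y^{(j)}\})&\le&\frac{C}{\lambda}\int_{\{\dist(Dy^{(j)},K)>\lambda\}}\dist(Dy^{(j)},K)\,dx\\
&\le&\frac{C}{\lambda}\,\|\dist(Dy^{(j)},K)\|_{L^1(\Omega)},
\end{eqnarray*}
the right-hand side tending to $0$ as $j\to\infty$. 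The essential point --- and the only place compactness of $K$ is used --- is that a single fixed truncation level already bounds the new gradients, $\|Dv^{(j)}\|_{L^\infty(\Omega;M^{m\times n})}\le R+C\lambda$ uniformly in $j$, while the exceptional sets $\{v^{(j)}\neq y^{(j)}\}$ still shrink to measure zero; if $K$ were unbounded, no fixed level would control the gradients and this step would break down.

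It then remains to verify that $(Dv^{(j)})$ generates the same gradient Young measure $\nu$. On $S_j:=\{v^{(j)}=y^{(j)}\}$ one has $Dv^{(j)}=Dy^{(j)}$ a.e.\ (the $W^{1,1}$-function $v^{(j)}-y^{(j)}$ vanishes there), while $\mL^n(\Omega\setminus S_j)\to0$. Given continuous $f:M^{m\times n}\to\R$ with $|f(A)|\le C(1+|A|)$, the sequence $f(Dy^{(j)})$ is equi-integrable with $f(Dy^{(j)})\weak\langle\nu_x,f\rangle$ in $L^1(\Omega)$, and $f(Dv^{(j)})$ is bounded by $\sup\{|f(A)|:|A|\le R+C\lambda\}$; therefore $f(Dv^{(j)})\1_{\Omega\setminus S_j}\to0$ and $f(Dy^{(j)})\1_{\Omega\setminus S_j}\to0$ in $L^1(\Omega)$, the latter by equi-integrability together with $\mL^n(\Omega\setminus S_j)\to0$. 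Consequently $f(Dv^{(j)})=f(Dy^{(j)})\1_{S_j}+f(Dv^{(j)})\1_{\Omega\setminus S_j}\weak\langle\nu_x,f\rangle$ in $L^1(\Omega)$, so $(Dv^{(j)})$ generates $\nu$ (the case of $f$ without a growth bound reduces to this one, since the $Dv^{(j)}$ and $K$ lie in a fixed ball on which $f$ agrees with a function of linear growth); being bounded in $L^\infty(\Omega;M^{m\times n})$ it exhibits $\nu$ as an $L^\infty$ gradient Young measure.

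I expect the real obstacle to be the truncation step itself: a self-contained argument would require running the Acerbi--Fusco/Liu Lipschitz-truncation based on the Hardy--Littlewood maximal function, in the refined form that keeps $\dist(Dv^{(j)},K)$ small rather than merely bounding $|Dv^{(j)}|$. Granting that input, the only delicate matter is the interplay highlighted above --- using compactness of $K$ so that one fixed level simultaneously yields an $L^\infty$-bound and negligible exceptional sets --- plus the routine but necessary check that the modified sequence reproduces the original Young measure rather than merely producing some $L^\infty$-bounded sequence of gradients.
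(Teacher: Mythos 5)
Your proposal follows essentially the same route as the paper, which offers no independent argument but simply derives the lemma from the truncation theorem of M\"uller \cite[Corollary 3]{muller99a} (the sharp form of Zhang's lemma \cite[Lemma 3.1]{zhang92}) --- exactly the key input you invoke, with the remaining steps (equi-integrability of $\dist(Dy^{(j)},K)$, vanishing measure of the exceptional sets, and the check that the truncated sequence regenerates $\nu$) being the standard bookkeeping the paper leaves to the references. Your reconstruction of that bookkeeping is correct, so there is nothing substantive to add.
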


\subsection{Quasiconvex functions taking the value $+\infty$}
Some care needs to be taken when defining quasiconvexity for functions which take the value $+\infty$. For example, as pointed out in \cite[Example 3.5]{j26}, the function $\varphi$ defined by $\varphi(0)=\varphi(a\otimes b)=0, \varphi(A)= +\infty$ otherwise, where $a\in \R^m, b\in \R^n$ are nonzero vectors, satisfies \eqref{quasiconvexity} for any bounded open set $G\subset\R^n$, all $A\in M^{m\times n}$ and any $\theta\in W_0^{1,\infty} (G;\R^m)$, even though $\varphi$ is not rank-one convex and $I(y)=\int_\Omega\varphi(Dy)\,dx$ is not sequentially weak* lower semicontinuous in $W^{1,\infty}(\Omega;\R^m)$. See \cite[p.~9]{p31} for further discussion, and another example related to Example \ref{4matrices}. In this paper we will define quasiconvexity for functions which take the value $+\infty$ differently from in \cite{j26}, as follows. 
\begin{defn}
\lbl{qcinfty}
A function $\varphi:M^{m\times n}\arr\R\cup\{\infty\}$ is quasiconvex if there exists a nondecreasing
sequence $\varphi^{(j)}:M^{m\times n}\arr\R$ of continuous quasiconvex functions with
$$\varphi(A)=\lim_{j\arr\infty}\varphi^{(j)}(A)\hspace{.2in}\text{for all }\ A\in M^{m\times n}.$$
\end{defn}
\begin{rem}\rm
\label{qclsc}
Note that any quasiconvex $\varphi:M^{m\times n}\arr\R\cup\{\infty\}$ is lower
semicontinuous because it is the supremum of continuous functions.
\end{rem}
\begin{rem}\rm
\lbl{consistency}
Suppose $\varphi:M^{m\times n}\to \R$ is quasiconvex according to the above definition. Let $G$ be a bounded domain, $A\in M^{m\times n}$, $\theta\in W^{1,\infty}_0(G;\R^m)$. Then for each $j$ we have
$$\av_G\varphi(A+D\theta(x))\,dx\geq\av_G\varphi^{(j)}(A+D\theta(x))\,dx\geq \varphi^{(j)}(A),$$
the left-hand integral being well defined by Remark \ref{qclsc}, so that passing to the limit $j\to\infty$  we deduce that \eqref{quasiconvexity} holds.  Thus $\varphi$ is quasiconvex in the sense of Definition \ref{quasiconvexdef}.
\end{rem}
Let $\varphi:M^{m\times n}\arr\R\cup\{\infty\}$ be quasiconvex.  Let $(\nu_x)_{x\in\om}$ be an $L^{\infty}$ gradient Young measure    corresponding to a sequence $y^{(k)}$ with
$Dy^{(k)}\weakstar Dy$   in $L^{\infty}(\om;\R^m)$. For each $j$ we have
$$\int_\om\varphi^{(j)}(Dy^{(k)})\,dx\leq\int_\om\varphi(Dy^{(k)})\,dx.$$
Since $\varphi^{(j)}$ is quasiconvex, letting $k\arr\infty$ we obtain, using the lower semicontinuity of $\int_\Omega\varphi^{(j)}(Dz)\,dx$ with respect to weak* convergence in $W^{1,\infty}(\Omega;\R^m)$ (see, for example, \cite[p. 369]{dacorogna}),
$$\int_\om\varphi^{(j)}(Dy)\,dx\leq\int_\om\langle\nu_x,\varphi^{(j)}
\rangle\,dx\leq\liminf_{k\arr\infty}\int_\om\varphi(Dy^{(k)})\,dx.$$
(In order to apply the lower semicontinuity when we just have $Dy^{(k)}\weakstar Dy$ in $L^\infty$, we can, for example, write $\Omega$ as a disjoint union of cubes. In each cube we can fix $y^{(k)}$ to be zero at the centre of the cube, from which weak* convergence in $W^{1,\infty}$ follows. Thus we have the desired lower semicontinuity on each cube, from which that on $\Omega$ follows.)
Letting $j\arr\infty$, noting that $\varphi^{(j)}(Dy)\geq \varphi^{(1)}(Dy)$, and using monotone convergence, it follows that
$$-\infty<\int_\om\varphi(Dy)\,dx\leq\int_\om\langle\nu_x,\varphi\rangle\,dx\leq
\liminf_{k\arr\infty}\int_\om\varphi(Dy^{(k)})\,dx.$$
Thus the functional
$$I(y)=\int_\om\varphi(Dy)\,dx$$
is sequentially   lower semicontinuous with respect to weak* convergence of the gradient in $L^\infty$.
Also, if $\nu_x=\nu$ is homogeneous then we obtain
$$\varphi(\bar\nu)\leq\langle\nu,\varphi\rangle.$$
\begin{lemma}
\label{qcequiv}
Assume that $\varphi:M^{m\times n}\to\R\cup\{+\infty\}$ is such that ${\rm dom}\,\varphi=\{A\in M^{m\times n}:\varphi(A)<\infty\}$ is bounded. Then $\varphi$ is quasiconvex if and only if $\varphi$ is lower semicontinuous and 
$\langle\mu,\varphi\rangle\geq\varphi(\bar\mu)$ for all homogeneous $L^{\infty}$ gradient Young measures $\mu$.
\end{lemma}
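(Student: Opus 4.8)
The two directions are of very different difficulty. For the forward implication, if $\varphi$ is quasiconvex in the sense of Definition~\ref{qcinfty} then it is lower semicontinuous by Remark~\ref{qclsc}, and the Jensen inequality $\langle\mu,\varphi\rangle\ge\varphi(\bar\mu)$ for homogeneous $L^\infty$ gradient Young measures $\mu$ is already contained in the discussion immediately preceding the lemma (such a $\mu$ is generated by a sequence of gradients bounded in $L^\infty$ converging weak* to the constant matrix $\bar\mu$). So the substance of the lemma is the converse, and the plan for it is to produce the nondecreasing sequence of Definition~\ref{qcinfty} explicitly.

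Since ${\rm dom}\,\varphi$ is bounded, fix $R$ with ${\rm dom}\,\varphi\subset\bar B_R:=\{A\in M^{m\times n}:|A|\le R\}$; because $\varphi$ is lower semicontinuous on the compact set $\bar B_R$ and identically $+\infty$ off it, there is $M_0$ with $\varphi\ge-M_0$ on $M^{m\times n}$ (the trivial case $\varphi\equiv+\infty$ being handled by the constants $j$). Let $\varphi_j(A):=\inf_{B\in M^{m\times n}}\bigl[\varphi(B)+j|A-B|\bigr]$ be the Lipschitz regularisation of $\varphi$ from below: it is finite and $j$-Lipschitz, satisfies $-M_0\le\varphi_j\le\varphi$ and the coercivity estimate $\varphi_j(B)\ge-M_0+j\,\dist(B,\bar B_R)$, and $\varphi_j\nearrow\varphi$ pointwise precisely because $\varphi$ is lower semicontinuous. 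Put $\varphi^{(j)}:=Q\varphi_j$, the quasiconvex envelope of the continuous integrand $\varphi_j$: by the relaxation theorem (\cite{dacorogna}) this is continuous and quasiconvex, it obeys $-M_0\le\varphi^{(j)}\le\varphi_j\le\varphi$, and it is nondecreasing in $j$ since $Q$ is order-preserving. Thus $(\varphi^{(j)})$ is admissible in Definition~\ref{qcinfty}, and everything reduces to proving $\sup_j\varphi^{(j)}=\varphi$.

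Suppose this failed, so that $\lim_j\varphi^{(j)}(A_0)=L<\varphi(A_0)$ at some $A_0$; fix $c$ with $L<c<\varphi(A_0)$. The relaxation theorem represents $\varphi^{(j)}(A_0)=Q\varphi_j(A_0)$ as the infimum of the averages of $\varphi_j(A_0+D\theta)$ over bounded open sets and $\theta\in W_0^{1,\infty}$, hence also as the infimum of $\langle\mu,\varphi_j\rangle$ over homogeneous gradient Young measures $\mu$ with $\bar\mu=A_0$; so choose for each $j$ such a $\mu_j$ with $\langle\mu_j,\varphi_j\rangle<c$ (possible since $Q\varphi_j(A_0)\le L<c$). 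The coercivity estimate then forces $\int\dist(B,\bar B_R)\,d\mu_j(B)<(c+M_0)/j\to0$, so $\{\mu_j\}$ has uniformly bounded first moments and, by a Chebyshev estimate, uniformly small tails; hence it is tight and uniformly integrable. Passing to a weak* limit $\mu_j\weakstar\mu_\infty$: the limit $\mu_\infty$ is a probability measure supported in $\bar B_R$ with $\bar\mu_\infty=A_0$; it is again a homogeneous gradient Young measure, since the Jensen inequality against continuous quasiconvex integrands of linear growth passes to the limit under the uniform integrability (via Theorem~\ref{kp}); and, being compactly supported, it is a homogeneous $L^\infty$ gradient Young measure by Lemma~\ref{zhanglemma}. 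Finally, for each fixed $N$ we have $\varphi_j\ge\varphi_N$ once $j\ge N$, with $\varphi_N$ continuous and bounded below, so $\liminf_j\langle\mu_j,\varphi_j\rangle\ge\liminf_j\langle\mu_j,\varphi_N\rangle\ge\langle\mu_\infty,\varphi_N\rangle$; letting $N\to\infty$ and using monotone convergence yields $\langle\mu_\infty,\varphi\rangle\le\liminf_j\langle\mu_j,\varphi_j\rangle\le c<\varphi(A_0)=\varphi(\bar\mu_\infty)$, which contradicts the hypothesis applied to $\mu=\mu_\infty$. Hence $\sup_j\varphi^{(j)}=\varphi$, and $\varphi$ is quasiconvex.

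I expect the delicate point to be this passage to the limit in the minimising Young measures. It forces the approximating functions to be genuinely continuous --- which is why the Lipschitz regularisation is used rather than the naive truncation $\min(\varphi,j)$, which is only lower semicontinuous --- it requires the near-minimisers $\mu_j$ not to leak mass to infinity, which is exactly what the coercivity of $\varphi_j$ (inherited from boundedness of ${\rm dom}\,\varphi$) guarantees, and it requires the class of homogeneous gradient Young measures to be closed under weak* limits, for which the tightness and uniform integrability established above are precisely what makes the Kinderlehrer--Pedregal characterisation transfer to $\mu_\infty$.
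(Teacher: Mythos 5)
Your proof is correct, and it shares the paper's overall skeleton: approximate $\varphi$ from below by continuous functions, quasiconvexify them, and, if the supremum fell short at some matrix, manufacture a compactly supported homogeneous gradient Young measure with that barycentre which violates the assumed Jensen inequality, upgrading it to an $L^\infty$ measure via Lemma \ref{zhanglemma}. Where you genuinely diverge is in the implementation of the critical limit step. The paper takes McShane approximations $\psi^{(j)}$ which, because ${\rm dom}\,\varphi$ is bounded, can be endowed with a \emph{fixed} superlinear growth $\psi^{(j)}(A)\geq C|A|^p-C_1$, $p>1$; near-minimizing test functions for $(\psi^{(j)})^{\rm qc}(A)$ then have gradients bounded in $L^p$, hence generate an $L^p$ gradient Young measure, which is homogenized by averaging over the cube (via \cite[Theorem 3.1]{kinderlehrerpedregal94}), a truncation $\min(k,\psi^{(k)})$ being used to pass to the limit. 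You instead use the Lipschitz (Pasch--Hausdorff/Moreau--Yosida) regularization $\varphi_j$, whose coercivity $\varphi_j\geq -M_0+j\,\dist(\cdot,\bar B_R)$ grows with $j$ but is only of linear order in $|A|$; since this yields no fixed $L^p$ bound on minimizing sequences, you compensate by taking weak* limits of the near-minimizing probability measures $\mu_j$ themselves, using the $O(1/j)$ tail bounds for tightness and asymptotic uniform integrability of first moments, identifying the limit as a homogeneous gradient Young measure through the Kinderlehrer--Pedregal characterization (Theorem \ref{kp}, applied with $p=1$ to quasiconvex integrands of linear growth), and only then invoking Lemma \ref{zhanglemma}. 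Your route buys a canonical monotone approximating sequence and dispenses with the McShane step and the explicit truncation, at the price of having to verify closure of the class of homogeneous gradient Young measures under these weak* limits, a point the paper sidesteps by building the fixed $p$-growth into the approximations from the outset; both arguments are by contradiction and of essentially equal strength, so the choice is a matter of which compactness mechanism one prefers.
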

\begin{proof}
The necessity of the conditions has already been proved without the extra condition on $\varphi$. Conversely, suppose that $\varphi$ is lower semicontinuous and that $\langle\mu,\varphi\rangle\geq\varphi(\bar\mu)$ for all homogeneous gradient Young measures $\mu$. Since ${\rm dom}\,\varphi$ is bounded and $\varphi$ lower semicontinuous, $\varphi$ is bounded below. Also, the lower semicontinuity implies (for example by  \cite[Theorem 3.8, p.~76]{mcshane}) that there is a nondecreasing sequence of continuous functions $\psi^{(j)}$ such that $\lim_{j\to\infty}\psi^{(j)}(A)=\varphi(A)$ for all $A\in M^{m\times n}$. Since ${\rm dom}\,\varphi$ is bounded we may also assume that $\psi^{(j)}(A)\geq C|A|^p-C_1$ for all $A\in M^{m\times n}$, where $C>0$ and $C_1$ are constants and $p>1$. Let $\varphi^{(j)}=(\psi^{(j)})^{\rm qc}$ be the quasiconvexification of $\psi^{(j)}$, that is the supremum of all continuous real-valued quasiconvex functions less than or equal to $\psi^{(j)}$. Then $\varphi^{(j)}$ is continuous and quasiconvex \cite[p.~271]{dacorogna}, and it suffices to show that $\lim_{j\to\infty}\varphi^{(j)}(A)=\varphi(A)$ for all $A$. Suppose this is not the case, that there exists $A\in M^{m\times n}$ with $\varphi^{(j)}(A)\leq M<\infty$, where $M<\varphi(A)$. By the characterization \cite[p.~271]{dacorogna} of quasiconvexifications,
$$\varphi^{(j)}(A)=\inf_{\theta\in W^{1,\infty}_0(Q;\R^m)}\av_Q\psi^{(j)}(A+D\theta)\,dx,$$
where $Q=(0,1)^n$.
  Hence there exist $\varepsilon>0$ and a sequence $\theta^{(j)}\in W^{1,\infty}_0(Q;\R^m)$ such that 
$$\av_Q\psi^{(j)}(A+D\theta^{(j)})\,dx\leq M+\varepsilon<\varphi(A).$$
Thus for any $j\geq k$ we have
$$\av_Q\tilde\psi^{(k)}(A+D\theta^{(j)})\,dx\leq\av_Q\psi^{(k)}(A+D\theta^{(j)})\,dx\leq M+\varepsilon,$$
where $\tilde\psi^{(k)}=\min(k,\psi^{(k)})$.
From the growth condition on $\psi^{(j)}$, a subsequence (not relabelled) of $A+D\theta^{(j)}$ generates an $L^p$ gradient Young measure $(\nu_x)_{x\in \Omega}$.
Passing to the limit $j\to\infty$, noting that $\tilde\psi^{(k)}$ is bounded, we deduce that 
$$\av_Q\langle\nu_x,\tilde\psi^{(k)}\rangle\,dx\leq M+\varepsilon,$$
and then letting $k\to\infty$ we obtain by monotone convergence that
$$\av_Q\langle\nu_x,\varphi\rangle\,dx\leq M+\varepsilon.$$
But then $\langle\mu,\varphi\rangle\leq M+\varepsilon$, where $\mu=\avsmall_Q\nu_x\,dx$, which by \cite[Theorem 3.1]{kinderlehrerpedregal94} is a homogeneous $L^p$ gradient Young measure with centre of mass $\bar\mu=A$. Since $\varphi(A)=\infty$ for $A\not\in {\rm dom}\,\varphi$ we deduce that $\supp\mu\subset \overline{ {\rm dom}\,\varphi}$. Since $\overline{ {\rm dom}\,\varphi}$ is compact, it follows from   Lemma \ref{zhanglemma} that $\mu$ is an $L^{\infty}$ gradient Young measure. Hence by our assumption we have that $\varphi(A)\leq M+\varepsilon<\varphi(A)$, a contradiction.
\end{proof}
\begin{rem}\rm
\lbl{newrem}
Lemma \ref{qcequiv} is a $p=\infty$ version of a result of Kristensen \cite{kristensen94}, who showed using a similar argument that 
  if $\varphi:M^{m\times n}\to\R\cup\{+\infty\}$  satisfies the growth condition
\be 
\lbl{growthp}
\varphi(A)\geq C|A|^p-C_1 \mbox{ for all }A\in M^{m\times n},
\ee
for some $C>0,C_1,  p>1$, then $\varphi$ is the supremum of a nondecreasing sequence of continuous quasiconvex functions $\varphi^{(j)}:M^{m\times n}\to\R$ satisfying $M\leq \varphi^{(j)}(A)\leq \alpha_j|A|^p+\beta_j$ for constants $\alpha_j>0, \beta_j,M$ (so that in particular $\varphi$ is quasiconvex according to Definition \ref{qcinfty}) if and only if $\varphi$ is lower semicontinuous and 
\be 
\lbl{qcineq}
\langle\mu,\varphi\rangle\geq\varphi(\bar\mu)
\ee
for any homogeneous $L^p$ gradient Young measure $\mu$ (i.e. $\varphi$ is closed $W^{1,p}$ quasiconvex in the sense of Pedregal \cite{pedregal94}).

Note, however, that \eqref{qcineq} is not in general a necessary condition for such $\varphi$ to be quasiconvex, as can be seen by taking $\varphi$ to be a finite quasiconvex function satisfying \eqref{growthp} that is not $W^{1,p}$ quasiconvex (see  \cite{j26} with, for example, $m=n=3,\, p=2$).
\end{rem}
\begin{rem}\rm
\label{localqc}
The same proof shows that if $\varphi:M^{m\times n}\to\R\cup\{+\infty\}$ is  a lower semicontinuous function with ${\rm dom}\,\varphi$ bounded, and if $A\in M^{m\times n}$, then there exists a nondecreasing sequence of continuous quasiconvex functions $\varphi^{(j)}:M^{m\times n}\to\R$ with $\varphi^{(j)}(A)\to \varphi(A)$ if and only if $\varphi$
\be 
\lbl{qcF}
\langle \mu,\varphi\rangle\geq \varphi(A)
\ee 
for all homogeneous gradient Young measures $\mu$ with $\bar\mu=A$.
\end{rem}

\subsection{Quasiconvexification of sets}
A closed set $G\subset M^{m\times n}$ is {\it quasiconvex} if $G=\varphi^{-1}(0)$ for some nonnegative finite quasiconvex function $\varphi$. Given $H\subset M^{m\times n}$ we can thus define the {\it quasiconvexification} $H^{\rm qc}$ of $H$ by
$$H^{\rm qc}=\bigcap \{ G\supset H: G \mbox{ quasiconvex} \}.$$
We recall the following equivalent characterizations of $K^{\rm qc}$ for compact $K\subset M^{m\times n}$:
\begin{prop}
\label{qcequiva}
If $K\subset M^{m\times n}$ is compact then 
\begin{eqnarray*}
 K^{\rm qc}&=& \{\bar\nu:\nu\; \mbox{\rm  a homogeneous gradient Young measure with }\supp\nu\subset K\}\\
&=&\{A\in M^{m\times n}:\varphi(A)\leq \max_{B\in K}\varphi(B)\;\mbox{\rm  for all finite quasiconvex }\varphi\}\\
&=& (\Dist_K^{\rm qc})^{-1}(0),
\end{eqnarray*}
where $\Dist_K$ is the distance function to the set $K$.
\end{prop}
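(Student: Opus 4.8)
Write $\mathcal Y_K:=\{\bar\nu:\nu\text{ a homogeneous gradient Young measure with }\supp\nu\subset K\}$, $\mathcal Q_K:=\{A\in M^{m\times n}:\varphi(A)\leq\max_{B\in K}\varphi(B)\text{ for all finite quasiconvex }\varphi\}$ and $Z_K:=(\Dist_K^{\rm qc})^{-1}(0)$; these characterizations are essentially standard (Kinderlehrer \& Pedregal, Dacorogna), and I would organize the proof as the cycle of inclusions
$$\mathcal Y_K\subseteq K^{\rm qc}\subseteq\mathcal Q_K\subseteq Z_K\subseteq\mathcal Y_K,$$
three of whose links are routine verifications and the last of which carries all the content.

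\emph{The three easy links.} For $\mathcal Y_K\subseteq K^{\rm qc}$: if $A=\bar\nu$ with $\supp\nu\subset K$ and $G=\varphi^{-1}(0)$ is any quasiconvex set containing $K$ (so $\varphi\geq0$ is finite quasiconvex with $\varphi|_K=0$), then $\nu$, being supported on the compact set $K$, is an $L^\infty$ gradient Young measure by Lemma \ref{zhanglemma}, so Jensen's inequality (Theorem \ref{kp}(ii) in the case $p=\infty$, specialized to a homogeneous measure) gives $0\leq\varphi(A)\leq\langle\nu,\varphi\rangle\leq\max_K\varphi=0$, whence $A\in G$; intersecting over all such $G$ yields $A\in K^{\rm qc}$. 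For $K^{\rm qc}\subseteq\mathcal Q_K$: given finite quasiconvex $\varphi$, set $c=\max_K\varphi$; then $\psi:=\max(\varphi,c)-c$ is nonnegative, finite and quasiconvex --- the last directly from Definition \ref{quasiconvexdef}, since for every matrix $B$ and every admissible $\theta$ one has $\av_G\max(\varphi,c)(B+D\theta)\,dx\geq\max\big(\av_G\varphi(B+D\theta)\,dx,\,c\big)\geq\max(\varphi(B),c)$ --- and $\psi^{-1}(0)=\{\varphi\leq c\}\supseteq K$, so $\{\varphi\leq c\}$ is a quasiconvex set containing $K$ and $A\in K^{\rm qc}$ forces $\varphi(A)\leq c$. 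For $\mathcal Q_K\subseteq Z_K$: the quasiconvex envelope $\Dist_K^{\rm qc}=\sup\{g\leq\Dist_K:g\text{ continuous quasiconvex}\}$ is nonnegative, finite (being $\leq\Dist_K$) and quasiconvex, hence a legitimate test function in the definition of $\mathcal Q_K$, and for $A\in\mathcal Q_K$ it gives $\Dist_K^{\rm qc}(A)\leq\max_K\Dist_K^{\rm qc}\leq\max_K\Dist_K=0$, i.e.\ $A\in Z_K$.

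\emph{The main link $Z_K\subseteq\mathcal Y_K$.} Let $A\in Z_K$, so $\Dist_K^{\rm qc}(A)=0$. By the representation of the quasiconvex envelope of a continuous function used in the proof of Lemma \ref{qcequiv} (Dacorogna), $\Dist_K^{\rm qc}(A)=\inf\{\av_Q\Dist_K(A+D\theta)\,dx:\theta\in W^{1,\infty}_0(Q;\R^m)\}$ with $Q=(0,1)^n$, so there exist $\theta^{(j)}\in W^{1,\infty}_0(Q;\R^m)$ with $\av_Q\Dist_K(A+D\theta^{(j)})\,dx\to0$, and hence $A+D\theta^{(j)}\to K$ in measure on $Q$ by Chebyshev's inequality. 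Since $K$ is bounded, a Lipschitz (Acerbi--Fusco) truncation at a level slightly above $|A|+\max_{B\in K}|B|$ produces $\tilde\theta^{(j)}\in W^{1,\infty}_0(Q;\R^m)$ with $\sup_j\|D\tilde\theta^{(j)}\|_{L^\infty}<\infty$, $\mL^n(\{D\tilde\theta^{(j)}\neq D\theta^{(j)}\})\to0$, and therefore still $A+D\tilde\theta^{(j)}\to K$ in measure, so $\av_Q\Dist_K(A+D\tilde\theta^{(j)})\,dx\to0$. Passing to a subsequence, $A+D\tilde\theta^{(j)}$ generates an $L^\infty$ gradient Young measure $(\nu_x)_{x\in Q}$; as $\Dist_K$ is bounded on the fixed ball containing all $A+D\tilde\theta^{(j)}$, $\int_Q\langle\nu_x,\Dist_K\rangle\,dx=\lim_j\int_Q\Dist_K(A+D\tilde\theta^{(j)})\,dx=0$, so $\langle\nu_x,\Dist_K\rangle=0$ and $\supp\nu_x\subset K$ for a.e.\ $x$, while $\bar\nu_x=A+Dw(x)$, $w\in W^{1,\infty}_0(Q;\R^m)$ being the weak$^*$ limit of a subsequence of $\tilde\theta^{(j)}$. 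By \cite[Theorem 3.1]{kinderlehrerpedregal94}, $\mu:=\av_Q\nu_x\,dx$ is then a homogeneous gradient Young measure with $\supp\mu\subset K$ and $\bar\mu=\av_Q(A+Dw)\,dx=A$, so $A\in\mathcal Y_K$ and the cycle closes.

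\textbf{Main obstacle.} The entire difficulty sits in $Z_K\subseteq\mathcal Y_K$, and within it in the truncation step: because $\Dist_K$ has only linear growth, a sequence nearly realizing the quasiconvex envelope at $A$ need not be equi-integrable, so one cannot directly extract a gradient Young measure concentrated on $K$ from it; passing to the uniformly Lipschitz $\tilde\theta^{(j)}$ near $K$ is precisely what makes the limit legitimate. The freedom to produce the limiting measure in $L^\infty$ rather than merely in $L^1$ is available only because $K$ is compact --- this is Zhang's lemma (Lemma \ref{zhanglemma}) --- which is also the reason the quasiconvex hull so characterized does not depend on the Sobolev exponent.
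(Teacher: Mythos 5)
Your proof is correct, but it is organized quite differently from the paper's. The paper disposes of the equality of the last three sets (the Young-measure hull, the set characterized by finite quasiconvex functions, and $(\Dist_K^{\rm qc})^{-1}(0)$) by citing M\"uller's lecture notes, and then only proves the two small inclusions needed to identify these with its own definition of $K^{\rm qc}$ as the intersection of quasiconvex sets containing $K$: namely $K^{\rm qc}\subset(\Dist_K^{\rm qc})^{-1}(0)$ because the latter is a quasiconvex set containing $K$, and $\{A:\varphi(A)\le\max_K\varphi\ \forall\varphi\ \text{finite qc}\}\subset K^{\rm qc}$ because such an $A$ lies in every quasiconvex set containing $K$. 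You instead give an essentially self-contained cyclic proof; your three ``easy'' links are sound (in particular your observation that $\max(\varphi,c)$ is quasiconvex, which makes sublevel sets of finite quasiconvex functions quasiconvex sets, correctly handles the direction $K^{\rm qc}\subseteq\mathcal Q_K$, complementary to the inclusion the paper proves), and your main link $Z_K\subseteq\mathcal Y_K$ reproves the cited M\"uller/Zhang result via the Dacorogna representation of $\Dist_K^{\rm qc}$ plus a Lipschitz truncation at a fixed level, which is exactly the right mechanism since the minimizing sequence is only bounded, not equi-integrable, in $L^1$. What each approach buys: the paper's proof is two lines at the cost of an external citation; yours makes the proposition independent of that citation at the cost of redoing the truncation analysis. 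One small point in your main link deserves a remark: the assertion that the truncated functions $\tilde\theta^{(j)}$ can be taken in $W^{1,\infty}_0(Q;\R^m)$ (so that the weak* limit $w$ has $\av_Q Dw\,dx=0$) needs a word of justification, since the standard maximal-function truncation does not automatically preserve zero boundary values; alternatively you can bypass it entirely by computing $\bar\mu=\lim_j\av_Q(A+D\tilde\theta^{(j)})\,dx=A$, using that on the exceptional set $S_j$ one has $|D\theta^{(j)}|\le\Dist_K(A+D\theta^{(j)})+|A|+\max_{B\in K}|B|$, whence $\int_{S_j}|D\theta^{(j)}|\,dx\to0$.
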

\begin{proof} The equality of the three sets in the proposition is proved in \cite[Theorem 4.10, p.~54]{muller99}. Since $(\Dist_K^{\rm qc})^{-1}(0)$ is quasiconvex and $\Dist_K^{\rm qc}(A)=0$ for all $A\in K$, we have that $K^{\rm qc}\subset (\Dist_K^{\rm qc})^{-1}(0)$. But if $\varphi(A)\leq \max_{B\in K}\varphi(B)$ for all finite quasiconvex $\varphi$ then $A$ belongs to any quasiconvex set $G\supset K$. Hence $K^{\rm qc}\subset \{A\in M^{m\times n}:\varphi(A)\leq \max_{B\in K}\varphi(B)\mbox{ for all finite quasiconvex }\varphi\}\subset K^{\rm qc}$, so that all three sets in the proposition equal $K^{\rm qc}$.
\end{proof}

\begin{thm}
\label{splitting}
Let $K_1,\ldots, K_N$ be compact subsets of $M^{m\times n}$ whose quasiconvexifications $K_r^{\rm qc}$ are disjoint. Let $\nu=(\nu_x)_{x\in\Omega}$ be an $L^{\infty}$ gradient Young measure such that $\supp\nu_x\subset\bigcup_{r=1}^NK_r^{\rm qc}$ for a.e. $x\in\Omega$. Then there is an $L^{\infty}$ gradient Young measure $\nu^*=(\nu^*_x)_{x\in\Omega}$ such that $\supp\nu^*_x\subset\bigcup_{r=1}^NK_r$, $\bar\nu^*_x=\bar\nu_x$ and $\nu^*_x(K_r)=\nu_x(K_r^{\rm qc}), r=1,\ldots,N,$ for a.e. $x\in\Omega$. If $\nu$ is homogeneous then $\nu^*$ can be chosen to be homogeneous.
\end{thm}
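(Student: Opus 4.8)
The plan is to treat the general (possibly non-homogeneous) case directly, building $\nu^*$ by replacing $\nu$, separately over each of the pairwise disjoint compact pieces $K_r^{\rm qc}$ of its support, by a \emph{superposition} of the homogeneous gradient Young measures supplied by Proposition~\ref{qcequiva}. For a.e.\ $x\in\Omega$ I would set $\lambda_r(x):=\nu_x(K_r^{\rm qc})$ and, when $\lambda_r(x)>0$, let $\nu_x^r$ be the normalised restriction of $\nu_x$ to $K_r^{\rm qc}$, so that $\nu_x=\sum_r\lambda_r(x)\nu_x^r$ with $\sum_r\lambda_r(x)=1$ and $\supp\nu_x^r\subset K_r^{\rm qc}$. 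By Proposition~\ref{qcequiva} (and Lemma~\ref{zhanglemma}, since supports are compact), for each $r$ and each $B\in K_r^{\rm qc}$ there is a homogeneous $L^\infty$ gradient Young measure supported on $K_r$ with barycentre $B$. Having fixed, once and for all, a measurable choice $B\mapsto\tau_B^r$ of such measures, I would put, for a.e.\ $x$,
$$\mu_x^r:=\int_{K_r^{\rm qc}}\tau_B^r\,d\nu_x^r(B),\qquad \nu_x^*:=\sum_r\lambda_r(x)\,\mu_x^r$$
(the measures $\mu_x^r$ being irrelevant, hence arbitrary, on the set where $\lambda_r(x)=0$). Each $\mu_x^r$ is then a probability measure supported on $K_r$, and $x\mapsto\mu_x^r$, hence $x\mapsto\nu_x^*$, is measurable because $x\mapsto\nu_x$ (and so $x\mapsto\lambda_r(x)$ and $x\mapsto\nu_x^r$) is measurable and $B\mapsto\tau_B^r$ is a fixed measurable map.

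Next I would check the Kinderlehrer--Pedregal conditions of Theorem~\ref{kp} for $\nu^*=(\nu_x^*)_{x\in\Omega}$. Interchanging integrals, $\bar\mu_x^r=\int_{K_r^{\rm qc}}\bar\tau_B^r\,d\nu_x^r(B)=\int_{K_r^{\rm qc}}B\,d\nu_x^r(B)=\bar\nu_x^r$, so $\bar\nu_x^*=\sum_r\lambda_r(x)\bar\nu_x^r=\bar\nu_x=Dy(x)$ a.e., giving condition~(i); also $\supp\nu_x^*\subset\bigcup_rK_r$ a.e., giving~(iii); and, the $K_r$ being disjoint, $\nu_x^*(K_r)=\lambda_r(x)\mu_x^r(K_r)=\lambda_r(x)=\nu_x(K_r^{\rm qc})$ a.e. For~(ii), fix a finite quasiconvex $\varphi$. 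Since $\tau_B^r$ is a homogeneous gradient Young measure with $\bar\tau_B^r=B$, Jensen's inequality gives $\langle\tau_B^r,\varphi\rangle\geq\varphi(B)$ for every $B\in K_r^{\rm qc}$, whence $\langle\mu_x^r,\varphi\rangle=\int_{K_r^{\rm qc}}\langle\tau_B^r,\varphi\rangle\,d\nu_x^r(B)\geq\int_{K_r^{\rm qc}}\varphi(B)\,d\nu_x^r(B)=\langle\nu_x^r,\varphi\rangle$, and therefore
$$\langle\nu_x^*,\varphi\rangle=\sum_r\lambda_r(x)\langle\mu_x^r,\varphi\rangle\ \geq\ \sum_r\lambda_r(x)\langle\nu_x^r,\varphi\rangle=\langle\nu_x,\varphi\rangle\ \geq\ \varphi(\bar\nu_x)=\varphi(\bar\nu_x^*)\quad\text{a.e.},$$
the last inequality because $\nu$ is a gradient Young measure. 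Thus Theorem~\ref{kp} shows $\nu^*$ is an $L^\infty$ gradient Young measure with the stated support, barycentre and mass properties. If $\nu$ is homogeneous, all quantities above are independent of $x$, so $\nu^*$ is homogeneous.

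The only delicate point is the measurability. For the selection $B\mapsto\tau_B^r$ one uses that the set of homogeneous $L^\infty$ gradient Young measures supported on the compact set $K_r$ is weak* closed among the probability measures on $K_r$ (by Theorem~\ref{kp}: each inequality $\langle\mu,\varphi\rangle\geq\varphi(\bar\mu)$ is weak* closed), and that the barycentre map is weak* continuous; hence $B\mapsto\{\mu:\supp\mu\subset K_r,\ \bar\mu=B,\ \mu\text{ a homogeneous GYM}\}$ has nonempty closed values and closed graph in the compact metric space $K_r^{\rm qc}\times\mathcal M_1(K_r)$, and a measurable selection exists by the Kuratowski--Ryll-Nardzewski theorem. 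One must also verify that $x\mapsto\int_{K_r^{\rm qc}}\tau_B^r\,d\nu_x^r(B)$ is a measurable family of measures, which is routine given the measurability of $x\mapsto\nu_x^r$ and of $B\mapsto\tau_B^r$. I expect this bookkeeping to be the main obstacle; the conceptual point is simply that one must replace each $\nu_x^r$ by a \emph{superposition} of the $\tau_B^r$ rather than by a single homogeneous GYM of barycentre $\bar\nu_x^r$ --- it is precisely the inequality $\langle\mu_x^r,\varphi\rangle\geq\langle\nu_x^r,\varphi\rangle$, valid for the superposition, that carries the Jensen inequality from $K_r^{\rm qc}$ down to $K_r$.
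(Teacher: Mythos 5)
Your proposal is correct and follows essentially the same route as the paper: the paper also constructs $\nu^*_x$ as the superposition $\langle\nu^*_x,f\rangle=\sum_r\int_{K_r^{\rm qc}}\langle\mu_A^r,f\rangle\,d\nu_x(A)$ using a Kuratowski--Ryll-Nardzewski measurable selection $A\mapsto\mu_A^r$ of homogeneous gradient Young measures on $K_r$ with barycentre $A$ (its Lemmas \ref{multifunction} and \ref{selection}), and then verifies the Kinderlehrer--Pedregal conditions by applying the quasiconvexity (Jensen) inequality twice, exactly as you do. Your splitting of $\nu_x$ into the normalised pieces $\lambda_r(x)\nu_x^r$ is only a cosmetic repackaging of the same formula, and your measurability discussion matches the paper's treatment.
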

In order to prove Theorem \ref{splitting} we will need two technical lemmas. Let $\mathcal P(M^{m\times n})$ denote the set of probability measures on $M^{m\times n}$. Given a compact set $K\subset M^{m\times n}$ we denote by ${\rm GYM}(K)$ the set of homogeneous ($L^{\infty}$) gradient Young measures $\mu$ with $\supp \mu\subset K$.
\begin{lemma}
\label{multifunction}
Let $K\subset M^{m\times n}$ be compact. For $A\in K^{\rm qc}$ define
$$F(A)=\{\mu\in  {\rm GYM}(K):  \bar\mu=A\}.$$
Then $F(A)$ is a nonempty, sequentially weak* closed subset of $\mathcal P(M^{m\times n})$.
\end{lemma}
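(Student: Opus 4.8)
Here is the plan.

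\textbf{Nonemptiness.} This is immediate from Proposition~\ref{qcequiva}: that proposition identifies $K^{\rm qc}$ with the set of barycentres $\bar\mu$ of homogeneous gradient Young measures $\mu$ with $\supp\mu\subset K$, i.e. with $\{\bar\mu:\mu\in{\rm GYM}(K)\}$. Hence for $A\in K^{\rm qc}$ there is at least one $\mu\in{\rm GYM}(K)$ with $\bar\mu=A$, so $F(A)\neq\emptyset$.

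\textbf{Sequential weak* closedness.} Let $\mu^{(k)}\in F(A)$ with $\mu^{(k)}\weakstar\mu$ in $\mathcal P(M^{m\times n})$. Since every $\mu^{(k)}$ is supported in the \emph{fixed} compact set $K$, no mass can escape and testing against nonnegative continuous functions vanishing on $K$ gives $\supp\mu\subset K$; in particular $\mu\in\mathcal P(M^{m\times n})$ with compact support. Likewise, because all the measures live on the compact set $K$, the barycentre passes to the limit, $\bar\mu=\lim_{k\to\infty}\bar\mu^{(k)}=A$.

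\textbf{The limit is a homogeneous gradient Young measure.} For this I would invoke the Kinderlehrer--Pedregal characterization, Theorem~\ref{kp}, in the case $p=\infty$: since $\bar\mu=A$ is the (constant) gradient of the affine map $x\mapsto Ax$ and $\supp\mu\subset K$ is compact, it suffices to verify $\langle\mu,\varphi\rangle\geq\varphi(\bar\mu)$ for every quasiconvex $\varphi:M^{m\times n}\to\R$. Fix such a $\varphi$; it is continuous, and applying the necessity part of the same characterization to each $\mu^{(k)}\in{\rm GYM}(K)$ yields $\langle\mu^{(k)},\varphi\rangle\geq\varphi(\bar\mu^{(k)})=\varphi(A)$. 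Picking $\chi\in C_c(M^{m\times n})$ with $\chi\equiv 1$ on a neighbourhood of $K$, we have $\langle\mu^{(k)},\varphi\rangle=\langle\mu^{(k)},\chi\varphi\rangle\to\langle\mu,\chi\varphi\rangle=\langle\mu,\varphi\rangle$, since $\chi\varphi\in C_c(M^{m\times n})$ and every measure in sight is supported in $K$. Passing to the limit in the inequality gives $\langle\mu,\varphi\rangle\geq\varphi(A)=\varphi(\bar\mu)$, so by Theorem~\ref{kp} (together with the trivial verifications of its conditions (i) and (iii) above) $\mu\in{\rm GYM}(K)$, and $\bar\mu=A$, i.e. $\mu\in F(A)$.

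\textbf{Main obstacle.} There is no real obstacle: the argument is routine once one decides to route it through the Jensen-type characterization of homogeneous gradient Young measures (Theorem~\ref{kp}) rather than through Proposition~\ref{qcequiva}, which only speaks about barycentres. The only points needing (minor) care are that the limit measure neither loses mass nor acquires support off $K$ — both guaranteed by the uniform support in the compact set $K$ — and the harmless truncation by $\chi$ needed to test the weak* convergence against a possibly unbounded quasiconvex $\varphi$.
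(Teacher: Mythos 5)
Your proposal is correct and follows essentially the same route as the paper: test the weak* limit against $C_0$ functions adapted to the compact set $K$ to preserve support, total mass and barycentre, verify the Jensen-type inequality $\langle\mu,\varphi\rangle\geq\varphi(A)$ for finite quasiconvex $\varphi$ by passing to the limit (using that all measures are supported in $K$), and conclude via the Kinderlehrer--Pedregal characterization (Theorem \ref{kp}); nonemptiness is, as you say, immediate from Proposition \ref{qcequiva}.
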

\begin{proof}
Let $\mu_j\in F(A)$ with $\mu_j\weakstar\mu$ (that is $\langle\mu_j,f\rangle\to\langle\mu,f\rangle$ for all $f\in C_0(M^{m\times n})$, where $C_0(M^{m\times n})$ denotes the space of all continuous functions $f:M^{m\times n}\to\R$ such that $\lim_{|A|\to\infty}f(A)=0$).  If $\psi\in C_0(M^{m\times n})$ with $\psi=0$ on $K$, then $\langle\mu,\psi\rangle=\lim_{j\to\infty}\langle\mu_j,\psi\rangle=0$, and so $\supp\mu\subset K$. Then, choosing $f\in C_0(M^{m\times n})$ with $f=1$ on $K$, and noting that $\langle\mu_j,f\rangle=1$ we have that $\langle\mu,f\rangle=\mu(K)=1$, and so $\mu\in\mathcal P(M^{m\times n})$. Let $h\in C_0(M^{m\times n})$ with $h(B)=B$ for all $B\in K$. Then $A=\bar\mu_j=\langle\mu_j,h\rangle$, so that $\lim_{j\to\infty}\langle\mu_j,h\rangle=\langle\mu,h\rangle=\bar\mu=A$. If $g$ is finite and quasiconvex, we have by Theorem  \ref{kp} that $\langle\mu_j,g\rangle\geq g(A)$ for all $j$, so that passing to the limit (using $\supp\mu_j\subset K$) we obtain $\langle\mu,g\rangle\geq g(A)$, so that, again using Theorem \ref{kp}, we have $\mu\in{\rm GYM}(K)$ as required.
\end{proof}
\begin{lemma}
\label{selection}
There is a Borel measurable map $A\mapsto \mu_A$ from $K^{\rm qc}$ to the set $\mathcal P(K)$ of probability measures on $K$ endowed with the weak* topology, such that $\mu_A\in F(A)$ for all $A\in K^{\rm qc}$.
\end{lemma}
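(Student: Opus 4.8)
The plan is to realise $A\mapsto\mu_A$ as a Borel measurable selection of the multifunction $F:K^{\rm qc}\to\mathcal P(K)$, $A\mapsto F(A)$ introduced in Lemma \ref{multifunction}, and to apply the Kuratowski--Ryll-Nardzewski selection theorem. Since $K$ is compact, $\mathcal P(K)$ with the weak* topology is a compact metric space, hence Polish, and $K^{\rm qc}$ is compact. By Lemma \ref{multifunction} every value $F(A)$ is nonempty, and, since $\supp\mu\subset K$ for $\mu\in F(A)$, we have $F(A)\subset\mathcal P(K)$. It therefore remains to show that $F$ has closed values and is weakly measurable, i.e. that $\{A\in K^{\rm qc}:F(A)\cap U\neq\emptyset\}$ is Borel for every relatively open $U\subset\mathcal P(K)$.

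First I would prove that the graph $\mathrm{Gr}(F)=\{(A,\mu)\in K^{\rm qc}\times\mathcal P(K):\bar\mu=A\text{ and }\mu\in{\rm GYM}(K)\}$ is closed. The barycentre map $\mu\mapsto\bar\mu$ is weak* continuous on $\mathcal P(K)$ (it is integration against the bounded continuous function $B\mapsto B$ on $K$), so $\{(A,\mu):\bar\mu=A\}$ is closed. By Theorem \ref{kp} in the case $p=\infty$, a measure $\mu\in\mathcal P(K)$ belongs to ${\rm GYM}(K)$ if and only if $\langle\mu,\varphi\rangle\geq\varphi(\bar\mu)$ for every finite (hence continuous) quasiconvex $\varphi:M^{m\times n}\to\R$; for each such $\varphi$ the map $\mu\mapsto\langle\mu,\varphi\rangle-\varphi(\bar\mu)$ is weak* continuous on $\mathcal P(K)$, so ${\rm GYM}(K)$ is an intersection of weak* closed sets and is closed. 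Hence $\mathrm{Gr}(F)$ is closed, and in particular each $F(A)$, being a section of $\mathrm{Gr}(F)$, is closed.

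Next I would deduce weak measurability from the compactness of $\mathcal P(K)$. Given a relatively open $U\subset\mathcal P(K)$, write $U=\bigcup_{k\geq1}C_k$ with $C_k$ closed and increasing (e.g.\ $C_k=\{\mu:\rho(\mu,\mathcal P(K)\setminus U)\geq1/k\}$ for a metric $\rho$ inducing the topology). Then $\{A:F(A)\cap U\neq\emptyset\}=\bigcup_k\{A:F(A)\cap C_k\neq\emptyset\}$, and for each $k$ the set $\mathrm{Gr}(F)\cap(K^{\rm qc}\times C_k)$ is a closed subset of the compact space $K^{\rm qc}\times\mathcal P(K)$, hence compact; its projection onto $K^{\rm qc}$, which equals $\{A:F(A)\cap C_k\neq\emptyset\}$, is therefore compact and in particular Borel. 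Thus $\{A:F(A)\cap U\neq\emptyset\}$ is an $F_\sigma$, hence Borel, and $F$ is weakly measurable. The Kuratowski--Ryll-Nardzewski theorem then yields a Borel measurable map $A\mapsto\mu_A$ from $K^{\rm qc}$ into $\mathcal P(K)$ with $\mu_A\in F(A)$ for all $A\in K^{\rm qc}$, as required.

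The one delicate point is the measurability of $F$: projections of Borel subsets of a product need not be Borel in general, and the argument above circumvents this by using that $\mathcal P(K)$ is compact, so that projections of \emph{closed} sets are closed. The only input beyond soft topology is the Kinderlehrer--Pedregal characterization (Theorem \ref{kp}), which expresses membership of ${\rm GYM}(K)$ through inequalities involving the continuous test functions $\varphi$ and thereby makes $\mathrm{Gr}(F)$ closed. (Alternatively, since $\mathrm{Gr}(F)$ is a Borel --- indeed closed --- set with compact sections in a product of Polish spaces, one could instead invoke the von Neumann--Aumann selection theorem.)
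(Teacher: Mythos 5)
Your proof is correct and follows essentially the same route as the paper: both obtain the selection from the Kuratowski--Ryll-Nardzewski theorem applied to the multifunction $F$, after verifying that $F$ has nonempty closed values and that the sets $\{A\in K^{\rm qc}:F(A)\cap G\neq\emptyset\}$ are closed for closed $G\subset\mathcal P(K)$, which yields weak measurability. The only difference is cosmetic: you get this closedness from a closed-graph/compact-projection argument and spell out the passage from upper semicontinuity to weak measurability yourself, whereas the paper argues sequentially (as in Lemma \ref{multifunction}) and cites Wagner for that implication.
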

\begin{proof}
By Parthasarathy \cite[Theorems 6.3 6.4, 6.5 pp.~44-46]{Parthasarathy} $\mathcal P(K)$ endowed with the weak* topology is a Polish space, i.e.   separable and completely metrizable. We first claim that the multivalued map $F:K^{\rm qc}\to \mathcal P(K)$ is upper semicontinuous, i.e. for every closed  $G\subset \mathcal P(K)$ the set $\{A\in K^{qc}:F(A)\cap G\neq\emptyset\}$ is closed in $M^{m\times n}$. Indeed if $A_j\in K^{qc}$ with $\mu_{A_j}\in F(A_j)\cap G$ and $A_j\to A$ then we may assume that $\mu_{A_j}\weakstar \mu$ (since $\mu_{A_j}$ is bounded in the dual space of $C_0(M^{m\times n})$, namely the space of measures). By a similar argument to that of the proof of Lemma \ref{multifunction} we deduce that $\mu\in F(A)\cap G$ as required. 

We now apply the measurable selection theorem of Kuratowski \& Ryll-Nardzewski \cite{Kuratowski}, which in the statement by Wagner \cite[Theorem 4.1]{wagner} implies that a Borel measurable selection $\mu_A\in F(A)$ exists whenever $F(A)$ is closed for all $A\in K^{\rm qc}$ and $A\mapsto F(A)$ is weakly measurable. In our case weak measurability means that $\{A\in K^{qc}:F(A)\cap U\neq\emptyset\}$ is Borel measurable, and it is shown in \cite[Theorem 4.2]{wagner} that this is implied by upper semicontinuity, giving the required result since $F(A)$ is closed by Lemma \ref{multifunction}. 
\end{proof} 
\begin{proof}{\it  of Theorem \ref{splitting}} Let $\mathcal K=\cup_{r=1}^NK_r$. We apply Lemma \ref{selection} to each compact set $K_r$, and denote the corresponding Borel measurable selection $\mu_A^r$, so that for each $r=1,\ldots,N$ and $A\in K_r^{\rm qc}$ we have $\mu_A^r\in {\rm GYM}(K_r)$ with $\bar\mu_A^r=A$. 
We then define the required gradient Young measure $\nu^*=(\nu^*_x)_{x\in\Omega}$ by the action of $\nu_x^*$ on functions $f\in C(\mathcal K)$ through the formula
\begin{equation}\label{splittingformula}
\langle\nu^*_x,f\rangle=\sum_{r=1}^N\langle\nu_x,\langle\mu_A^r,f\rangle\rangle,
\end{equation}
  that is
\begin{equation}\label{splittingformula1}
\langle\nu^*_x,f\rangle=\sum_{r=1}^N\int_{K_r^{\rm qc}}\int_{K_r}f(B)\,d\mu_A^r(B)\,d\nu_x(A).
\end{equation}
(Note that $\langle\nu^*_x,f\rangle$ is well defined because we can extend $f$ outside the compact set $\mathcal K$ to a function $f\in C_0(M^{m\times n})$ and $\supp\mu_A^r\subset K_r$.)
Since $\langle\nu^*_x,f\rangle \geq 0$ for $f\geq 0$, $\nu^*_x$ is a positive measure. Choosing   $f=1$  we see that $\int_{M^{m\times n}}d\nu_x(A)=\int_{M^{m\times n}}d\nu_x^*(A)=1$, so that $\nu_x^*\in \mathcal P(\mathcal K)$. Similarly, choosing   $f(A)=A$  we deduce that $\bar\nu_x^*=\sum_{r=1}^N\int_{K_r^{\rm qc}}A\,d\nu_x(A)=\bar\nu_x$. In particular $\bar\nu_x^*=Dy(x)$ for some $Dy\in L^{\infty}(\Omega;M^{m\times n})$. If $\varphi$ is finite and quasiconvex, then 
\begin{eqnarray*}
\langle\nu^*_x,\varphi\rangle&\geq&\sum_{r=1}^N\int_{K_r^{\rm qc}}\varphi(\bar\mu_A^r)\,d\nu_x(A)\\ &=&\sum_{r=1}^N\int_{K_r^{\rm qc}}\varphi(A)\,d\nu_x(A)\\
&=&\int_{M^{m\times n}}\varphi(A)\,d\nu_x(A)\geq\varphi(\bar\nu_x), 
\end{eqnarray*}
where we have used the necessity of condition (ii) of Theorem \ref{kp} twice. By construction $\supp \nu^*_x\subset\mathcal K$.  Hence, by the sufficiency part of Theorem \ref{kp}, $\nu^*$ is an $L^{\infty}$ gradient Young measure, which is homogeneous if $\nu$ is homogeneous. Finally, choosing $f$ to be the characteristic function of $K_s$ we see that 
$\nu^*_x(K_s)= \nu_x(K_s^{\rm qc})$ as required.
\end{proof}

\subsection{Domains connected with respect to rigid motion of a convex set}
\lbl{convex}
Let $n>1$.
We recall that two subsets $G_1,G_2$ of $\R^n$ are {\it directly congruent} if
\be
\lbl{c1}
G_1=\xi +QG_2\;\;\text{ for some } \xi\in\R^n, Q\in {\rm SO}(n).
\ee
 
Let $\om\subset\R^n$ be a bounded domain, and let $C\subset\R^n$ be bounded,
open and convex.  We suppose without loss of generality that $0\in C$; this implies in
particular that $\lambda C\subset C$ for any $\lambda\in[0,1]$.

We define the {\it outer radius} $R(C)$ by
\be
\lbl{c3a}
R(C)=\inf\{\rho>0:B(a,\rho)\supset C \text{ for some }a\in \R^n\},
\ee
the {\it inner radius} $r(C)$ by
\be
\lbl{c3b}
r(C)=\sup\{\rho>0:B(a,\rho)\subset C \text{ for some }a\in \R^n\},
\ee
and the {\it eccentricity} $E(C)$ by
\be
\lbl{c3c}
E(C)=\sqrt{1-\frac{r(C)^2}{R(C)^2}}.
\ee
Note that there exists a unique minimal ball $B(a(C),R(C))$ containing $C$, but that there may be
infinitely many maximal balls $B(b(C),r(C))$ contained in $C$.
\begin{defn}
\lbl{c4}
$\om$ is {\it $C$-filled} if any $x\in\om$ belongs to a subset of $\om$ that is directly
congruent to $C$.
\end{defn}
Thus $\om$ is $C$-filled if and only if
\be
\lbl{c5}
\om=\bigcup\{G\subset\om:G \text{ directly congruent to }C\}.
\ee
\begin{defn}
\lbl{c5a}
Let $C_1,C_2$ be subsets of $\om$ directly congruent to $C$. We say that $C_1,C_2$ are
{\it congruently connected}, written $C_1\sim C_2$, if $C_1$ can be  moved continuously to $C_2$
as a rigid body while remaining in $\om$, i.e. there exist continuous maps
$\xi:[0,1]\arr\R^n$, $Q:[0,1]\arr SO(n)$, such that $C_1=\xi(0)+Q(0)C,\;C_2=\xi(1)
+Q(1)C$ and $\xi(t)+Q(t)C\subset\om$
for all $t\in[0,1]$.
\end{defn}
Clearly $\sim$ is an equivalence relation on the family $\mK(C)$ of subsets
of $\om$ that are directly congruent to $C$.
 
\begin{defn}
\lbl{c6}
$\om$ is {\it $C$-connected} if there is an equivalence class of $\mK(C)$
with respect to $\sim$ that covers $\om$. $\om$ is {\it strongly $C$-connected} if
it is $C$-filled and every pair of subsets of $\om$ directly congruent to $C$ are congruently
connected.
\end{defn}
Thus $\om$ is $C$-connected if $\om$ is covered by a collection of directly congruent
copies of $C$ any pair of which can be moved from one to the other as a rigid
body while remaining in $\om$, while $\om$ is strongly $C$-connected if in addition
there is a single equivalence class with respect to $\sim$.
 Example \ref{ex2} below shows that $C$-connectedness does not imply strong
 $C$-connectedness.

  \begin{prop}
  \lbl{c6a}
  Let $0<\lambda\leq 1$, and let $\om$ be convex. Then the subsets of $\om$ of the form
  $a+\lambda\om$, $a\in\R^n$, cover $\om$ and are pairwise congruently connected. In
  particular $\om$ is strongly $\lambda\om$-connected.
  \end{prop}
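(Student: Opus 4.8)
The plan is to prove the two displayed assertions by elementary convexity and then to deduce strong $\lambda\om$-connectedness; the only substantial point will be that an arbitrary subset of $\om$ directly congruent to $\lambda\om$ can be joined to a translate of $\lambda\om$. For the covering, given $x\in\om$ put $a=(1-\lambda)x$; then $x=a+\lambda x\in a+\lambda\om$, while $a+\lambda\om\subset\om$ because every point $(1-\lambda)x+\lambda w$ with $w\in\om$ is a convex combination of $x,w\in\om$. Since $a+\lambda\om$ is directly congruent to $\lambda\om$ (with rotation $\1$), this shows at once that the translates $a+\lambda\om$ lying in $\om$ cover $\om$ and that $\om$ is $\lambda\om$-filled (Definition \ref{c4}). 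For congruent connectedness of two such translates $C_1=a_1+\lambda\om$, $C_2=a_2+\lambda\om$, I would take $Q(t)\equiv\1$ and $\xi(t)=(1-t)a_1+ta_2$: for each $t\in[0,1]$ and $w\in\om$ the point $\xi(t)+\lambda w=(1-t)(a_1+\lambda w)+t(a_2+\lambda w)$ is a convex combination of a point of $C_1$ and a point of $C_2$, hence lies in $\om$, so $\xi(t)+\lambda\om\subset\om$ and $C_1\sim C_2$; the same computation with a fixed rotation $Q$ shows that any two positions $\xi_1+Q\lambda\om$, $\xi_2+Q\lambda\om$ lying in $\om$ are congruently connected. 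Thus the translates lying in $\om$ form a single $\sim$-equivalence class of $\mK(\lambda\om)$ covering $\om$, which already proves that $\om$ is $\lambda\om$-connected.

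For strong $\lambda\om$-connectedness it then remains, by transitivity of $\sim$, to connect an arbitrary $G=\xi+Q\lambda\om\subset\om$ to a translate. If $\lambda=1$ this is immediate, since then $G$ has the same volume as $\om$, forcing $G=\om$. Assume $\lambda<1$. I would use two facts. First, since the distance-to-boundary function is concave on the convex set $\om$, one has $\dist\big((1-\lambda)p+\lambda\om,\,\partial\om\big)\ge(1-\lambda)\dist(p,\partial\om)$ for every $p\in\om$; hence the translate $(1-\lambda)p+\lambda\om\subset\om$ has clearance (distance to $\partial\om$) at least $\delta_0:=(1-\lambda)r(\om)>0$ when $B(p,r(\om))\subset\om$. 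Second, a copy lying in $\om$ with clearance at least $c>0$ may be rotated about one of its own points by any $\tilde Q\in SO(n)$ with $\lambda\,\diam(\om)\,\|\tilde Q-\1\|<c$ without leaving $\om$, because then each of its points moves by less than $c$. Combining these, one can move $G$ inside $\om$ by alternately translating it to a configuration of positive clearance and rotating it a little, thereby driving the rotation matrix along a path in $SO(n)$ towards an element $S$ of the symmetry group $\{S\in SO(n):S\om=\om\}$; since $\xi'+S\lambda\om=\xi'+\lambda\om$ is a translate, this would complete the proof.

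The main obstacle is precisely this last step. Although $SO(n)$ is connected, the set of \emph{feasible} rotations $\mathcal Q=\{Q\in SO(n):\xi+Q\lambda\om\subset\om\text{ for some }\xi\in\R^n\}$ need not be all of $SO(n)$, and the clearance available for rotating collapses for those $Q$ for which $Q\lambda\om$, in every admissible position, nearly fills $\om$ in some direction. What has to be shown is that $\mathcal Q$, modulo the symmetry group of $\om$, is connected, with the identity coset lying in the component of every feasible rotation --- equivalently, that no subset of $\om$ directly congruent to $\lambda\om$ is ``stuck''. This is the step in which the convexity of $\om$ must be used decisively (for a non-convex domain such as Fraenkel's rooms-and-passages domain \cite{fraenkel79} the analogous conclusion is false), and I would approach it by a continuity argument tracking how the family of admissible positions of $Q\lambda\om$ varies with $Q$, and in particular how this family can split as $Q$ passes through the ``tightest'' feasible rotations.
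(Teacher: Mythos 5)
Your first paragraph is, in substance, exactly the paper's proof, and it is correct: the covering via $a=(1-\lambda)x$ (i.e.\ $\lambda(\om-x)\subset\om-x$) and the congruent connectedness of two translates via $\xi(t)=(1-t)a_1+ta_2$, $Q(t)\equiv\1$, is all the paper writes down. Your further remark that the same interpolation connects two copies carrying one and the same fixed rotation $Q$ is also correct, and is precisely the form in which Proposition \ref{c6a} is later invoked, inside the convex set $a+QC$, in the proof of Proposition \ref{c8}.

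Where you part company with the paper is in your last two paragraphs. The paper offers no argument joining a rotated copy $\xi+Q\lambda\om\subset\om$ to a translate: its proof stops with the translate case, and the clause ``in particular $\om$ is strongly $\lambda\om$-connected'' is read off from the displayed assertions. So the clearance/small-rotation scheme you sketch has no counterpart in the paper, and the obstacle you honestly flag (connectedness of the feasible rotations modulo the symmetry group of $\om$) is not something you could have overcome by working harder: read literally against Definition \ref{c6}, the step you are stuck on can actually fail for a convex $\om$ and suitable $\lambda$. For instance, let $\om\subset\R^2$ be the unit square with one vertex pushed slightly outward, so that $\om$ is convex with trivial rotational symmetry group, and for each angle $\theta$ let $f(\theta)$ denote the largest $\mu$ such that some translate of $\mu$ times the copy of $\om$ rotated by $\theta$ lies in $\om$. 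Then $f(\pi/2)$ is close to $1$ while $f(\pi/4)$ and $f(3\pi/4)$ are close to $1/\sqrt{2}$, so for, say, $\lambda=0.9$ there is a copy of $\lambda\om$ in $\om$ at orientation $\pi/2$; along any rigid motion keeping the copy in $\om$ its orientation varies continuously and must stay where $f\geq\lambda$, hence in the arc $(\pi/4,3\pi/4)$, and since a rotation carrying $\om$ to a translate of itself would be a rotational symmetry about its centroid, such a copy can never coincide with a translate $a+\lambda\om$. So confine your argument to your first paragraph: it is the paper's proof, it establishes the two displayed assertions (hence $\lambda\om$-connectedness, which is all that Proposition \ref{c8} and Theorem \ref{transest} use), and the rotational step should not be attempted; the ``strongly'' clause of the statement should be understood as referring to what the displayed assertions give, namely the family of translated copies $a+\lambda\om$.
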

  \begin{proof}
  Let $x\in\om$. Since $\om$ is convex, $\lambda(\om-x)\subset\om-x$, and hence
  $x\in(1-\lambda)x+\lambda\om\subset\om$. Thus the subsets of $\om$ of the form
  $a+\lambda\om$ cover $\om$.

  If $a_1+\lambda\om$ and $a_2+\lambda\om$ are two such subsets then $t\mapsto
  (1-t)a_1+ta_2+\lambda\om$, $t\in[0,1]$, defines a suitable continuous path of directly 
  congruent subsets of $\om$
  joining them.
  \end{proof}
 
 If $\om$ is $C$-connected then obviously $\om$ is $C$-filled.
The following example shows that if $\om$ is
 $C$-filled then it need not be $C$-connected.
 \begin{example}
\lbl{c7}For $0<\alpha<1$ define $\om^\alpha\subset\R^n$ by
$$\om^\alpha=B(0,1)\cup B((2-\alpha)e_1,1).$$
 Then $\om^\alpha$ is $B(0,1)$-filled but is
only $B(0,r)$-connected for $0<r\leq
r_\alpha=\sqrt{\alpha-\frac{1}{4}\alpha^2}$, since the diameter of the opening
joining the two balls comprising $\om^\alpha$ is $2r_\alpha$.
\end{example}

\begin{prop}
\lbl{c8}
If $\om$ is $C$-filled, it is $\lambda C$-connected for all sufficiently small
$\lambda>0$.
\end{prop}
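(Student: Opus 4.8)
The plan is to exhibit $\lambda_0>0$ such that, for every $\lambda\in(0,\lambda_0)$, the $\sim$-equivalence class of a single, suitably chosen copy of $\lambda C$ in $\om$ already covers $\om$. Everything hinges on a \emph{local lemma}: if $\lambda<r(C)/R(C)$ and $G\subset\om$ is directly congruent to $C$, then all subsets of $\om$ directly congruent to $\lambda C$ and contained in $G$ lie in one $\sim$-class and, by Proposition~\ref{c6a} applied to the convex set $G$, cover $G$. For the first assertion, note $G$ is open and convex with $r(G)=r(C)$, $R(G)=R(C)$. Two copies $D=\xi+\lambda QC$ and $D'=\xi'+\lambda QC$ in $G$ with the same orientation $Q$ are joined inside $G$ by $t\mapsto\bigl((1-t)\xi+t\xi'\bigr)+\lambda QC$, since each of its points $(1-t)(\xi+\lambda Qc)+t(\xi'+\lambda Qc)$ is a convex combination of a point of $D\subset G$ and a point of $D'\subset G$. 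Also, since $\lambda R(C)<r(C)$, fix $b$ with $B(b,\lambda R(C))\subset G$; for each $R\in SO(n)$ the copy $(b-\lambda R\,a(C))+\lambda RC$ of $\lambda C$ lies in its circumball $B(b,\lambda R(C))\subset G$, and as $SO(n)$ is path-connected all of these are joined inside $G$. Composing a translation, a rotation, and a translation then joins any two copies of $\lambda C$ in $G$.

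Now fix such a $\lambda$, a copy $G_0$ of $C$ in $\om$, a copy $D_0\subset G_0$ of $\lambda C$, and let $\mathcal E$ be the $\sim$-class of $D_0$. Call a copy $G$ of $C$ in $\om$ \emph{good} if some copy of $\lambda C$ contained in $G$ lies in $\mathcal E$; by the local lemma this then holds for every copy of $\lambda C$ in $G$, and $G\subset\bigcup_{D\in\mathcal E}D$. Put $T=\bigcup\{G:G\ \text{good}\}$. Then $T$ is open, nonempty ($G_0$ is good), and contained in $\bigcup_{D\in\mathcal E}D$; since $\om$ is connected it suffices to show that $T$ is relatively closed in $\om$, for then $T=\om$ and hence $\bigcup_{D\in\mathcal E}D=\om$, i.e. $\om$ is $\lambda C$-connected. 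The local lemma also gives a gluing principle: if $G,G'$ are copies of $C$ in $\om$ and $G\cap G'$ contains a ball of radius $\ge\lambda R(C)$, then $G\cap G'$ contains a copy of $\lambda C$, which links the copies of $\lambda C$ in $G$ to those in $G'$, so $G$ is good if and only if $G'$ is; hence goodness propagates along any chain of copies of $C$ in $\om$ whose consecutive overlaps each contain a ball of radius $\ge\lambda R(C)$.

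It remains to treat $x\in\om\cap\overline T$: since $\om$ is $C$-filled, $x$ lies in a copy $G_x$ of $C$, which, being open, meets $T$, so $G_x\cap G'\ne\emptyset$ for some good $G'$, and one must connect $G_x$ to $G'$ by a chain with overlaps as above. This is where the hypothesis that $\om$ is $C$-filled does the essential work, by excluding arbitrarily thin necks: every point of $\om$, however close to $\partial\om$, lies in a full-size copy of $C$, hence within distance $\diam C$ of a point at distance nearly $r(C)$ from $\partial\om$. The plan is to deduce that for all sufficiently small $\lambda$ (depending on $\om$ and $C$) the eroded domain $\Omega_\lambda=\{y\in\om:\dist(y,\partial\om)>\lambda\diam C\}$ is connected --- exactly the property that fails for the ``rooms and passages'' domain, which is not $C$-filled --- and then to obtain a chain with fat overlaps by covering a path (into $\Omega_\lambda$, and near $\partial\om$ through the full copies of $C$ that surround every shell point) by finitely many copies of $C$ which, along a fine enough subdivision, may be taken to be translates of one fixed rotated copy of $C$, so that consecutive copies overlap in balls of radius $>\lambda R(C)$ once $\lambda<r(C)/R(C)$ and the subdivision is fine enough. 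This makes $G_x$ good, so $x\in T$ and $T$ is closed. The main obstacle is precisely this last step --- equivalently, the connectedness of $\Omega_\lambda$ for small $\lambda$ --- because the chain joining two \emph{prescribed} copies of $C$ has overlaps with no positive lower bound across $\om$, so producing a single $\lambda_0$ that works throughout $\om$ genuinely requires the $C$-filled hypothesis; the remainder of the argument is just the local lemma and Proposition~\ref{c6a}.
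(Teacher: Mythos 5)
Your local lemma (for $\lambda R(C)<r(C)$, all copies of $\lambda C$ contained in a fixed full-size copy $G$ of $C$ are mutually congruently connected inside $G$ and cover $G$) is correct, and the open/closed ``goodness'' framework built on it is sound as far as it goes. But the argument stops exactly where Proposition \ref{c8} becomes nontrivial: to show $T$ is relatively closed in $\om$ you must join the copy $G_x$ to a good copy $G'$ by a chain of full-size copies of $C$ whose consecutive intersections contain balls of radius $\geq\lambda R(C)$, and for this you offer only a ``plan'' --- connectedness of the eroded set $\{y\in\om:\dist(y,\partial\om)>\lambda\diam C\}$ for small $\lambda$, followed by covering a path by translates of one fixed rotated copy of $C$ --- which you yourself flag as the main obstacle. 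Neither ingredient is proved, and the second is not correct as stated: near $\partial\om$ a point may be contained only in copies of $C$ with particular orientations, so the boundary-adjacent part of a chain cannot in general be made of translates of a single rotated copy. The erosion-connectedness claim is likewise unestablished, and it is not even the statement you actually need. So there is a genuine gap, and it sits precisely at the heart of the proposition (uniform transfer of connectivity across $\om$, where overlaps of prescribed copies have no positive lower bound).

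For comparison, the paper supplies exactly this missing uniform global step with the $\delta$-tube lemma (Lemma \ref{tube}): fixing $\ep>0$ with $B(0,\ep)\subset C$, a compactness/contradiction argument --- using only that $\om$ is a bounded connected open set, not the $C$-filled hypothesis --- produces a single $\delta=\delta(\ep)>0$ such that any two points at distance $\geq\ep$ from $\partial\om$ are joined by a path staying $\delta$-deep in $\om$. The $C$-filled hypothesis enters only to guarantee that every point of $\om$ lies in a copy $a+QC\subset\om$ whose anchor $a$ satisfies $\dist(a,\partial\om)\geq\ep$. Choosing $\lambda$ so small that $\lambda C\subset B(0,\delta)$, each small copy $b+\lambda QC\subset a+QC$ is first connected to the anchored copy $a+\lambda QC\subset B(a,\delta)$ by Proposition \ref{c6a} (as in your local lemma), and two anchored copies are then simultaneously translated along the tube and rotated, since every rotate of $\lambda C$ fits inside $B(0,\delta)$ and hence inside the tube. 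Note that this is strictly weaker than connectedness of an eroded domain: the tube need only stay $\delta$-deep, with $\delta$ typically much smaller than $\ep$, which is why no erosion-connectedness statement ever has to be proved. To complete your proof you would need to establish a quantitative statement of exactly this kind; without it, the closedness of $T$ (equivalently, the existence of fat-overlap chains with a single $\lambda_0$ valid throughout $\om$) remains an unproved assertion.
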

To prove Proposition \ref{c8} we need the following definition and lemma.
\begin{defn}
\lbl{deltatube}
If $\delta>0$, a $\delta${\it -tube joining} $x_1, x_2\in\om$ is a continuous path $\xi:[0,1]\arr\om$ with
$\xi(0)=x_1, \xi(1)=x_2$ such that $\xi(t)+\overline{B(0,\delta)}\subset\om$ for all $t\in[0,1]$.
\end{defn}
\begin{lem}
\lbl{tube}
Let $\om$ be a bounded domain and let $\ep>0$ be sufficiently small. Then there exists
$\delta=\delta(\ep)>0$ such that any pair of points
$x_1,x_2\in\om$ with $\dist(x_i,\partial\om)\geq\ep$ are joined by a $\delta$-tube.
\end{lem}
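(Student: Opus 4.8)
The plan is to exploit compactness of the closed set $\om_\ep:=\{x\in\om:\dist(x,\partial\om)\geq\ep\}$ in order to replace it by a \emph{finite} collection of points of $\om$, and then to join those points using path-connectedness of $\om$ --- recall that a bounded domain, being open and connected, is polygonally connected. (For $\ep$ small $\om_\ep\neq\emptyset$, since $\om$ contains a ball; if $\om_\ep=\emptyset$ the assertion is vacuous for any $\delta>0$, so assume $\om_\ep\neq\emptyset$.)

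First I would note that $\{B(z,\ep/3):z\in\om_\ep\}$ is an open cover of the compact set $\om_\ep$, so there are finitely many points $z_1,\dots,z_N\in\om_\ep$ with $\om_\ep\subset\bigcup_{i=1}^N B(z_i,\ep/3)$. From the elementary bound $\dist(y,\partial\om)\geq\dist(z,\partial\om)-|y-z|$ it follows that whenever $z\in\om_\ep$ and $|y-z|<\ep/3$ one has $\dist(y,\partial\om)>\tfrac{2}{3}\ep$; in particular, for every $x\in B(z_i,\ep/3)$ the whole segment $[x,z_i]$ lies in $\om$ at distance $>\tfrac{2}{3}\ep$ from $\partial\om$.

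Next, since $\om$ is path-connected, for each pair $i,j$ I would fix a continuous path $\gamma_{ij}:[0,1]\to\om$ with $\gamma_{ij}(0)=z_i$ and $\gamma_{ij}(1)=z_j$. Each image $\gamma_{ij}([0,1])$ is compact and contained in the open set $\om$, so $\dist(\gamma_{ij}([0,1]),\partial\om)>0$; put $\delta_0:=\min_{1\leq i,j\leq N}\dist(\gamma_{ij}([0,1]),\partial\om)>0$, a minimum of finitely many positive numbers. I then claim $\delta=\delta(\ep):=\tfrac12\min(\tfrac{2}{3}\ep,\delta_0)$ works. Indeed, given $x_1,x_2\in\om$ with $\dist(x_i,\partial\om)\geq\ep$, choose $i,j$ with $x_1\in B(z_i,\ep/3)$ and $x_2\in B(z_j,\ep/3)$, and let $\xi:[0,1]\to\om$ be the concatenation (suitably reparametrised) of the segment $[x_1,z_i]$, the path $\gamma_{ij}$, and the segment $[z_j,x_2]$. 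Then $\xi(0)=x_1$, $\xi(1)=x_2$, and by the estimates above $\dist(\xi(t),\partial\om)\geq\min(\tfrac{2}{3}\ep,\delta_0)>\delta$ for all $t\in[0,1]$, which is exactly the requirement $\xi(t)+\overline{B(0,\delta)}\subset\om$. Hence $\xi$ is a $\delta$-tube joining $x_1$ and $x_2$.

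The argument is routine and I do not expect a genuine obstacle; the one point deserving a moment's care is that one must not try to keep the tube inside $\om_\ep$ itself, which can easily fail to be connected even though $\om$ is a domain (cf.\ the dumbbell of Example~\ref{c7}). Instead one allows the tube to dip to distance $\delta_0$ from $\partial\om$ while it traverses one of the finitely many fixed connecting paths, and this is precisely what forces the width $\delta$ to depend on $\ep$.
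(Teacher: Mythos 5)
Your proof is correct, but it executes the compactness differently from the paper. The paper fixes a base point $\bar x$ with $\dist(\bar x,\partial\om)\geq\ep$ and argues by contradiction: if no uniform $\delta$ worked, there would be points $x^{(j)}$ with $\dist(x^{(j)},\partial\om)\geq\ep$ not joinable to $\bar x$ by a $\frac{1}{j}$-tube; extracting a convergent subsequence $x^{(j)}\to x$ (sequential compactness of $\{x\in\om:\dist(x,\partial\om)\geq\ep\}$), using connectedness of $\om$ to obtain some $\delta$-tube from $\bar x$ to $x$, and appending the short segment from $x$ to $x^{(j)}$ gives a $\frac{1}{j}$-tube for large $j$, a contradiction. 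You instead realise the compactness through a finite subcover of $\om_\ep$ by $\ep/3$-balls together with a fixed finite family of connecting paths $\gamma_{ij}$, and take $\delta$ as half the minimum of finitely many positive distances. This makes the argument direct and, given the chosen cover and paths, produces an explicit value of $\delta$, whereas the paper's argument is shorter but nonconstructive; both rest on the same two ingredients, namely compactness of the set of points at distance at least $\ep$ from $\partial\om$ and the positive distance of a compact path image in $\om$ from $\partial\om$. The only point you pass over silently is the implication $\dist(\xi(t),\partial\om)>\delta\Rightarrow\xi(t)+\overline{B(0,\delta)}\subset\om$, which is immediate because a segment from $\xi(t)$ to a point outside $\om$ must meet $\partial\om$ at distance at most $\delta$ from $\xi(t)$; this is standard and not a gap.
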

\begin{proof}
Fix $\bar x \in\om$ with $\dist(\bar x,\partial\om)\geq\ep$. For $\delta>0$ let
$E_\delta=\{ x\in\om:$  there exists a $\delta$-tube joining $\bar x$   and  $x\}$. We claim that
$E_\delta\supset\{ x\in\om:\dist(x,\partial\om)\geq\ep\}$ for $\delta$ sufficiently small. If not there would
exist $x^{(j)}\in\om$ with $\dist(x^{(j)},\partial\om)\geq\ep$ such that there is no $\frac{1}{j}$-tube
joining $\bar x$ to $ x^{(j)}, j=1,2,\ldots$. But we may assume that $x^{(j)}\arr x$ with $\dist(x,\partial\om)
\geq \ep$. Since $\om$ is connected there is a $\delta$-tube joining $\bar x$ to $x$ for some $\delta>0$,
so that this path followed by the straight line from $x$ to $x^{(j)}$ defines a $\frac{1}{j}$-tube for large $j$,
a contradiction. Hence for $\delta$ sufficiently small any points $x_1, x_2\in\om$ with
 $\dist(x_i,\partial\om)\geq\ep$ are joined to $\bar x$, and hence to each other, by a $\delta$-tube.
\end{proof}

\begin{proof} {\it of Proposition \ref{c8}} 
Let $\ep>0$ be such that $B(0,\ep)\subset C$, and let $\delta=\delta(\ep)$
be as in Lemma \ref{tube}. Pick $\lambda>0$ sufficiently small so that $\lambda C\subset
B(0,\delta)$.

Let $\mE_\lambda(C)=\{ b+\lambda QC: b\in\R^n, Q\in SO(n),
b+\lambda QC\subset a +QC\subset\om$ for some $a\in\R^n\}$. Since $\om$ is $C$-filled,
$\mK(C)$ covers $\om$, and by Proposition \ref{c6a} applied to $a+QC$, so does
$\mE_\lambda(C)$.

Suppose that $b_i+\lambda Q_iC\in \mE_\lambda(C),\;i=1,2$. Then by
Proposition \ref{c6a}, $b_i+\lambda Q_iC$ is congruently connected to $a_i+\lambda
Q_iC$,  where $a_i+Q_iC\subset\om$, $i=1,2$. But $a_i+\lambda Q_iC\subset B(a_i,\delta)$
and $\dist(a_i,\partial\om)\geq\dist (a_i,\partial(a_i+Q_iC))\geq\ep$. Hence by
Lemma \ref{tube} there exists a $\delta$-tube  $\xi:[0,1]\arr\om$
 joining $a_1$ and $a_2$. Let $Q:[0,1]\arr SO(n)$ be continuous with $Q(0)=Q_1$,
 $Q(1)=Q_2$. Then $\xi(t)+\lambda Q(t)C\subset\om$ for all $t\in[0,1]$, and so
 $a_1+\lambda Q_1C$, $a_2+\lambda Q_2C$ are congruently connected. Hence $b_1+\lambda
 Q_1C$, $b_2+\lambda Q_2C$ are congruently connected. Hence $\om$ is
 $\lambda C$-connected.
\end{proof}

The following example shows that Proposition \ref{c8} does not hold for strong
$C$-connectedness. That is, a bounded domain may be $C$-filled but not strongly
$\lambda C$-connected for all sufficiently small $\lambda>0$.
\begin{example}
\lbl{ex2}
Let $C\subset\R^2$ be the interior of the equilateral triangle of side $1$ with
vertices at $(0,0),\;(\frac{\sqrt{3}}{2},{\pm\half})$. Let $\om$ consist of a large
ball $B(0,R)$ from which the origin $(0,0)$ and the points $A_i=(\frac{2^{-i}}{\sqrt{3}},
\frac{2^{-i}}{3}),\;B_i=(\frac{2^{-i}}{\sqrt{3}},-\frac{2^{-i}}{3}),\;i=0,1,2,..,$ are
removed. The points $A_i$, $B_i$ lie on the half-lines $L_A$ and $L_B$ defined by
$\{\sqrt{3}x_2-x_1=0,x_1\geq 0\}$ and $\{\sqrt{3}x_2+x_1=0,x_1\geq 0\}$ respectively,
which meet at the origin at an angle of $60^\circ$.
Then $\om$ is $C$-filled. Indeed $\om$ consists of $C$ together
with points lying outside $C$ which are clearly inside congruent copies of $C$
lying in $\om$ (for example, for the  points on $L_A,\,L_B$ we can use an equilateral
triangle of side $1$ which lies outside $C$ except for a small region near one of
its vertices).

Now consider the open equilateral triangle $\Delta$ of side $1$ with vertices at
$(\frac{2}{\sqrt{3}},0)$ and $(\frac{1}{2\sqrt{3}},\pm\half)$, and   the corresponding
scaled
equilateral triangles
$\Delta_i=2^{-i}\Delta$ of side $2^{-i}$. Note that $\Delta_i\subset\om$, and that the
edges of $\Delta_i$ intersect $L_A$ and $L_B$ in the points $A_i, A_{i+1}$ and
$B_i,B_{i+1}$ respectively. We
claim that $\Delta_i$ cannot be continuously moved to a position far from the origin
while remaining in $\om$. This is even true for a slightly smaller equilateral triangle
contained in $\Delta_i$. A rigorous proof can be constructed by noting that the width of
$\Delta_i$, that is the minimal distance between parallel lines that enclose $\Delta_i$,
is $2^{-(i+1)}\sqrt{3}$, which is greater than any of the distances of the openings
through which it would have to pass, namely $|A_iA_{i+1}|=|B_iB_{i+1}|=\frac{2^{-i}}{3}$ and
$|A_iB_i|=\frac{2^{1-i}}{3}$ (see Strang \cite{strang82}). Hence $\om$ is not strongly $\lambda
C$-connected for sufficiently small $\lambda>0$.
\end{example}

\begin{prop}
\lbl{cone}
The bounded domain $\om$ is $C$-connected for some bounded open convex $C$ if and
only if $\om$ satisfies the cone condition with respect to some cone $C^*$.
\end{prop}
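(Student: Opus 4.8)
The plan is to prove the two implications separately. The forward implication will use only the weaker fact that $\om$ is $C$-filled (which $C$-connectedness implies, as already noted in the text), and the reverse implication will be obtained from Proposition \ref{c8}.

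\emph{Cone condition from $C$-connectedness.} Suppose $\om$ is $C$-connected, hence $C$-filled. Since $C$ is bounded, open and convex it contains a ball $B(b,r)$ with $r=r(C)>0$; put $d_C=\diam C$, $\alpha^*=\arctan(r/d_C)\in(0,\tfrac{\pi}{2})$, $\rho^*=r/2$, and $C^*=\{z\in\R^n:|z|\le\rho^*,\ z\cdot e_1\ge|z|\cos\alpha^*\}$. The first step is the purely convex-geometric claim that for every $p\in C$ there is $Q_p\in SO(n)$ with $p+Q_pC^*\subset C$: indeed $\mathrm{conv}(\{p\}\cup B(b,r))\subset C$, and for $|p-b|\ge\rho^*$ this ``ice-cream cone'' contains the circular cone with vertex $p$, axis pointing from $p$ toward $b$, half-angle $\arctan(r/|p-b|)\ge\alpha^*$ and extending a distance $\ge\rho^*$ from $p$, hence contains a copy $p+Q_pC^*$ for a suitable $Q_p$; in the remaining range $|p-b|<\rho^*$ one uses instead that $B(p,\rho^*)\subset C$. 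Granting this, for $x\in\om$ the $C$-filled property gives $x\in\xi+QC\subset\om$, so $x=\xi+Qp$ for some $p\in C$, whence $x+(QQ_p)C^*=\xi+Q(p+Q_pC^*)\subset\xi+QC\subset\om$. Thus $\om$ satisfies the cone condition with the single fixed cone $C^*$.

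\emph{$C$-connectedness from the cone condition.} Suppose $\om$ satisfies the cone condition with $C^*$ of radius $\rho$ and half-angle $\alpha$. I would take $D=\{z\in\R^n:|z|<\rho/2,\ z\cdot e_1>|z|\cos(\alpha/2)\}$ --- a bounded open convex cone, a ``shorter and sharper'' copy of $C^*$ --- and show that $\om$ is $D$-filled; then Proposition \ref{c8} makes $\om$ $\lambda D$-connected for all sufficiently small $\lambda>0$, and $C=\lambda D$ completes the proof. To see $\om$ is $D$-filled, fix $x\in\om$ and set $d=\dist(x,\partial\om)$. If $d\ge 3\rho/4$ then $x-\tfrac{\rho}{4}e_1+D\subset B(x,3\rho/4)\subset\om$ and contains $x$. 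If $d<3\rho/4$, pick $Q_0\in SO(n)$ with $x+Q_0C^*\subset\om$, put $u_0=Q_0e_1$, and consider the congruent copy $G=(x-su_0)+Q_0D=x+Q_0(D-se_1)$ with $s=d/(2(M+1))$, where $M=(1+\cos\alpha)/(\cos(\alpha/2)-\cos\alpha)>0$. One checks that $x\in G$ (because $se_1\in D$) and that $G\subset B(x,d)\cup(x+Q_0C^*)\subset\om$: for $z\in D$ with $|z|<Ms$ one has $|z-se_1|<(M+1)s=d/2$, so $z-se_1\in B(0,d)$, while for $|z|\ge Ms$ the inequalities $(z-se_1)\cdot e_1\ge|z|\cos(\alpha/2)-s$, $\ |z-se_1|\le|z|+s$ and $|z|(\cos(\alpha/2)-\cos\alpha)\ge s(1+\cos\alpha)$ together force $z-se_1\in C^*$.

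The two convex-geometry verifications are routine; the one genuinely delicate point is the estimate in the reverse implication. The underlying difficulty --- the same one that makes $C$-filledness by \emph{balls} fail near a corner --- is that no open convex set containing $x$ can be fitted inside the cone $x+Q_0C^*$ itself, since $x$ lies on that cone's boundary. The remedy is to pull the vertex of the copy of $D$ back from $x$, along the cone axis, a distance $s$ proportional to $\dist(x,\partial\om)$: near its displaced vertex the copy is then thin enough to lie inside the small ball $B(x,\dist(x,\partial\om))\subset\om$, and it is precisely the angle deficit between $\alpha/2$ and $\alpha$ that guarantees that, as soon as the copy emerges from that ball, it has re-entered $x+Q_0C^*$. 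Finally, since the forward implication used only $C$-filledness, one obtains in passing that the cone condition, the existence of some $C$ for which $\om$ is $C$-connected, and the existence of some $C$ for which $\om$ is $C$-filled are all equivalent.
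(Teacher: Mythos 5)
Your proof is correct, and its skeleton is the same as the paper's: the forward implication uses only $C$-filledness together with the fact that a bounded open convex set itself satisfies a cone condition, and the reverse implication reduces to showing that $\om$ is filled by congruent copies of a suitable bounded open convex set and then invokes Proposition \ref{c8}. The differences lie in how the two key steps are carried out. For the forward direction the paper simply cites the fact that a convex domain is Lipschitz (Morrey) and hence satisfies the cone condition with respect to some $C^*$, whereas you prove this by hand via the ice-cream-cone construction, which has the merit of giving explicit constants $\rho^*=r(C)/2$, $\alpha^*=\arctan\bigl(r(C)/\diam C\bigr)$; strictly speaking your intermediate cone of half-angle $\arctan(r/|p-b|)$ is not quite inside the hull of $\{p\}$ and the \emph{open} ball $B(b,r)$, but this is cosmetic, since the smaller closed cone of angle $\alpha^*$ and length $r/2$ that you actually need does lie in it (or one may work with $\overline{B(b,r/2)}\subset C$). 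For the reverse direction the paper's argument is considerably shorter than yours: since the closed cone $x+QC^*$ is a compact subset of the open set $\om$, it has positive distance to $\partial\om$, so it can be translated back along its axis by a small $x$-dependent $\ep$, which already shows that $\om$ is $({\rm int}\,C^*)$-filled; no change of aperture and no quantitative estimate is needed, because one does not have to keep the translated cone inside $B(x,\dist(x,\partial\om))\cup(x+QC^*)$ --- the whole translate lies in $\om$ by compactness. Your ``shorter and sharper'' cone $D$ with the constant $M$ and the two-region estimate is a correct substitute (the inequalities check out), and it makes the dependence on $\dist(x,\partial\om)$ explicit while avoiding the compactness observation, but it is more work than the problem requires. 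Your closing remark that the cone condition, $C$-connectedness for some $C$, and $C$-filledness for some $C$ are all equivalent is implicit in the paper's proof as well.
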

\begin{proof}
Let $\om$ satisfy the cone condition with respect to $C^*$. If $x\in\om$ with
$x+QC^*\subset\om$ then
$x\in x+Q(C^*-\ep e_1)\subset\om$ for $\ep>0$ sufficiently small. Hence  $\om$
is $(\text{int}\,C^*)$-filled, and hence, by Proposition \ref{c8},
$\lambda(\text{int}\,C^*)$-connected for sufficiently small $\lambda>0$.

Conversely, let $\om$ be $C$-connected for some $C$. Since $C$ is convex it is Lipschitz (see Morrey \cite[p.~72]{morrey66}) and hence satisfies the cone condition with respect to some $C^*$.
Since $\om$ is $C$-filled it follows immediately that $\om$ also satisfies the cone condition
 with respect to $C^*$.
\end{proof}
Despite this result, the concept of $C$-connectedness is of interest
 since we will show that the constants in the transition layer estimate of Theorem \ref{transest}
 can be chosen to
 depend on $\om$ through $C$.
\subsection{The Vitali Covering Lemma}
The following simpler version \cite{stein70} of the  Vitali covering lemma is used in an important way
in the transition layer estimate.
\begin{lem}
Let $G$ be a measurable subset of $\R^n$ which is covered by the union of a family of
balls $\{B_i\}$ of bounded diameter.  From this family we can select a countable or
finite   disjoint subsequence $B_{i(k)}, k = 1, 2, \dots$ such that
\begin{displaymath}
\sum_k \mL^n(B_{i(k)}) \ge c_n \mL^n(G).
\end{displaymath}
Here, $c_n>0$ depends only on the dimension $n$.  The choice $c_n = 5^{-n}$ suffices.
\lbl{stein}
\end{lem}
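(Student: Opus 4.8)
The plan is to run the standard greedy selection underlying Vitali's argument. Since the diameters are bounded, set $D=\sup_i\diam B_i<\infty$. I would build the disjoint subsequence by induction: having chosen pairwise disjoint balls $B_{i(1)},\ldots,B_{i(k)}$ of the family, stop if every $B_i$ meets $B_{i(1)}\cup\cdots\cup B_{i(k)}$; otherwise the subfamily of those $B_i$ disjoint from $B_{i(1)}\cup\cdots\cup B_{i(k)}$ is nonempty with diameters $\le D$, and I pick $B_{i(k+1)}$ from it with $\diam B_{i(k+1)}$ greater than half the supremum of the diameters in that subfamily. This yields a finite or countable pairwise disjoint subsequence $\{B_{i(k)}\}$ of $\{B_i\}$.

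The geometric core is the elementary dilation bound: if $B=B(x,r)$ meets $B'=B(x',r')$ and $r'>\half r$, then $B\subset B(x',5r')=:5B'$, because any $y\in B$ satisfies $|y-x'|\le|y-x|+|x-x'|<r+(r+r')=2r+r'<5r'$. Using this I would show $\bigcup_i B_i\subset\bigcup_k 5B_{i(k)}$. If the selection terminates after $N$ steps, every $B_i$ meets some $B_{i(k)}$; for the least such $k$ the ball $B_i$ was still a candidate at stage $k$, so $\diam B_{i(k)}>\half\diam B_i$ and hence $B_i\subset 5B_{i(k)}$. If the selection never terminates, I would split into two cases: if $\sum_k\mL^n(B_{i(k)})=\infty$ the desired inequality is automatic, while if $\sum_k\mL^n(B_{i(k)})<\infty$ then $\diam B_{i(k)}\to 0$, so no $B_i$ can stay disjoint from all the selected balls (it would otherwise remain a candidate forever), and again $B_i\subset 5B_{i(k)}$ for the first $k$ with $B_i\cap B_{i(k)}\neq\emptyset$.

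Granting the inclusion $G\subset\bigcup_i B_i\subset\bigcup_k 5B_{i(k)}$, monotonicity and countable subadditivity of Lebesgue measure give
\[
\mL^n(G)\le\sum_k\mL^n(5B_{i(k)})=5^n\sum_k\mL^n(B_{i(k)}),
\]
i.e.\ $\sum_k\mL^n(B_{i(k)})\ge 5^{-n}\mL^n(G)$, which is the claim with $c_n=5^{-n}$.

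I expect the only delicate point to be the non-terminating case: one must treat $\sum_k\mL^n(B_{i(k)})=\infty$ on its own (nothing to prove) and, in the complementary case, deduce $\diam B_{i(k)}\to 0$ from disjointness together with finiteness of the total measure — this is exactly what forces every ball of the family to meet one of the selected balls, and hence to lie in its fivefold dilate. The remainder is the dilation estimate and routine measure theory.
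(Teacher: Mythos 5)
Your proof is correct: the greedy selection of a ball whose diameter exceeds half the supremum among the remaining disjoint candidates, the $5r$-dilation bound, and the separate treatment of the non-terminating case with $\sum_k\mL^n(B_{i(k)})<\infty$ forcing $\diam B_{i(k)}\to 0$ are all sound, and together they give $G\subset\bigcup_k 5B_{i(k)}$ and hence the bound with $c_n=5^{-n}$. The paper does not prove the lemma but cites Stein \cite{stein70}, and your argument is precisely the standard proof given there, so there is nothing further to compare.
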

\section{Incompatible sets} \label{is}
 Let $\om\subset \R^n$ be a bounded domain.    Fix $p$ with $1 \leq p
\leq \infty$.  

\begin{defn}
\lbl{ymd}
The   closed subsets $K_1,\ldots,K_N$ of $M^{m\times
n}$ are $L^p$
 {\it  incompatible} if they are disjoint, and if
whenever $\nu=(\nu_{x})_{x\in\om}$ is an $L^p$ gradient Young measure  
satisfying
$$\supp\nu_{x} \subset \bigcup _{r=1}^{N} K_{r} \ \ \text { for
a.e. }
 x\in \om,$$
then for some $i, \ 1 \leq i \leq N$,
$$ \supp\nu_{x} \subset K_{i} \ \  \text  { for   a.e. } x\in
\om .$$
\end{defn}

\begin{rem}
\lbl{rem1}\rm  a. It is easily seen that the sets $K_{1},\ldots,
K_{N}$ are $L^p$ incompatible if and only if for each $i=1,\ldots, N$ the pair of
sets $K_{i},
\ \bigcup_{r \neq i} K_{r}$ are $L^p$ incompatible.
 The latter condition is
obviously necessary, and it is sufficient since if $\supp
\nu_{x} \subset
\bigcup_{r=1}^{N} K_{r}$ for a.e. $x\in \Omega$ then we have
for each
$i$ either
$$\supp\nu_{x} \subset K_{i} \ \  \text{ for a.e. } x \in
\Omega$$
or
$$ \supp \nu_{x} \subset \bigcup_{r \neq i} K_{r},$$
and  $\bigcap_{i=1}^{N} \bigcup_{r \neq i} K_{r}$  is empty.
For this reason we can often restrict attention to the case
$N=2$.

\medskip

\noindent b.  The definition does not
 depend on $\Omega$.  By the above remark we may
assume that $N=2$. So let
  $K_1, K_2$
be  $L^p$ incompatible with respect to $\om$
and let $\tilde\om\subset\R^n$ be another bounded domain. Let
$D\tilde
y^{(j)}$ be a
sequence of gradients that is relatively weakly compact in $L^p(\tilde\om;M^{m\times
n})$ with corresponding 
gradient Young measure $(\tilde\nu_x)_{x\in\tilde\om}$
satisfying
$\supp \tilde\nu_x\subset K_1\cup K_2$ for a.e.
$x\in\tilde\om$.
Let $G_1=\{x\in\tilde\om:\supp\tilde\nu_x\cap
K_1\ne\emptyset\}$,
$G_2=\{x\in\tilde\om:\supp\tilde\nu_x\cap K_2\ne\emptyset\}$
and suppose for
contradiction that ${\mathcal L}^n (G_1)>0,  {\mathcal L}^n
(G_2)>0$. By
hypothesis we have that
\be
\lbl{1.0}
{\mathcal
L}^n(\tilde\om\backslash(G_1\cup G_2))=0.
\ee
Let $x_1, x_2$ be Lebesgue points of $G_1, G_2$
respectively. Since
$\tilde\om$ is connected there is a continuous arc $x(t),$
$t\in [0,1],$ with
$x(0)=x_1$, $x(1)=x_2$  and $x(t)\in \tilde\om$ for all
$t\in[0,1]$. Then there
exists $\ep_1>0$ such that $x(t)+\ep\om\subset\tilde\om$ for
all $t\in[0,1]$,
$0<\ep\leq\ep_1$. Fix $0<\ep\leq\ep_1$ sufficiently small so
that
${\mathcal L}^n((x_1+\ep\om)\cap G_1)>0$ and
${\mathcal L}^n((x_2+\ep\om)\cap G_2)>0$, which is possible
since $x_1,x_2$ are
Lebesgue points.
Define for $i=1,2$
$$f_i(t)=\frac{{\mathcal L}^n((x(t)+\ep\om)\cap G_i)}
{\ep^n{\mathcal L}^n(\om)}.$$
Then each $f_i$ is continuous in $t$, and by construction
$f_1(0)>0, f_2(1)>0$.
But from \eqref{1.0}
$$f_1(t)+f_2(t)\geq 1,$$
from which it follows easily that there exists $t_0\in
[0,1]$ with
$0<f_i(t_0)\leq 1$ for $i=1,2$, i.e.
\be
\lbl{1.1}
{\mathcal L}^n((x(t_0)+\ep\om)\cap G_1)>0,\;\;{\mathcal
L}^n((x(t_0)+\ep
\om)\cap G_2)>0.
\ee
Now let $y^{(j)}(x)=\ep^{-1}\tilde y^{(j)}(x(t_0)+\ep
x)$, which is well defined because $\tilde y^{(j)}\in L^1_{\rm loc}(\tilde\om;\R^m)$. Then
$Dy^{(j)}(x)=D\tilde y^{(j)}(x(t_0)+\ep x)$ and so
$Dy^{(j)}$ is relatively weakly compact in
 $L^p(\om;M^{m\times n})$ and has Young measure
 \be
 \lbl{1.2}
  \nu_x=\tilde\nu_{x(t_0)+\ep x}, \;\; x\in\om.
 \ee
 Furthermore $\supp \nu_x\subset K_1\cup K_2$ for a.e.
$x\in\om$, and
 so either $\supp\nu_x\subset K_1$ for a.e. $x\in\om$
  or $\supp \nu_x\subset K_2$  for
 a.e. $x\in\om$. This implies that $\supp\tilde\nu_x\subset
K_1$ for a.e.
  $x\in x(t_0)+\ep\om$
 or $\supp\tilde\nu_x\subset K_2$ for
 a.e. $x\in x(t_0)+\ep\om$, contradicting
 \eqref{1.1}. 
 
 \medskip

\noindent c. If the sets $K_{1},\ldots,K_{N}$ are compact
then the definition is
independent of $p$.  Consequently in
this case we say
simply that $K_{1},\dots,K_{n}$ are  {\it incompatible}.
 In fact suppose that $K_{1},\ldots,
K_{N}$ are compact
and $L^\infty$ incompatible.  Let $1 \leq p < \infty$
and let
$Dy^{(j)}$ be weakly relatively compact in $L^{p}$ and have Young measure
$(\nu_{x})_{x
\in \Omega}$ with  $\supp\nu_{x} \subset \bigcup_{r=1}^{N}
K_{r}$ for a.e.
$x\in \Omega.$  Then by Lemma \ref{zhanglemma} there is a sequence of 
gradients $Dz^{(j)}$ which is bounded in $L^{\infty}$ and
has the same Young
measure, so that $K_{1},\dots,K_{n}$ are $L^p$ incompatible.

\medskip

\noindent d.  {\it The case $p=1$.}   An alternative definition of $L^1$ incompatible sets would have been to replace the weak relative compactness of $Dy^{(j)}$ by boundedness of $Dy^{(j)}$ in $L^{1}(\Omega; M^{m\times
n})$. But with such a modification no family of disjoint
closed subsets of
$M^{m\times n}$ would be  $L^1$ incompatible.  In fact
if $K_1,K_2$ were a pair of $L^1$ incompatible sets in this sense, we could
let $\Omega=[-1,1]^{n}, A \in K_{1}, \ B \in K_{2}$, and
define
\begin{displaymath}
y^{(j)}(x)=\left\{ \begin{array}{ll}
Ax & {\rm if} \ x_{1} \leq 0, \\
jx_{1}Bx+(1-jx_{1})Ax & {\rm if} \ 0<x_{1}< \frac{1}{j}, \\
Bx & {\rm if} \ x_{1} \geq \frac{1}{j}.
\end{array}\right.
\end{displaymath}

\noindent Then
$$Dy^{(j)}(x)=jx_{1}B+(1-jx_{1})A+j(B-A)x \otimes e_{1}$$
for $0<x_{1} < \frac{1}{j}$, so that
$$\int_{[-1,1]^{n}} \mid Dy^{(j)} \mid dx \leq C < \infty.$$
 But the corresponding Young measure $(\nu_{x})_{x \in
\Omega}$ is given by
\begin{displaymath}
\nu_{x}=\left\{ \begin{array}{ll}
\delta_{A} \ {\rm if} \ x_{1} < 0, \\
\delta_{B} \ {\rm if} \ x_{1} > 0.
\end{array}\right.
\end{displaymath}
\end{rem}

\begin{defn}
\lbl{hymd}
 The closed subsets $K_{1},\ldots,K_{n}$ of $M^{m\times n}$
are  {\it homogeneously $L^p$
incompatible} if they are disjoint, and if
whenever $\nu$ is a homogeneous $L^p$ gradient Young measure
generated by a sequence
 satisfying
$$ \supp  \nu \subset \bigcup_{r=1}^{N}K_{r},$$
 then for some $i$, $1 \leq i \leq N$,
$$\supp \nu \subset K_{i}.$$
\end{defn}
 The same arguments as in Remark \ref{rem1} show that this
definition too is
independent of $\Omega$ and, in the case when the $K_{r}$
are compact, also of
$p$ with $1 \leq p \leq \infty$ (in which case we say that the
$K_{r}$ are
 {\it homogeneously incompatible}).

\begin{defn}
  The closed subsets
$K_{1},\ldots,K_{N}$ of $M^{m\times n}$ are
$L^p$ {\it gradient incompatible} if they are disjoint, and if
whenever $Dy \in L^{p}(\Omega;M^{m\times n})$
 with
$$Dy(x) \in  \bigcup_{r=1}^{N}K_{r} \ \ {\rm for \ a.e.}
\ x \in \Omega$$
 then
$$Dy(x) \in K_{i} \ \  {\rm for \ a.e.} \ x \in \Omega$$
 for some $i$.
\end{defn}
Again the definition is independent of $\om$ and, in the case when the $K_r$ are
compact, also of $p$ with $1\leq p\leq\infty$ (in which case we say that the
$K_r$ are {\it gradient incompatible}).

Note that if $n=1$ or $m=1$ then no pair of disjoint nonempty closed sets $K_1, K_2$ can be homogeneously $L^p$ incompatible, since if $A_1\in K_1$, $A_2\in K_2$ then $\rank (A_1-A_2)=1$, so that $\frac{1}{2}(\delta_{A_1}+\delta_{A_2})$ is a homogeneous $L^\infty$ gradient Young measure supported nontrivially on $K_1\cup K_2$; similarly $K_1$ and $K_2$ are not $L^\infty$ gradient incompatible. Thus most of the results of this paper are only relevant for $n\geq 2$ and $m\geq 2$.

 Of course if $K_{1},\ldots,K_{N}$ are $L^p$ incompatible they are also
$L^p$ gradient incompatible.  However the converse is false 
(for other examples see Examples \ref{3matrices}, \ref{4matrices}).
\begin{example}
 Let $m=n=2$, $\{e_1, e_2\}$ be an orthonormal basis of $\R^2$,  $K_{1}=\{\1\},  K_{2}=\{{\bf 0},2e_{2}
\otimes e_{2}\}$.
Then $K_{1},K_{2}$ are not incompatible.  To see this note
that $\1=e_{1} \otimes
e_{1}+e_{2} \otimes e_{2}$,  so that
\be
\nonumber
\1-e_{2} \otimes e_{2}&=&e_{1} \otimes e_{1}\\
e_{2} \otimes e_{2}&=& \frac{1}{2}({\bf 0} + 2e_{2} \otimes
e_{2}).
\ee
 Thus a double laminate can be constructed having
homogeneous gradient Young measure
$$\nu= \frac{1}{2} \delta_{\1} + \frac{1}{4} \delta_{\bf 0}+
\frac{1}{4} \delta_{2e_{2}\otimes e_2}.$$
However, $K_{1},K_{2}$ are gradient incompatible.
In fact if
$Dy(x)\in K_{1} \cup K_{2}$ a.e. in
$\Omega=(0,1)^{2}$, we have
$$Dy(x)= \lambda(x)\1+2\mu (x)e_{2} \otimes e_{2},$$
where $\lambda(x)\mu (x)=0$,  $\lambda(x) \in
\{0,1\}$ and $\mu(x)
\in \{0,1\}$ almost everywhere. Hence $y_{,1}= \lambda e_{1},
y_{,2}=(\lambda+2\mu)e_{2}$ and so
$$\lambda_{,2}=(\lambda+2\mu)_{,1}=0$$
 in the sense of distributions.  Hence
$\lambda=\lambda(x_{1}), \lambda +2\mu=f(x_{2})$
from which it follows easily that either $\lambda=0$ a.e. or
$\lambda=1$ a.e.
as required.
\end{example}

\subsection{Characterization of incompatible
sets}

Clearly if $K_{1},\ldots,K_{N}$ are $L^p$ incompatible they
are homogeneously
$L^p$ incompatible.  We do not know if the converse holds,
even if the $K_{r}$ are
compact (but see Remark \ref{twograds} for the case $m=n=2$).  It is possible to characterize homogeneously incompatible
sets in terms
of quasiconvex functions.  We first prove some preliminary results relating incompatibility of the sets $K_r$ to that of the sets $K_r^{\qc}$.

\begin{lemma}
\lbl{prop1}
 If $K_{1},\ldots, K_{N}$ are homogeneously  $L^\infty$ incompatible
  then  \\
$(\bigcup_{r=1}^{N}K_{r})^\qc$ is the disjoint union of the
sets $K_{r}^\qc$.
\end{lemma}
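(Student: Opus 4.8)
The plan is to reduce the statement to Proposition~\ref{qcequiva}, which for a compact set $K$ identifies $K^\qc$ with the set of barycentres $\bar\mu$ of homogeneous $L^\infty$ gradient Young measures $\mu$ having $\supp\mu\subset K$, combined with the defining property of homogeneous incompatibility (Definition~\ref{hymd} with $p=\infty$; recall that for compact sets this notion is independent of $p$). There are two things to prove: the equality $(\bigcup_{r=1}^N K_r)^\qc=\bigcup_{r=1}^N K_r^\qc$, and the pairwise disjointness of the sets $K_r^\qc$.

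For the equality I would first note that $\bigcup_r K_r^\qc\subset(\bigcup_r K_r)^\qc$ is immediate from the monotonicity of $H\mapsto H^\qc$ (clear from its definition as an intersection of quasiconvex supersets), since $K_r\subset\bigcup_s K_s$ for each $r$. For the reverse inclusion I would take $A\in(\bigcup_r K_r)^\qc$; as $\bigcup_r K_r$ is compact, Proposition~\ref{qcequiva} supplies a homogeneous $L^\infty$ gradient Young measure $\mu$, generated by a sequence, with $\supp\mu\subset\bigcup_r K_r$ and $\bar\mu=A$. Homogeneous $L^\infty$ incompatibility then forces $\supp\mu\subset K_i$ for some $i$, and a second application of Proposition~\ref{qcequiva} gives $A=\bar\mu\in K_i^\qc$. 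Hence $(\bigcup_r K_r)^\qc\subset\bigcup_r K_r^\qc$.

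For disjointness I would argue by contradiction. Suppose $A\in K_i^\qc\cap K_j^\qc$ with $i\neq j$; Proposition~\ref{qcequiva} then provides homogeneous $L^\infty$ gradient Young measures $\mu_i,\mu_j$ with $\supp\mu_i\subset K_i$, $\supp\mu_j\subset K_j$ and $\bar\mu_i=\bar\mu_j=A$. The crux is to observe that $\mu:=\frac{1}{2}(\mu_i+\mu_j)$ is again a homogeneous $L^\infty$ gradient Young measure: its barycentre $A$ is the gradient of the affine map $x\mapsto Ax$, its support lies in the compact set $K_i\cup K_j$, and for every finite quasiconvex $\varphi$ one has $\langle\mu,\varphi\rangle=\frac{1}{2}\langle\mu_i,\varphi\rangle+\frac{1}{2}\langle\mu_j,\varphi\rangle\geq\frac{1}{2}\varphi(A)+\frac{1}{2}\varphi(A)=\varphi(\bar\mu)$, so conditions (i)--(iii) of Theorem~\ref{kp} hold with $p=\infty$ and the sufficiency part of that theorem applies. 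Since $\mu(K_i)\geq\frac{1}{2}$ and $\mu(K_j)\geq\frac{1}{2}$, the support of $\mu$ meets each of the two closed disjoint sets $K_i$ and $K_j$, so $\supp\mu$ cannot be contained in any single $K_r$, which contradicts homogeneous $L^\infty$ incompatibility. This establishes disjointness and, together with the set equality, the lemma. The only step that is not purely formal is the stability of the class of homogeneous $L^\infty$ gradient Young measures with prescribed barycentre under convex combinations, and I do not expect it to be a real obstacle: the Jensen-type inequalities characterizing gradient Young measures in Theorem~\ref{kp} are linear in the measure, so they pass to convex combinations automatically.
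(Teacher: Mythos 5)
Your proof is correct and follows essentially the same route as the paper: the reverse inclusion via the barycentre characterization of $K^\qc$ plus incompatibility, and disjointness by averaging two homogeneous measures with common barycentre to contradict incompatibility. The only cosmetic difference is that the paper cites Kinderlehrer \& Pedregal for the convexity of the set of homogeneous $L^\infty$ gradient Young measures with a given centre of mass, whereas you verify it directly from conditions (i)--(iii) of Theorem~\ref{kp}, which is fine.
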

\begin{proof}
 We first show that $K_{r}^\qc \bigcap K_{s}^\qc$ is empty if
$r \neq s$.  Suppose
the contrary, that there exists an $A \in K_{r}^\qc \bigcap
K_{s}^\qc$.
Then there exist homogeneous $L^\infty$ Young measures
$\nu^{r}$ and $\nu^s$
with $\supp\nu^r \subset K_{r}$, $\supp \nu^s \subset
K_{s}$ and
$\bar \nu^r=\bar \nu^s=A$.  But the set of homogeneous
$L^\infty$ Young
measures with a given centre of mass $A$ is convex
(Kinderlehrer \& Pedregal
\cite{kinderlehrerpedregal91}),
and thus $\nu= \frac{1}{2}(\nu^r+\nu^s)$ is a
homogeneous $L^\infty$ Young
measure with $\supp\nu \subset K_{r} \cup K_{s}$ and both
$\supp
\nu \cap K_{r}$ and $\supp \nu \cap K_{s}$ nonempty.  Thus
$K_{1},\ldots,K_{N}$ are not homogeneously $L^\infty$ incompatible.
 
 Next, let $A \in (\bigcup_{r=1}^{N}K_{r})^\qc$.  Then $A= \bar
\nu$ for some
homogeneous $L^\infty$ Young measure $\nu$ with $\supp \nu
\subset
\bigcup_{r=1}^{N}K_{r}$, and by hypothesis $\supp  \nu
\subset K_{i}$ for
some $i$.  Hence $A \in K_{i}^\qc$, completing the proof.
\end{proof}
\begin{prop}
\lbl{quasi}
The   compact sets $K_1,\ldots,K_N$ are incompatible (resp. homogeneously incompatible) if and only if $K_1^\qc,\ldots,K_N^\qc$ are  incompatible (resp. homogeneously incompatible).
\end{prop}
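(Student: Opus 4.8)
The plan is to deduce everything from the splitting theorem (Theorem~\ref{splitting}) together with Lemma~\ref{prop1}. First I would record the setup. Each $K_r^\qc$ is compact: it is bounded, being contained in the closed convex hull of $K_r$, and closed, since any quasiconvex set is of the form $\varphi^{-1}(0)$ with $\varphi$ finite quasiconvex, hence continuous (cf.\ Proposition~\ref{qcequiva}). Consequently, by Remark~\ref{rem1}(c), incompatibility and homogeneous incompatibility for both families $\{K_r\}$ and $\{K_r^\qc\}$ are independent of $p$, and throughout one may work with $L^\infty$ gradient Young measures. I would also use repeatedly that since $K_i^\qc$ is closed, $\nu_x(K_i^\qc)=1$ forces $\supp\nu_x\subset K_i^\qc$. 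The homogeneous and non-homogeneous cases will be treated simultaneously, invoking the last sentence of Theorem~\ref{splitting} in the homogeneous case.

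For the implication ``$K_1,\ldots,K_N$ (homogeneously) incompatible $\Rightarrow$ $K_1^\qc,\ldots,K_N^\qc$ (homogeneously) incompatible'': Lemma~\ref{prop1} (in its $L^\infty$ form) gives that the $K_r^\qc$ are pairwise disjoint, so Theorem~\ref{splitting} is applicable. Let $\nu=(\nu_x)_{x\in\om}$ be an $L^\infty$ gradient Young measure (homogeneous, in the homogeneous case) with $\supp\nu_x\subset\bigcup_{r=1}^N K_r^\qc$ for a.e.\ $x$. Applying Theorem~\ref{splitting}, obtain an $L^\infty$ gradient Young measure $\nu^*=(\nu^*_x)$ (homogeneous if $\nu$ is) with $\supp\nu^*_x\subset\bigcup_{r=1}^N K_r$, $\bar\nu^*_x=\bar\nu_x$ and $\nu^*_x(K_r)=\nu_x(K_r^\qc)$ for a.e.\ $x$. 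Since $K_1,\ldots,K_N$ are (homogeneously) incompatible, there is a single index $i$ with $\supp\nu^*_x\subset K_i$ for a.e.\ $x$; hence $\nu^*_x(K_i)=1$, so $\nu_x(K_i^\qc)=1$ for a.e.\ $x$, and since $K_i^\qc$ is closed, $\supp\nu_x\subset K_i^\qc$ for a.e.\ $x$. Thus $K_1^\qc,\ldots,K_N^\qc$ are (homogeneously) incompatible.

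For the converse: if $K_1^\qc,\ldots,K_N^\qc$ are (homogeneously) incompatible they are in particular disjoint, hence so are $K_1,\ldots,K_N$, since $K_r\subset K_r^\qc$. Let $\nu=(\nu_x)$ be an $L^\infty$ gradient Young measure (homogeneous in the homogeneous case) with $\supp\nu_x\subset\bigcup_{r=1}^N K_r$ for a.e.\ $x$; then a fortiori $\supp\nu_x\subset\bigcup_{r=1}^N K_r^\qc$ for a.e.\ $x$, so by hypothesis $\supp\nu_x\subset K_i^\qc$ for some fixed $i$ and a.e.\ $x$. But the sets $K_r^\qc$ are disjoint and $K_r\subset K_r^\qc$, so $K_r\cap K_i^\qc=\emptyset$ for $r\neq i$; intersecting, $\supp\nu_x\subset\big(\bigcup_{r=1}^N K_r\big)\cap K_i^\qc=K_i$ for a.e.\ $x$, as required.

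The genuine content sits in Theorem~\ref{splitting} and Lemma~\ref{prop1}, both already available; within this proof the only points requiring (minor) care are the order of the argument in the forward direction --- disjointness of the $K_r^\qc$ via Lemma~\ref{prop1} must be established before Theorem~\ref{splitting} can be invoked --- and the observation that a single index $i$ serves for almost every $x$, which is immediate from the definition of incompatibility. I do not anticipate any real obstacle.
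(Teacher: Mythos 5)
Your proof is correct and follows essentially the same route as the paper: Lemma~\ref{prop1} for disjointness of the $K_r^\qc$, then Theorem~\ref{splitting} to pass from a Young measure supported on $\bigcup_r K_r^\qc$ to one supported on $\bigcup_r K_r$ with matching masses, and back. The only difference is that you spell out the converse direction, which the paper dismisses as obvious, and you add the (correct, harmless) observations that the $K_r^\qc$ are compact and that $\nu_x(K_i^\qc)=1$ forces $\supp\nu_x\subset K_i^\qc$.
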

\begin{proof}
Suppose that $K_1,\ldots,K_N$ are incompatible. By Lemma \ref{prop1} the $K_r^{\rm qc}$ are disjoint. Let $\nu=(\nu_x)_{x\in\Omega}$ be an $L^{\infty}$ gradient Young measure with $\supp \nu_x\subset \bigcup_{r=1}^{N}K_{r}^\qc$ for a.e. $x\in\om$. Then by Theorem \ref{splitting} there is an $L^{\infty}$ gradient Young measure $\nu^*=(\nu^*_x)_{x\in\om}$ with $\supp \nu^*_x\subset \bigcup_{r=1}^N K_r$ and $\nu^*_x(K_r)=\nu_x(K_r^\qc)$ for all $r$ and a.e. $x\in\om$. Since the $K_r$ are incompatible, we have that $\nu^*_x(K_i)=1$ for some $i$ and a.e. $x\in\om$. Hence $\nu_x(K_i^\qc)=1$ for a.e. $x\in\om$ and thus $K_1^\qc,\ldots, K_N^\qc$ are incompatible. The same argument shows that if the $K_r$ are homogeneously incompatible then so are the $K_r^\qc$. The converse direction is obvious.
\end{proof}
\begin{thm}
\lbl{homcom}
 The   compact sets $K_{1},\ldots, K_{N}$ are
homogeneously incompatible if
and only if
 
\noindent{\rm (}i{\rm )}  the sets $K_{r}^\qc$, $r=1,\ldots, N$, are
disjoint,

\noindent{\rm (}ii{\rm )} for each $i=1, \ldots, N$ the function
$\varphi_{i}: M^{m\times n} \longrightarrow
[0,\infty]$ defined by
\begin{displaymath}
\varphi_{i}(A)=\left\{ \begin{array}{ll}
1 & {\rm if} \ A \in K_{i}^\qc ,\\
0 & {\rm if} \ A \in \bigcup_{r \neq i} K_{r}^\qc, \\
+ \infty & {\rm otherwise,}
\end{array}\right.
\end{displaymath}
 is quasiconvex.
\end{thm}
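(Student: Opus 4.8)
The plan is to prove the two implications separately, using Lemma~\ref{qcequiv} as the main bridge. Since $\mathrm{dom}\,\varphi_i=\bigcup_{r=1}^N K_r^{\qc}$ is bounded (each $K_r^{\qc}$ lies in the convex hull of the compact set $K_r$), that lemma says $\varphi_i$ is quasiconvex if and only if it is lower semicontinuous and $\langle\mu,\varphi_i\rangle\ge\varphi_i(\bar\mu)$ for every homogeneous $L^\infty$ gradient Young measure $\mu$. So the theorem reduces to matching this Jensen-type inequality against the defining property of homogeneous incompatibility, where Proposition~\ref{quasi} (incompatibility of the $K_r$ is equivalent to incompatibility of the $K_r^{\qc}$) and Lemma~\ref{prop1} (the $K_r^{\qc}$ are then disjoint) are the tools that let us pass freely between the $K_r$ and their quasiconvexifications.

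For necessity, suppose $K_1,\dots,K_N$ are homogeneously incompatible. Condition (i) is exactly Lemma~\ref{prop1}, and Proposition~\ref{quasi} makes the sets $K_r^{\qc}$ homogeneously incompatible as well. Fixing $i$, I would first check that $\varphi_i$ is lower semicontinuous by a short neighbourhood argument using that the $K_r^{\qc}$ are closed and disjoint: near a point of $K_i^{\qc}$ one has $\varphi_i\ge 1$, near a point outside $\bigcup_r K_r^{\qc}$ one has $\varphi_i\equiv+\infty$, and at points where $\varphi_i$ vanishes lower semicontinuity is automatic. Then, to verify the gradient-Young-measure inequality, I would split into the case $\supp\mu\not\subset\bigcup_r K_r^{\qc}$, where $\langle\mu,\varphi_i\rangle=+\infty$ and there is nothing to prove, and the case $\supp\mu\subset\bigcup_r K_r^{\qc}$, where homogeneous incompatibility of the $K_r^{\qc}$ forces $\supp\mu$ into a single $K_j^{\qc}$. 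In the latter case $\bar\mu\in(K_j^{\qc})^{\qc}=K_j^{\qc}$ by Proposition~\ref{qcequiva}, and $\varphi_i$ is constant on $K_j^{\qc}$ (namely $1$ if $j=i$ and $0$ otherwise), so $\langle\mu,\varphi_i\rangle=\varphi_i(\bar\mu)$; Lemma~\ref{qcequiv} then gives (ii).

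For sufficiency, suppose (i) and (ii) hold; the $K_r$ are disjoint because $K_r\subset K_r^{\qc}$. Given a homogeneous $L^\infty$ gradient Young measure $\nu$ with $\supp\nu\subset\bigcup_{r=1}^N K_r$, I set $\lambda_r=\nu(K_r^{\qc})$; disjointness of the $K_r^{\qc}$ together with $\supp\nu\subset\bigcup_r K_r^{\qc}$ gives $\sum_r\lambda_r=1$, and since $\varphi_i$ equals $1$ on $K_i^{\qc}$, $0$ on the remaining $K_r^{\qc}$, and $\nu$ gives no mass to $M^{m\times n}\setminus\bigcup_r K_r^{\qc}$, one computes $\langle\nu,\varphi_i\rangle=\lambda_i$. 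Quasiconvexity of $\varphi_i$ via Lemma~\ref{qcequiv} then yields $\varphi_i(\bar\nu)\le\lambda_i<\infty$ for every $i$, so $\bar\nu\in\bigcup_r K_r^{\qc}$; by (i) it lies in a unique $K_j^{\qc}$, and $1=\varphi_j(\bar\nu)\le\lambda_j$ forces $\lambda_j=1$ and $\lambda_r=0$ for $r\ne j$. Since $K_r\subset M^{m\times n}\setminus K_j^{\qc}$ for $r\ne j$ and that set is $\nu$-null, we get $\nu(K_j)=1$, i.e.\ $\supp\nu\subset K_j$, so the $K_r$ are homogeneously incompatible.

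I expect the one genuinely delicate point to be the evaluation of $\langle\mu,\varphi_i\rangle$ in the necessity argument: it is precisely there that homogeneous incompatibility must be invoked, through Proposition~\ref{quasi}, to rule out a homogeneous gradient Young measure straddling two of the $K_r^{\qc}$, which is what makes $\varphi_i$ constant on $\supp\mu$ and turns the Jensen inequality into an equality. The remaining ingredients, namely boundedness and lower semicontinuity so that Lemma~\ref{qcequiv} applies and the bookkeeping with the disjoint decomposition of $M^{m\times n}$ into the $K_r^{\qc}$ and their complement, are routine.
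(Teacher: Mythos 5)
Your proof is correct and takes essentially the same route as the paper: (i) via Lemma~\ref{prop1}, (ii) by reducing through Lemma~\ref{qcequiv} to the Jensen inequality and using Proposition~\ref{quasi} to force $\supp\mu$ into a single $K_j^{\qc}$, where $\varphi_i$ is constant. Your sufficiency argument is also the paper's, with only cosmetic bookkeeping differences (the paper writes $\nu=\sum_r\lambda_r\nu^r$ with $\supp\nu^r\subset K_r$, which produces the same weights as your $\lambda_r=\nu(K_r^{\qc})$).
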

\begin{proof}
 Let $K_{1},\ldots,K_{N}$ be homogeneously incompatible.  Then
$(i)$ holds by
Lemma \ref{prop1}.  To prove (ii), by Lemma \ref{qcequiv} with $K=\bigcup_{r=1}^NK_r$ it suffices to show that
\begin{equation}\label{phii}
\langle\mu,\varphi_i\rangle\geq\varphi_i(\bar\mu)
\end{equation}
for any homogeneous $L^{\infty}$ gradient Young measure $\mu$. Since \eqref{phii} obviously holds if 
 $\langle\mu,\varphi_i\rangle=\infty$, we may assume that $\supp\nu\subset\bigcup_{r=1}^NK_r^\qc$. Then it follows from Proposition \ref{quasi} that $\supp\mu\subset K_j^\qc$ for some $j$, so that also $\bar\mu\in K_j^\qc$. Thus if $j\neq i$ both sides of \eqref{phii} are zero, while if $j=i$ then both sides are one.

 Conversely, suppose that (i) and (ii) hold, and let
$\nu$ be a homogeneous $L^{\infty}$ gradient Young
measure with $\supp \ \nu \subset
\bigcup_{r=1}^{N}K_{r}$.  Then $\nu =
\sum_{r=1}^{N} \lambda_{r} \nu^r$, where $\lambda_{r} \geq
0, \sum_{r=1}^{N}
\lambda_{r}=1$ and $\nu^r$ is a probability measure with
${\rm supp} \ \nu^r
\subset K_{r}$.  For any $k$ we have (since $\varphi_{k}$ is
quasiconvex)

$$\varphi_{k}(\bar{\nu}) \leq \langle\nu, \varphi_{k}\rangle= \lambda_{k}.$$
 In particular $\varphi_{k}(\bar{\nu}) < \infty$
and so $\bar{\nu} \in K_{i}^\qc$
for some $i$.  Choosing $k=i$ we obtain $\lambda_{i} \geq 1$
and so
$\nu=\nu^{i}$ and $\supp  \nu \subset K_{i}$.  Hence
$K_{1},\ldots,K_{N}$ are
homogeneously incompatible.
 \end{proof}
\begin{thm}
\lbl{com}
The   compact sets $K_1,...,K_N$ are incompatible if and only if

\noindent{\rm (}i{\rm )}  The sets $K_1^{\text qc},...,K_N^{\text qc}$ are gradient incompatible,

\noindent{\rm (}ii{\rm )} for each $i=1, \ldots, N$ the function
$\varphi_{i}: M^{m\times n} \longrightarrow
[0,\infty]$ defined by
\begin{displaymath}
\varphi_{i}(A)=\left\{ \begin{array}{ll}
1 & {\rm if} \ A \in K_{i}^\qc ,\\
0 & {\rm if} \ A \in \bigcup_{r \neq i} K_{r}^\qc, \\
+ \infty & {\rm otherwise,}
\end{array}\right.
\end{displaymath}
 is quasiconvex.
\end{thm}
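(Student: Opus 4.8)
The strategy is to bootstrap from Theorem~\ref{homcom} and Proposition~\ref{quasi}, using the elementary implications ``incompatible $\Rightarrow$ homogeneously incompatible'' and ``incompatible $\Rightarrow$ gradient incompatible'' (both noted earlier in \S\ref{is}). For the forward implication, if $K_1,\ldots,K_N$ are incompatible then they are in particular homogeneously incompatible, so Theorem~\ref{homcom} yields (ii); and by Proposition~\ref{quasi} the sets $K_1^{\qc},\ldots,K_N^{\qc}$ are themselves incompatible, hence gradient incompatible, which is (i). So the real content is the converse.

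Assume (i) and (ii). Condition (i) forces the $K_r^{\qc}$, and hence the $K_r$, to be disjoint. Let $\nu=(\nu_x)_{x\in\om}$ be an $L^{\infty}$ gradient Young measure with $\supp\nu_x\subset\bigcup_{r=1}^N K_r$ for a.e.\ $x$. Fixing $i$ and applying condition (ii) of Theorem~\ref{kp} (no growth condition being needed for $L^\infty$ gradient Young measures) to the continuous quasiconvex approximants $\varphi_i^{(j)}\uparrow\varphi_i$ supplied by Definition~\ref{qcinfty}, and then letting $j\to\infty$ by monotone convergence (the $\varphi_i^{(j)}$ being bounded below on the fixed compact set $\supp\nu_x\subset{\rm dom}\,\varphi_i=\bigcup_r K_r^{\qc}$), I obtain
$$\langle\nu_x,\varphi_i\rangle\ \geq\ \varphi_i(\bar\nu_x)\qquad\text{ for a.e.\ }x.$$
Since the $K_r^{\qc}$ are disjoint and $\supp\nu_x\subset\bigcup_r K_r\subset\bigcup_r K_r^{\qc}$, the left-hand side equals $\nu_x(K_i^{\qc})=\nu_x(K_i)\leq 1<\infty$; hence $\varphi_i(\bar\nu_x)<\infty$ for every $i$, so $\bar\nu_x\in\bigcup_r K_r^{\qc}$, say $\bar\nu_x\in K_{j(x)}^{\qc}$ with $j(x)$ unique by disjointness. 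Taking $i=j(x)$ in the displayed inequality gives $1=\varphi_{j(x)}(\bar\nu_x)\leq\nu_x(K_{j(x)})\leq 1$, so $\nu_x(K_{j(x)})=1$, i.e.\ $\supp\nu_x\subset K_{j(x)}$ for a.e.\ $x$ --- exactly as in the homogeneous argument of Theorem~\ref{homcom}.

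It remains to show that $j(\cdot)$ is a.e.\ constant, and here condition (i) enters in an essential way. By Theorem~\ref{kp}(i),(iii) we have $\bar\nu_x=Dy(x)$ for some $y$ with $Dy\in L^{\infty}(\om;M^{m\times n})$, and we have just shown $Dy(x)\in K_{j(x)}^{\qc}\subset\bigcup_{r=1}^N K_r^{\qc}$ for a.e.\ $x$. Since the $K_r^{\qc}$ are compact their gradient incompatibility is $p$-independent, so condition (i) applies and produces a single index $i_0$ with $Dy(x)\in K_{i_0}^{\qc}$ for a.e.\ $x$; disjointness of the $K_r^{\qc}$ then forces $j(x)=i_0$ a.e., whence $\supp\nu_x\subset K_{i_0}$ for a.e.\ $x$. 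Thus $K_1,\ldots,K_N$ are incompatible.

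I expect the two delicate points to be: establishing the pointwise-in-$x$ Jensen inequality $\langle\nu_x,\varphi_i\rangle\geq\varphi_i(\bar\nu_x)$ for the $\{+\infty\}$-valued quasiconvex $\varphi_i$ (handled by the approximation-plus-monotone-convergence step above, where compactness of ${\rm dom}\,\varphi_i$ is what makes the approximants integrable against $\nu_x$), and --- the genuine point of the theorem --- recognising that it is \emph{gradient} incompatibility of the $K_r^{\qc}$, rather than their mere disjointness (which sufficed in the homogeneous Theorem~\ref{homcom}), that bridges the gap between ``each $\nu_x$ is carried by a single well'' and ``one well works for a.e.\ $x$'', the bridge being that $x\mapsto\bar\nu_x$ is a genuine $L^\infty$ gradient. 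The remaining ingredients --- the disjointness bookkeeping, the identity $\nu_x(K_i^{\qc})=\nu_x(K_i)$, and measurability of $j(\cdot)$ --- are routine.
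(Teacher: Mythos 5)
Your proposal is correct and takes essentially the same route as the paper: the forward direction is obtained from Theorem \ref{homcom} plus the splitting machinery (you route condition (i) through Proposition \ref{quasi}, where the paper applies Theorem \ref{splitting} directly to $(\delta_{Dy(x)})$), and the converse rests, exactly as in the paper, on using quasiconvexity of the extended-valued $\varphi_i$ to force $\supp\nu_x\subset K_{j(x)}$ for a.e.\ $x$ and then applying gradient incompatibility of the $K_r^{\qc}$ to the barycentre gradient $Dy(x)=\bar\nu_x$ to make the well index constant. The only cosmetic difference is that in the converse you re-derive the pointwise Jensen inequality for $\varphi_i$ via the monotone approximants of Definition \ref{qcinfty}, whereas the paper invokes the fact that $\nu_x$ is a.e.\ a homogeneous $L^\infty$ gradient Young measure and cites Theorem \ref{homcom}; both steps are sound.
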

\begin{proof}
Let $K=\bigcup_{r=1}^NK_r$.
Suppose that $K_1,\ldots,K_N$ are incompatible. Then $K_1,\ldots,K_N$ are homogeneously
incompatible, so that by Lemma \ref{prop1} and Theorem \ref{homcom} the sets $K_r^\qc$ are disjoint,
$K^{\qc}=\bigcup_{r=1}^NK_r^{\qc}$ and (ii) holds.
To show that the $K_r^\qc$ are gradient incompatible, suppose that $Dy\in L^{\infty}(\om; M^{m\times n})$
satisfies $Dy(x)\in K^\qc$ almost everywhere.  It follows from Theorem \ref{splitting} applied to the gradient Young measure $\nu=(\delta_{Dy(x)})_{x\in\om}$ that there exists a gradient Young measure $(\nu^*_x)_{x\in\om}$ with $\supp\nu^*_x\subset K$ and $\bar\nu^*_x=Dy(x)$ almost everywhere. But then by hypothesis $\supp\nu^*_x\subset K_s$ a.e. for some $s$ and so $Dy(x)\in K_s^\qc$ almost everywhere.

Conversely, let (i) and (ii) hold, and let $(\nu_x)_{x\in\om}$ be an $L^{\infty}$ gradient Young measure with $\supp\nu_x\subset\bigcup_{r=1}^NK_r$ almost everywhere. Then, for a.e. $x\in\om$, $\nu_x$ is a homogeneous $L^{\infty}$ gradient Young measure, and so by Theorem \ref{homcom} $\supp\nu_x\subset K_{r(x)}$ for some $r(x)$, and hence $\bar\nu_x\in K_{r(x)}^\qc$. Thus $Dy(x)=\bar\nu_x\in\bigcup_{r=1}^NK_r^\qc$ a.e., and so $Dy(x)\in K_s^\qc$ a.e. for some $s$. Since the $K_r^\qc$ are disjoint, $r(x)=s$ a.e. and hence $\supp\nu_x\subset K_s$
almost everywhere.
\end{proof}

\begin{cor}
\lbl{quasiz}
The   compact sets $K_1,\ldots,K_N$ are incompatible if and only if $K_1,\ldots,K_N$ are homogeneously incompatible and $K_1^\qc,\ldots,K_N^\qc$ are gradient incompatible.
\end{cor}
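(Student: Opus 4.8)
The plan is to deduce this corollary directly by combining Theorem \ref{homcom} and Theorem \ref{com}, since both give characterizations whose only non-overlapping ingredients are, respectively, the quasiconvexity of the functions $\varphi_i$ and the (homogeneous versus gradient) incompatibility of the quasiconvexifications.

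First I would treat the forward implication. Assume $K_1,\ldots,K_N$ are incompatible. As already remarked before the statement, incompatibility trivially implies homogeneous incompatibility (a homogeneous gradient Young measure is in particular a gradient Young measure). Moreover, by condition (i) of Theorem \ref{com}, the sets $K_1^\qc,\ldots,K_N^\qc$ are gradient incompatible. This gives both asserted properties.

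Next I would treat the converse. Suppose $K_1,\ldots,K_N$ are homogeneously incompatible and $K_1^\qc,\ldots,K_N^\qc$ are gradient incompatible. By Theorem \ref{homcom}, homogeneous incompatibility yields that the $K_r^\qc$ are disjoint and that each function $\varphi_i$ of the statement of Theorem \ref{homcom} (equivalently, of Theorem \ref{com}) is quasiconvex; this is precisely condition (ii) of Theorem \ref{com}. Together with the hypothesis that the $K_r^\qc$ are gradient incompatible, which is condition (i) of Theorem \ref{com}, an application of Theorem \ref{com} shows that $K_1,\ldots,K_N$ are incompatible.

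There is no real obstacle here: the content has already been packaged into Theorems \ref{homcom} and \ref{com}, and the corollary is simply the observation that condition (ii) common to both characterizations is equivalent, given disjointness of the $K_r^\qc$, to homogeneous incompatibility. The only point to be careful about is to invoke Theorem \ref{homcom} to extract quasiconvexity of the $\varphi_i$ from homogeneous incompatibility, rather than trying to re-prove it.
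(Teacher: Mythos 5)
Your proposal is correct and is exactly the argument the paper intends: the paper's proof simply states that the corollary follows immediately from Theorems \ref{homcom} and \ref{com}, and your spelled-out deduction (forward direction from the trivial implication plus condition (i) of Theorem \ref{com}; converse by using Theorem \ref{homcom} to supply condition (ii) and the hypothesis to supply condition (i)) is precisely that reasoning made explicit.
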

\begin{proof}
This follows immediately from Theorems \ref{homcom}, \ref{com}.
\end{proof}
\begin{rem}\lbl{twograds}\rm
When $m=n=2$, Kirchheim \& Sz\'ekelyhidi   \cite{kirchheimszekelyhidi08}, using results from Faraco \& Sz\'ekelyhidi \cite{faracoszek}, show that two disjoint compact sets $K_1,K_2$ are incompatible if and only if $(K_1\cup K_2)^{\rm rc}$ is the disjoint union of $K_1^{\rm rc}$ and $K_2^{\rm rc}$, where $K^{\rm rc}$ denotes the rank-one convexification of a compact set $K\subset M^{m\times n}$ defined by $$K^{\rm rc}=\{A\in M^{m\times n}:\varphi(A)\leq\max_{B\in K}\varphi(B) \mbox{ for all finite rank-one convex }\varphi\}.$$ 
 They also show that $K_1, K_2$ are incompatible if and only if they are homogeneously incompatible, and if and only if they are incompatible for laminates. Since Sz\'ekelyhidi \cite{szekelyhidi2005} has provided a simple and algorithmically testable criterion for incompatibility of $K_1, K_2$ for laminates, this completely classifies incompatible compact subsets of $M^{2\times 2}$.  Using these results Heinz \cite{heinz} found necessary and sufficient conditions for incompatibility for compact sets $K_1, K_2\subset M^{2\times 2}$ that are left invariant under $SO(2)$ and consist of matrices with positive determinant.
\end{rem}

\subsection{Examples}

A necessary condition that $K_{1},\ldots,K_{N}$ be
homogeneously $L^\infty$ incompatible
is that there are no rank-one connections between any of the
$K_{r}$.  This follows from Lemma \ref{prop1} and the fact that quasiconvex
sets are rank-one convex.   However the absence of
such rank-one connections
is not sufficient (see the well-known Example \ref{4matrices} below).
 \begin{example} [Two matrices]\lbl{twomatrices}
 If $K_{1}=\{A\}, K_{2}=\{B\}$, where $A,B \in
M^{m\times n}$ with  $\rank(A-B)>1$,
then $K_{1},K_{2}$ are $L^p$ incompatible for any $p>1$.  We
give two proofs of
this fact.

\noindent{\it First proof}
Let $(\nu_{x})_{x \in \Omega}$ be an $L^{p}$ gradient Young measure with $\supp \nu_{x} \subset
\{A,B\}$ for a.e. $x \in \Omega$, i.e. $\nu_{x}=
\lambda(x)\delta_{A}+(1- \lambda
(x))\delta_{B}$ where $0 \leq \lambda(x) \leq 1$.  In
particular $\supp
\nu_{x}$ is contained in a bounded set for a.e. $x$, and so
 $(\nu_{x})_{x \in \Omega}$ is an $L^{\infty}$ gradient Young measure by Lemma \ref{zhanglemma}.  Thus
by the results
in  \cite{j32}, based on the weak
continuity of minors,
$\nu_{x}=\delta_{A}$ for a.e. $x \in \Omega$ or
$\nu_{x}=\delta_{B}$ for a.e. $x
\in \Omega$ as required.
\\
 \noindent{\it Second proof}
 This was communicated to us by V. \v{S}ver\'{a}k (see \cite{Sverak93a} and M\"uller \cite[Section 2.6]{muller99}).    Without loss of
generality we suppose that $A=0$
and define $h(D)=(\dist(D,L))^{2}$ for $D \in M^{m\times n}$,
where $L=\{tB;t\in\R\}$.  Thus
$$h(D)=|D|^{2}- \frac{(D \cdot B)^{2}}{ |B |^{2}}.$$
 $h$ is quadratic and strongly elliptic, since $tB$
is not rank-one for any $t$.
If $Dy^{(j)}$ is bounded in $L^{p}(\Omega; M^{m\times n})$ with
 $\supp\nu_{x}
\subset \{A,B\}$ then $Dh(Dy^{(j)}) \arr 0$ in
measure, and hence
$Dh(Dy^{(j)}) \arr 0$ strongly in $L^{s}
(\Omega;M^{m\times n})$ if $1<s<p$. So
$$\Div \ Dh(Dy^{(j)})= \Div \ f^{(j)}, \ x \in \Omega,$$
where $f^{(j)} \arr 0$ strongly in
$L^{s} (\Omega;M^{m\times n})$.  By elliptic
regularity theory  this implies that
$Dy^{(j)}$ is relatively compact
in $L_{\text{loc}}^{s}  (\Omega;M^{m\times n})$, so that 
$\nu_{x}=\delta_{Dy(x)}$ a.e. for some $y$ with $Dy(x)\in\{A,B\}$ almost everywhere. But elliptic regularity implies that $Dy$ is smooth, so that 
$\nu_x=\delta_A$ a.e. or $\nu_x=\delta_B$ a.e., as required.
\end{example}
\begin{example} [3 matrices]
\lbl{3matrices}
 Let $K_{1}=\{A_{1}\}, K_{2}=\{A_{2}\},
K_{3}=\{A_{3}\}$, where $A_{r} \in
M^{m\times n}$ with $\rank(A_{r}-A_{s})>1$ for $r \neq s$.  Then
$K_{1},K_{2},K_{3}$
are incompatible.  This is a consequence of a deep result of
\v{S}ver\'{a}k \cite{Sverak93a,sverak93e},  which uses in particular the result of Zhang \cite{zhang91}  that $K_{1},K_{2},K_{3}$ are gradient incompatible. See also the discussion after Corollary \ref{open}.
\end{example}
\begin{example}[4 matrices]
 \lbl{4matrices} Let $K_{r}=\{A_{r}\}, 1 \leq r \leq 4$, with
$\rank (A_{r}-A_{s})>1$ for $r \neq
s$. Then $K_{1},\ldots,K_{4}$ are not in general incompatible.  This follows
from the construction of \cite{Bhattetal94} that was motivated by
the example of  \cite{aumann86}
and Tartar \cite{tartar93}. Chleb\'ik \& Kirchheim \cite{chlebik} showed that $K_1,\ldots,K_4$ are nevertheless gradient incompatible.
\end{example}
\begin{example} [5 matrices]
\lbl{5matrices}
Let $K_{r}=\{A_{r}\}, 1 \leq r \leq 5$, with
$\rank (A_{r}-A_{s})>1$ for $r \neq
s$. Then $K_{1},\ldots,K_{5}$ are not in general gradient incompatible (Kirchheim \& Preiss \cite{kirchheim01,kirchheim03}).
\end{example}
\begin{example}[Incompatible energy wells
in $M^{2\times 2}$]
  Let $K_{r}= SO(2)U_{r}, 1 \leq r \leq N$, where
$U_{r}=U_{r}^{T}>0$ and there
are no rank-one connections between the different $K_{r}$.
Then
$K_{1},\ldots,K_{N}$ are incompatible.  This follows from the
result of Firoozye
\cite{Bhattetal94,firoozye90} and \v{S}ver\'{a}k \cite{sverak93e}.
\end{example}
\begin{example}[Incompatible energy wells
in $M^{3\times 3}$]
Let $K_{1}=SO(3)U_{1}, K_{2}=SO(3)U_{2}$, where
$U_{1}=U_{1}^{T}>O$,
$U_{2}=U_{2}^{T}>O$, and $\rank (A_{1}-A_{2})>1$ for all
$A_{1} \in K_{1},
A_{2} \in K_{2}$.  Then it is not known whether in general
$K_{1},K_{2}$
are incompatible.  However under stronger
conditions on $U_{1},U_{2}$
incompatibility is proved by Dolzmann, Kirchheim, M\"uller \& {\v{S}ver\'ak} \cite{dolzmann00}  
(see also Matos \cite{matos92} and Kohn, Lods \& Haraux \cite{kohnetal2000}). In this case incompatibility is
equivalent to the two-well rigidity estimate of Chaudhuri \& M\"uller \cite{chaudhurimuller2004}, as proved by De Lellis \& Sz\'ekelyhidi \cite{delellisszekelyhidi2006} using the transition layer technique from (earlier expositions of) the present paper. Chaudhuri \& M\"uller \cite{chaudhurimuller2006} used their rigidity estimate to study the scaling behaviour of thin martensitic films. 
If $K_{1},K_{2},K_{3}$ are three such energy wells
without rank-one connections
then it is shown in \cite{Bhattetal94} that $K_{1},K_{2},K_{3}$ need not
be incompatible,
using Example \ref{4matrices}.
\end{example}

\section{The transition layer estimate}
\lbl{trans}
In this section we suppose that $K_1,\ldots,K_N$ are disjoint compact subsets of $M^{m\times n}$. Given
$y\in W^{1,p}(\om;\R^m)$ and $\ep>0$ we consider for $r=1,\ldots,N$ the sets
$$\om_{r,\ep}(y):=\{x\in\om:Dy(x)\in N_\ep(K_r)\},$$
where
$$N_\ep(K):=\{ A\in M^{m\times n}:\dist(A,K)\leq\ep\},$$
and the corresponding `transition layer'
$$T_\ep(y):=\{ x\in\om:Dy(x)\not\in\bigcup_{r=1}^NN_\ep(K_r)\}.$$
The main result is
\begin{thm}
\lbl{transest}
Let $1<p<\infty$ and let $\om$ be $C$-connected. Then
$K_1,\ldots,K_N$ are incompatible if and only if  there exist constants $\ep_0>0$
and $\gamma>0$
 such that if $0\leq\ep<\ep_0$ and
$y\in W^{1,p}(\om;\R^m)$ then
\be
\lbl{4.1}
\int_{T_\ep(y)}(1+|Dy|^p)\,dx\geq\gamma\max_{1\leq r\leq N}\min(\mL^n(\om_{r,\ep}(y)),
\mL^n(\bigcup_{s\neq r}\om_{s,\ep}(y))).
\ee
The constant $\ep_0$ can be chosen to depend only on the eccentricity $E(C)$, the sets $K_1,\ldots, K_N$ and $p$,
while the constant $\gamma$ can be chosen to depend only on these quantities and $\mL^n(C)/\mL^n(\om)$.
\end{thm}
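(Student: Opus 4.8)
The plan is to prove the two implications separately. The easy direction is "estimate $\Rightarrow$ incompatibility": if $K_1,\dots,K_N$ are not incompatible, then by Definition \ref{ymd} there is an $L^\infty$ gradient Young measure $\nu$ with $\supp\nu_x\subset\bigcup_r K_r$ a.e., but with $\nu$ charging at least two of the $K_r$ on sets of positive measure. Picking a generating sequence $y^{(j)}$ with $Dy^{(j)}$ bounded in $L^\infty$, one checks that for small $\ep$ the transition-layer volume $\mL^n(T_\ep(y^{(j)}))\to 0$ as $j\to\infty$ (since $Dy^{(j)}\to\bigcup_r K_r$ in measure), while the right-hand side of \eqref{4.1} stays bounded below by a positive constant (two of the sets $\om_{r,\ep}(y^{(j)})$ have volume bounded below). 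Since $|Dy^{(j)}|$ is bounded, $\int_{T_\ep(y^{(j)})}(1+|Dy^{(j)}|^p)\,dx\to 0$, contradicting \eqref{4.1}. One small point is that $y^{(j)}\in W^{1,p}(\om;\R^m)$ is needed; this follows from Maz'ya's result cited in Section \ref{tp} because a $C$-connected domain satisfies the cone condition by Proposition \ref{cone}.

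The substantial direction is "incompatibility $\Rightarrow$ estimate", and this is where the bulk of the work lies. I would argue by contradiction: suppose no such $\ep_0,\gamma$ exist. Then there are sequences $\ep_k\downarrow\ep_\infty\geq 0$ and $y^{(k)}\in W^{1,p}(\om;\R^m)$ with
$$\int_{T_{\ep_k}(y^{(k)})}(1+|Dy^{(k)}|^p)\,dx < \tfrac1k\,\max_r\min\bigl(\mL^n(\om_{r,\ep_k}(y^{(k)})),\mL^n(\textstyle\bigcup_{s\neq r}\om_{s,\ep_k}(y^{(k)}))\bigr).$$
After normalization the right-hand side is bounded by $\mL^n(\om)$, so the transition-layer energies tend to zero; in particular $\mL^n(T_{\ep_k}(y^{(k)}))\to 0$ and, by equi-integrability forced by the $|Dy^{(k)}|^p$ bound on $T_{\ep_k}$, one extracts a gradient Young measure $\nu=(\nu_x)$ whose support lies in $\bigcup_r N_{\ep_\infty}(K_r)$ a.e. If $\ep_\infty=0$ this support lies in $\bigcup_r K_r$, so by incompatibility $\supp\nu_x\subset K_{i}$ for a single $i$; I would then need to show this forces, for large $k$, that one of $\om_{r,\ep_k}(y^{(k)})$ with $r\neq i$ has vanishingly small measure, i.e. the "min" on the right-hand side tends to zero faster than the left-hand side — but that is exactly what may fail, and is the delicate case flagged in the introduction.

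To handle the delicate case, I would follow the strategy sketched in the introduction: rather than working globally, one localizes to a congruent copy $C'=\xi+QC$ of $C$ inside $\om$, and uses the freedom to move and rescale $C'$ within $\om$ (available because $\om$ is $C$-connected) to find a position where the rescaled gradient captures a controlled, non-vanishing fraction — say exactly one half — of the support near $K_i$ and near $\bigcup_{s\neq i}K_s$. Concretely, on such a copy the blow-up $z(x)=\lambda^{-1}y^{(k)}(\xi+\lambda Qx)$ (with $\lambda$ the relevant scale of $C$) has gradient with the same two-sided nontrivial support structure, and incompatibility — applied now on the fixed domain $\tfrac1\lambda Q^T(C'-\xi)=C$, using that incompatibility is domain-independent (Remark \ref{rem1}) — is contradicted in the limit, provided the transition-layer energy on $C'$ is also small. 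The point of demanding equal halves is that it lets one apply the Vitali covering lemma (Lemma \ref{stein}): one covers the "bad" part of $\om$ (where both $\om_{i,\ep_k}$ and $\bigcup_{s\neq i}\om_{s,\ep_k}$ have substantial density) by such balls/copies of $C$, extracts a disjoint subfamily whose total measure is $\geq c_n$ times that of the bad set, and sums the transition-layer estimates over the subfamily. This yields a lower bound on $\int_{T_{\ep_k}(y^{(k)})}(1+|Dy^{(k)}|^p)\,dx$ proportional to $\mL^n(C)/\mL^n(\om)$ times the right-hand side of \eqref{4.1}, contradicting our assumption. The main obstacle — and the technical heart of the proof — is the continuity/intermediate-value argument showing one can always slide and shrink $C'$ so as to split the two supports into prescribed fractions while keeping the transition-layer contribution on $C'$ under control; this requires the volume-fraction-varying construction the authors defer to \cite{u6}, and careful bookkeeping of how $\ep_0$ depends only on $E(C)$, the $K_r$, and $p$, and how $\gamma$ additionally depends on $\mL^n(C)/\mL^n(\om)$.
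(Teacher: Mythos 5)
Your skeleton matches the paper's architecture — easy direction by testing the estimate against a generating sequence, hard direction by localizing to congruent copies of $C$, balancing volume fractions, and summing with the Vitali lemma — but the two steps that constitute the actual proof are missing. First, the step you call the main obstacle and defer to \cite{u6} is carried out in the paper in a few lines, and you supply no substitute: in the delicate case where for some $i$ every copy $D$ in the covering equivalence class $\mC$ satisfies $\mL^n(\{x\in D:Dy(x)\in N_\ep(K_i)\})<\frac{1}{4}\mL^n(C)$, one takes a Lebesgue point $\tilde x$ of $\om_{i,\ep}(y)$, a copy $\tilde C(\tilde x)\in\mC$ containing it, and shrinks it towards $\tilde x$ via $\tilde C_r(\tilde x)=r\tilde C(\tilde x)+(1-r)\tilde x$; the phase-$i$ fraction is continuous in $r$, is $<\frac14$ at $r=1$ and tends to $1$ as $r\to 0$, so the intermediate value theorem yields a copy with fraction exactly $\frac12$. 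No machinery from \cite{u6} is needed. You also omit the complementary case in which two copies carry a $\geq\frac14$ fraction of phase $1$ and of phase $2$ respectively; the paper handles it by sliding one copy to the other along a congruent-connection path (this is exactly where $C$-connectedness is used) and applying the intermediate value theorem to the fraction functions $\theta_i(t)$, together with the trivial case $\mL^n(T_\ep(y))\geq\frac14\mL^n(C)$.

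Second, the summation over the disjoint Vitali subfamily requires a quantitative, uniform local lower bound that your sketch does not provide and that your ``contradiction in the limit, provided the transition-layer energy on $C'$ is also small'' cannot replace: nothing guarantees that the transition-layer energy restricted to a chosen copy is small relative to its volume, and if it is not, your localized compactness argument yields nothing. The paper's Lemma \ref{translem} settles this once and for all: there exist $\ep_0,\gamma_0>0$ depending only on $E(C)$, $K_1,K_2$ and $p$ such that on every bounded open convex set of eccentricity $\leq E(C)$ whose phase-$i$ fraction lies in $[\frac14,\frac34]$ the transition-layer energy is at least $\gamma_0$ times its volume; the proof is a compactness argument (normalize the convex sets, pass to a Hausdorff limit, extract a Young measure supported in $K_1\cup K_2$ charging both sets, contradicting incompatibility). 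With this uniform $\gamma_0$, the Vitali family gives the bound $\gamma_0c_n(1-E^2)^{n/2}\mL^n(\om_{i,\ep})$ and the stated dependence of the constants. Note too that the paper's hard direction is a direct three-case argument, not a global contradiction; your sequences $\ep_k\downarrow\ep_\infty$ would, for $\ep_\infty>0$, need incompatibility of the $\ep$-neighbourhoods, which in the paper is a corollary of the estimate rather than an input, and the reduction to $N=2$ (applying the two-set result to $K_r$ and $\bigcup_{s\neq r}K_s$) replaces your direct treatment of general $N$. Your easy direction is essentially the paper's.
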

\begin{rem}
\lbl{rem3.1}\rm
An alternative way of writing the right-hand side of \eqref{4.1} is
$$\gamma\min(\mL^n(\om_{\bar r,\ep}(y)),\sum_{r\neq\bar r}\mL^n(\om_{r,\ep}(y))),$$
where $\bar r=\bar r(\ep,y)$ is such that
$$\mL^n(\om_{\bar r,\ep}(y))=\max_{1\leq r\leq N}\mL^n(\om_{r,\ep}(y)).$$
To see this, fix $\ep$ and $y$ and let $a_r=\mL^n(\om_{r,\ep}(y))$. Suppose without
loss of generality that $a_N\geq a_{N-1}\geq\ldots\geq a_1$ and let $c=\sum_{r=1}^Na_r$. Then we
have to show that
$$\max_{1\leq r\leq N}\min(a_r,c-a_r)=\min(a_N,c-a_N).$$
But this follows from the fact that $a_r\leq c-a_N$ if $1\leq r<N$.
\end{rem}
We state the case $N=2$ of Theorem \ref{transest} separately.
\begin{thm}
\lbl{transest2}
Let $1<p<\infty$ and let $\om$ be $C$-connected. Two disjoint compact sets $K_1,K_2$ are incompatible  if and only if there exist
constants $\ep_0>0$ and $\gamma>0$ such that
if $0\leq \ep<\ep_0$ and $y\in W^{1,p}(\om;\R^m)$ then
\be
\lbl{4.2}
\int_{T_\ep(y)}(1+|Dy|^p)\,dx\geq\gamma\min(\mL^n(\om_{1,\ep}(y)),\mL^n(\om_{2,\ep}(y))).
\ee
The constant $\ep_0$ can be chosen to depend only on $E(C), K_1, K_2$ and $p$,
while the constant $\gamma$ can be chosen to depend only on these quantities and $\mL^n(C)/\mL^n(\om)$.
\end{thm}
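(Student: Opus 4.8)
The plan is to prove the two implications of Theorem~\ref{transest2} separately. The reverse implication (estimate $\Rightarrow$ incompatibility) is routine, and I would argue it by contradiction: assume \eqref{4.2} holds for some $\ep_0,\gamma>0$ but the disjoint compact sets $K_1,K_2$ (whose incompatibility is then independent of $p$) are not incompatible. Then there is an $L^\infty$ gradient Young measure $(\nu_x)_{x\in\om}$ with $\supp\nu_x\subset K_1\cup K_2$ a.e.\ while both $G_i:=\{x:\supp\nu_x\cap K_i\ne\emptyset\}$ have positive measure; as $K_1,K_2$ are disjoint and compact, $\nu_x(K_i)>0$ for a.e.\ $x\in G_i$. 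Generating $(\nu_x)$ by gradients $Dy^{(j)}$ bounded in $L^\infty$ (Lemma~\ref{zhanglemma}), one has $\dist(Dy^{(j)},K_1\cup K_2)\to0$ in measure, so $\mL^n(T_\ep(y^{(j)}))\to0$ and hence $\int_{T_\ep(y^{(j)})}(1+|Dy^{(j)}|^p)\,dx\to0$ for each small fixed $\ep$; testing $Dy^{(j)}$ against a continuous function equal to $1$ on $K_i$ and supported near $K_i$ gives $\liminf_j\mL^n(\om_{i,\ep}(y^{(j)}))\ge\int_{G_i}\nu_x(K_i)\,dx>0$. Inserting this into \eqref{4.2} and letting $j\to\infty$ yields $0\ge\gamma\cdot(\text{positive constant})$, a contradiction.

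For the forward implication I would suppose $K_1,K_2$ incompatible but \eqref{4.2} false: for each $\ell$ there would be $\ep_\ell<1/\ell$ and $y_\ell\in W^{1,p}(\om;\R^m)$ with
$$\int_{T_{\ep_\ell}(y_\ell)}(1+|Dy_\ell|^p)\,dx<\tfrac1\ell\,m_\ell,\qquad m_\ell:=\min\big(\mL^n(\om_{1,\ep_\ell}(y_\ell)),\,\mL^n(\om_{2,\ep_\ell}(y_\ell))\big)>0,$$
and, after passing to a subsequence, I may assume $m_\ell=\mL^n(\om_{2,\ep_\ell}(y_\ell))\le\mL^n(\om_{1,\ep_\ell}(y_\ell))$ and that $N_{\ep_\ell}(K_1),N_{\ep_\ell}(K_2)$ are disjoint. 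A first compactness step shows $m_\ell\to0$: otherwise $\mL^n(T_{\ep_\ell}(y_\ell))\to0$ and $\int_{T_{\ep_\ell}(y_\ell)}|Dy_\ell|^p\to0$, so $Dy_\ell$ is equi-integrable and generates (along a subsequence) a gradient Young measure supported in $K_1\cup K_2$, which by incompatibility is supported in a single $K_i$; then $Dy_\ell\to K_i$ in measure, forcing $\mL^n(\om_{2,\ep_\ell})\to0$ (if $i=1$), or $\mL^n(\om_{1,\ep_\ell})\to0$ and hence $m_\ell\to0$ (if $i=2$) --- in either case a contradiction. So the situation reduces to the delicate regime $m_\ell\to0$, in which the product phase occupies vanishing volume.

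The core would be a scale-invariant local estimate, which I would prove again by compactness and contradiction: there should be $c_*>0$, $\delta_0>0$, depending only on $K_1,K_2,p$ and $E(C)$, such that for every $Q\in SO(n)$, every $0\le\delta<\delta_0$ and every $z\in W^{1,p}(C;\R^m)$ with $\mL^n(\{x\in C:Dz(x)\in N_\delta(K_iQ)\})\ge\tfrac13\mL^n(C)$ for $i=1,2$, one has $\int_{\{x\in C:\,Dz(x)\notin N_\delta(K_1Q)\cup N_\delta(K_2Q)\}}(1+|Dz|^p)\,dx\ge c_*$. Indeed a violating sequence would yield $Q_j\to Q^*\in SO(n)$, $\delta_j\to0$ and a gradient Young measure on $C$ supported in $K_1Q^*\cup K_2Q^*$; since incompatibility is preserved under right multiplication by a rotation (apply the change of variables $x\mapsto Q^Tx$ to the generating sequence), this measure would have to be supported in one of $K_1Q^*$, $K_2Q^*$, contradicting the two measure lower bounds, which pass to the limit. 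Compactness of $SO(n)$ would make $c_*,\delta_0$ independent of $Q$.

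The covering argument would then finish the proof. Fix $\ell$ large, so $\ep_\ell<\delta_0$ and $m_\ell<\tfrac12\mL^n(C)$. Since $\om$ is $C$-connected it is $C$-filled, so each $x\in\om$ lies in a copy $D_x=\eta_x+Q_xC\subset\om$; by Proposition~\ref{c6a} the convex sets $(1-\mu)x+\mu D_x$, $0<\mu\le1$, all contain $x$ and shrink to $x$ with bounded eccentricity $E(C)$, so at a density point $x$ of $\om_{2,\ep_\ell}(y_\ell)$ the fraction of $(1-\mu)x+\mu D_x$ occupied by $\om_{2,\ep_\ell}(y_\ell)$ tends to $1$ as $\mu\to0$, while at $\mu=1$ it is $\le m_\ell/\mL^n(C)<\tfrac12$; the intermediate value theorem then yields $\mu_x$ with $\mL^n(\om_{2,\ep_\ell}(y_\ell)\cap C_x)=\tfrac12\mL^n(C_x)$, where $C_x:=(1-\mu_x)x+\mu_x D_x$ is a rotated rescaled copy of $C$ with $x\in C_x\subset\om$. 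I would then apply the Vitali covering lemma (Lemma~\ref{stein}) to the circumscribed balls of the $C_x$ over the density points of $\om_{2,\ep_\ell}(y_\ell)$ to select disjoint $C_{x_k}$ with $\sum_k\mL^n(C_{x_k})\ge c\,m_\ell$, $c=c(n,E(C))>0$. Rescaling $y_\ell$ from $C_{x_k}$ onto $C$ gives $z_k$ with $\mL^n(\{Dz_k\in N_{\ep_\ell}(K_2Q_{x_k})\})=\tfrac12\mL^n(C)$ and transition-layer energy on $C$ equal to $\mu_{x_k}^{-n}\int_{T_{\ep_\ell}(y_\ell)\cap C_{x_k}}(1+|Dy_\ell|^p)\,dx$; if the $N_{\ep_\ell}(K_1Q_{x_k})$-set also has measure $\ge\tfrac13\mL^n(C)$ the local estimate bounds this below by $c_*$, and otherwise the transition layer of $z_k$ has measure $>\tfrac16\mL^n(C)$, giving a bound of the same type, so in both cases $\int_{T_{\ep_\ell}(y_\ell)\cap C_{x_k}}(1+|Dy_\ell|^p)\,dx\ge c'\mL^n(C_{x_k})$ for a fixed $c'>0$. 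Summing over $k$, by disjointness and the failure hypothesis, $\tfrac1\ell m_\ell>\int_{T_{\ep_\ell}(y_\ell)}(1+|Dy_\ell|^p)\,dx\ge c'\sum_k\mL^n(C_{x_k})\ge c'c\,m_\ell$, so $1/\ell>c'c>0$, which is impossible for $\ell$ large; careful bookkeeping of the constants (and the invariance of \eqref{4.2} under rescaling $\om$) then gives the stated dependence of $\ep_0,\gamma$. The hard part is precisely the regime $m_\ell\to0$: no single rescaling need capture a definite proportion of the tiny set $\om_{2,\ep_\ell}(y_\ell)$, so the balanced ``half-and-half'' configuration must be manufactured locally around each density point and the scale-invariant local bounds summed through the covering --- which is also what forces the intermediate-value construction and the rotation-equivariance of incompatibility into the proof.
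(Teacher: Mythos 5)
Your reverse implication is fine and matches the paper's sufficiency argument, and your treatment of the delicate regime where the minority phase is small is essentially the paper's own: density points of $\om_{i,\ep}$, shrinking a congruent copy of $C$ by the intermediate value theorem until the minority phase occupies exactly half of it, the Vitali covering lemma on the circumscribed balls, and a compactness lemma on a reference convex set (your fixed-$C$, rotated-wells version plays the role of Lemma \ref{translem}; working with $K_iQ$ instead of varying the convex body is a legitimate variant for this purpose). The local case distinction (majority phase fraction $\geq\tfrac13$ versus transition layer of measure $>\tfrac16\mL^n(C)$) and the summation over the disjoint copies are correct.

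The genuine gap is in the complementary regime, where both $\om_{1,\ep}$ and $\om_{2,\ep}$ have measure comparable to $\mL^n(C)$ (your ``$m_\ell\not\to 0$'' branch). You dispose of it by a soft Young-measure compactness argument on the fixed domain $\om$, which proves only that \emph{some} $\ep_0,\gamma>0$ exist for that particular $\om$; it is intrinsically non-quantitative and cannot deliver the dependence asserted in the theorem, namely that $\ep_0$ depends only on $E(C),K_1,K_2,p$ and $\gamma$ additionally only on $\mL^n(C)/\mL^n(\om)$. Worse, in this branch (and throughout) you use only that $\om$ is $C$-filled, never the congruent-connectedness built into Definition \ref{c6}; but the uniform estimate is false for merely $C$-filled domains, so no bookkeeping or rescaling can repair your route. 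Concretely, for the domain $\om^\alpha$ of Example \ref{c7} (two unit balls overlapping in a thin lens), which is $B(0,1)$-filled with $E=0$ and volume ratio bounded away from $0$ uniformly in $\alpha$, one may take $Dy$ near $A_1$ in one ball and near $A_2$ in the other with the transition confined to a neighbourhood of the lens of measure $O(\alpha^{n/2})$ and bounded gradient, while both phase sets have measure of order one; hence any admissible $\gamma$ must degenerate as $\alpha\to 0$, exactly in the regime your compactness step is meant to cover. The paper handles this regime quantitatively by the sliding argument: if some congruent copy has phase-$1$ fraction $\geq\tfrac14$ and another has phase-$2$ fraction $\geq\tfrac14$, move one copy to the other continuously inside $\om$ (this is where $C$-connectedness is used) and apply the intermediate value theorem to the two fraction functions to find a single copy with both fractions $\geq\tfrac14$, to which Lemma \ref{translem} applies; the remaining cases are the trivial bound $\mL^n(T_\ep(y))\geq\tfrac14\mL^n(C)$ and your covering case. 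You need this (or an equivalent quantitative mechanism using congruent connectedness) to obtain the stated constants. A secondary point of the same kind: your claim that $c_*,\delta_0$ depend only on $E(C)$ is not justified by compactness over $SO(n)$ with the single fixed body $C$; to get eccentricity-only dependence one must let the convex body vary in the compactness argument, as in Lemma \ref{translem} (Hausdorff convergence of normalized bodies and an inner convex approximation), although for your application to similar copies of one fixed $C$ this only affects the dependence statement, not existence.
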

Note that Theorem \ref{transest} follows from Theorem \ref{transest2} by applying it to the pair of sets
$K_r$ and $\bigcup_{s\neq r}K_s$ for each $r$, remarking that the set $T_\ep(y)$ is the same for each $r$,
and applying Remark \ref{rem1}a. It therefore suffices to prove Theorem \ref{transest2}. We use the following
lemma.

\begin{lem}
\lbl{translem}
Let $0\leq E<1$, and let $K_1,K_2$ be incompatible.  Then there exist constants $\ep_0=\ep_0(E,K_1,K_2,p)>0$ and $\gamma_0=\gamma_0(E,K_1,K_2,p)>0$
 such that
if $\tilde C\subset\R^n$ is any bounded open convex set with $E(\tilde C)\leq E$ and if $0\leq \ep<\ep_0$, $y\in W^{1,p}(\tilde C;\R^m)$, with
for some $i=1,2$
$$\frac{3}{4}\mL^n(\tilde C)\geq\mL^n(\{x\in\tilde C:Dy(x)\in N_\ep(K_i)\})\geq\frac{1}{4}\mL^n(\tilde C),$$
then
$$\int_{T_{\ep,\tilde C}(y)}(1+|Dy|^p)\,dx\geq\gamma_0\mL^n(\tilde C),$$
where $T_{\ep,\tilde C}(y):=\{ x\in \tilde C:Dy(x)\not\in N_\ep(K_1)\cup N_\ep(K_2)\}$.
\end{lem}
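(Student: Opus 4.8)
The plan is to argue by contradiction, using a compactness/blow-up argument on the convex set $\tilde C$, reduced to a fixed reference domain by an affine change of variables, and then invoke the incompatibility of $K_1, K_2$ together with Lemma \ref{zhanglemma} to extract a contradiction. First I would observe that any bounded open convex set $\tilde C$ with $E(\tilde C)\le E<1$ can, after translation, be written as $\tilde C = M D$ where $D$ is a fixed bounded convex set (say the unit ball, or the minimal enclosing ball rescaled) and $M$ is an invertible linear map whose condition number is bounded in terms of $E$ alone; concretely, the ratio $R(\tilde C)/r(\tilde C) = (1-E(\tilde C)^2)^{-1/2} \le (1-E^2)^{-1/2}$, so after dividing by $R(\tilde C)$ the rescaled set contains $B(b,(1-E^2)^{1/2})$ and is contained in $B(a,1)$. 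Pulling $y$ back to this normalized domain via $z(\xi) = R(\tilde C)^{-1} y(a + R(\tilde C)\xi)$ changes $Dy$ only by a bounded invertible factor on the left and the Lebesgue measures only by the fixed power $R(\tilde C)^n$, so the integral inequality, the volume-fraction hypothesis, and the definition of the transition layer all transform in a controlled way. Thus it suffices to prove the estimate for sets $\tilde C$ lying between two concentric balls of radii $(1-E^2)^{1/2}$ and $1$.

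**The blow-up.** Suppose the conclusion fails: then for some $E<1$ and sequences $\ep_k\downarrow 0$, convex sets $\tilde C_k$ with $E(\tilde C_k)\le E$ (normalized as above), and maps $y^{(k)}\in W^{1,p}(\tilde C_k;\R^m)$ satisfying the volume-fraction hypothesis for some fixed $i$ (pass to a subsequence so $i$ is constant), we have $\int_{T_{\ep_k,\tilde C_k}(y^{(k)})}(1+|Dy^{(k)}|^p)\,dx \to 0$. By a further subsequence the normalized domains $\tilde C_k$ converge (in Hausdorff distance, and their indicator functions in $L^1$) to a limiting bounded convex set $\tilde C_\infty$ with $E(\tilde C_\infty)\le E$, still trapped between the two concentric balls — in particular $\mL^n(\tilde C_\infty)\ge c(E)>0$. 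Normalizing $y^{(k)}$ by an additive constant, the bound on $\int(1+|Dy^{(k)}|^p)$ over $T_{\ep_k}$ plus the fact that $Dy^{(k)}$ is uniformly bounded on the complement (it lies in the compact set $N_{\ep_0}(K_1)\cup N_{\ep_0}(K_2)$ there) gives equi-integrability of $Dy^{(k)}$; so a subsequence of $Dy^{(k)}$ (extended suitably, or restricted to any fixed subdomain compactly contained in all the $\tilde C_k$ for $k$ large) generates an $L^p$ gradient Young measure $\nu=(\nu_\xi)$ on $\tilde C_\infty$. Because the measure of $T_{\ep_k,\tilde C_k}(y^{(k)})$ tends to zero and $\ep_k\to 0$, we get $\mathrm{dist}(Dy^{(k)},K_1\cup K_2)\to 0$ in measure, hence $\supp\nu_\xi\subset K_1\cup K_2$ for a.e.\ $\xi$. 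By Lemma \ref{zhanglemma}, $\nu$ is an $L^\infty$ gradient Young measure, and since $K_1,K_2$ are incompatible, either $\supp\nu_\xi\subset K_1$ a.e.\ or $\supp\nu_\xi\subset K_2$ a.e.

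**Deriving the contradiction from the volume fractions.** It remains to show that neither alternative is consistent with the hypothesis that the $i$-th phase occupies between a quarter and three quarters of $\tilde C_k$. The function $A\mapsto \mathrm{dist}(A,K_i)$ is continuous and bounded on the relevant compact set, so the weak-$L^1$ convergence $\mathrm{dist}(Dy^{(k)},K_i)\rightharpoonup \langle\nu_\xi,\mathrm{dist}(\cdot,K_i)\rangle$ holds; integrating over $\tilde C_\infty$, if $\supp\nu_\xi\subset K_i$ a.e.\ then the right side is zero, whereas $\int_{\tilde C_k}\mathrm{dist}(Dy^{(k)},K_i)\,dx$ is bounded below by $\ep_k$ times the measure of the complement of $N_{\ep_k}(K_i)$, which is at least $\tfrac14\mL^n(\tilde C_k)-$(measure of the other phase's neighbourhood)$\ge c(E)>0$ up to the transition layer — more cleanly, using that $\mL^n(\om_{i,\ep_k}(y^{(k)}))\le \tfrac34\mL^n(\tilde C_k)$ and $\mL^n(T_{\ep_k})\to 0$, the other phase's neighbourhood has measure $\ge \tfrac14\mL^n(\tilde C_k)-o(1)$, so a definite fraction of $\tilde C_\infty$ has $Dy^{(k)}$ near $K_j$, $j\ne i$, forcing $\nu_\xi$ to charge a neighbourhood of $K_j$ on a set of positive measure, contradicting $\supp\nu_\xi\subset K_i$. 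The symmetric computation rules out $\supp\nu_\xi\subset K_j$ for $j\ne i$ using the lower bound $\mL^n(\om_{i,\ep_k}(y^{(k)}))\ge\tfrac14\mL^n(\tilde C_k)$. Finally one checks that the constants $\ep_0,\gamma_0$ produced this way depend only on $E$ (through the geometry of the normalized domain), on $K_1,K_2$ (through the separation $\mathrm{dist}(K_1,K_2)>0$ and their diameters) and on $p$; no dependence on $\tilde C$ beyond its eccentricity survives the normalization.

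**Main obstacle.** The delicate point is the passage to the limit on a sequence of *varying* domains $\tilde C_k$: one must be careful that the generated Young measure is genuinely defined on a fixed domain of positive measure and that the volume-fraction information survives the limit. Handling this is exactly where the eccentricity bound is used — it guarantees, after normalization, a uniform interior ball $B(b,(1-E^2)^{1/2})$ on which all the analysis can be localized, so that the Hausdorff limit $\tilde C_\infty$ is non-degenerate. The rest is a routine compactness argument combined with the definition of incompatibility.
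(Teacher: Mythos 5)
Your proposal is essentially the paper's own proof: a contradiction/compactness argument in which one normalizes by the radius of the minimal enclosing ball, uses the eccentricity bound $E(\tilde C)\leq E$ to obtain a nondegenerate Hausdorff limit of the convex domains, localizes to a fixed subdomain compactly contained in the limit (the paper's set $G$, chosen to miss less than $\tfrac18$ of the measure so that both volume fractions persist, giving $\int_G\varphi_i(Dz^{(j)})\,dx\geq\tfrac18\mL^n(G)$ for suitable cut-offs $\varphi_1,\varphi_2$), generates an $L^p$ gradient Young measure supported in $K_1\cup K_2$, and invokes incompatibility to contradict the persistence of both phases. The only caveat is cosmetic: your first attempt at the final contradiction via integrating $\dist(Dy^{(k)},K_i)$ fails because the lower bound carries a factor $\ep_k\to 0$, but the ``cleaner'' volume-fraction argument you substitute is exactly the one in the paper.
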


\begin{proof}
Suppose not. Then for $j=1,2,\ldots$ there exist $\ep^{(j)}\leq 1/j$, bounded open convex sets $C^{(j)}\subset\R^n$ with
$E(C^{(j)})\leq E$ and mappings
$y^{(j)}\in W^{1,p}(C^{(j)};\R^m)$ with for some $i=1,2$ (independent of $j$)
\be
\lbl{4.2aa}
\frac{3}{4}\mL^n( C^{(j)})\geq\mL^n(\{x\in C^{(j)}:Dy^{(j)}(x)\in N_{\ep^{(j)}}(K_i)\})\geq\frac{1}{4}
\mL^n(C^{(j)}),
\ee
and
$$\int_{T_{\ep^{(j)},C^{(j)}}(y^{(j)})}(1+|Dy^{(j)}|^p)\,dx\leq\frac{1}{j}\mL^n(C^{(j)}).$$
For definiteness we suppose \eqref{4.2aa} holds for $i=1$.  Let $B(a^{(j)},R_j)$ be the unique minimal ball containing $C^{(j)}$, so that $R_j=R(C^{(j)})$. We normalize $C^{(j)}$ by setting
\be
\lbl{4.2ab}
\tilde C^{(j)}=\frac{1}{R_j}(C^{(j)}-a^{(j)}).
\ee
Thus $R(\tilde C^{(j)})=1$ and $B(0,1)$ is the unique minimal ball containing $\tilde C^{(j)}$.
  Define
$z^{(j)}\in W^{1,p}(\tilde C^{(j)};\R^m)$ by
\be
\lbl{4.2ac}
z^{(j)}(x)=\frac{1}{R_j}y^{(j)}(a^{(j)}+R_jx).
\ee
Then
\be
\lbl{4.2ad}
Dz^{(j)}(x)=Dy^{(j)}(a^{(j)}+R_jx)
\ee
and we have that
\be
\lbl{4.2a}
\frac{3}{4}\mL^n(\tilde C^{(j)})\geq\mL^n(\{x\in \tilde C^{(j)}:Dz^{(j)}(x)\in N_{\ep^{(j)}}(K_1) \})
\geq\frac{1}{4}
\mL^n(\tilde C^{(j)})
\ee
and
\be
\lbl{4.3}
\int_{T_j}(1+|Dz^{(j)}|^p)\,dx\leq\frac{1}{j}\mL^n(\tilde C^{(j)}),
\ee
where $T_j:=\{x\in \tilde C^{(j)}:Dz^{(j)}(x)\not\in N_{\ep^{(j)}}(K_1) \cup N_{\ep^{(j)}}(K_2)\}.$
Since the closures $D^{(j)}$ of $\tilde C^{(j)}$ lie in $\overline{B(0,1)}$, a subsequence
(which we do not relabel) of the $D^{(j)}$ converges in the
Hausdorff metric to a closed convex set $D\subset \overline{B(0,1)}$. Since $E(\tilde C^{(j)})=E(C^{(j)})\leq E$,
there is a closed ball $B_j$ contained in $D^{(j)}$ with radius at least $\sqrt{1-E^2}$. We can suppose that
these balls also converge to a ball $B\subset D$ of radius at least $\sqrt{1-E^2}>0$, and hence
$D$ has nonempty interior $\tilde C$. Note that $\mL^n(\tilde C^{(j)})\arr\mL^n(\tilde C)$. Let $G$ be open and convex with $\bar G\subset\tilde C$ and $\mL^n(\tilde C\backslash G)<\frac{1}{8}\mL^n(\tilde C)$. Then for sufficiently large $j$ we have $G\subset\tilde C^{(j)}$. Hence, for sufficiently large $j$, 
\be \label{4.3aa}
\mL^n(\tilde C^{(j)})<\frac{8}{7}\mL^n(G).
\ee
 Also, by \eqref{4.2a},
\be
\lbl{4.3a}\nonumber
\mL^n(\{x\in \tilde C^{(j)}:Dz^{(j)}(x)\in N_{\ep^{(j)}}(K_1)\})\hspace{-2in}\\ \nonumber&\geq &\mL^n(\{x\in G:Dz^{(j)}(x)\in N_{\ep^{(j)}}(K_1)\})\\
\nonumber
&\geq &\mL^n(\{x\in \tilde C^{(j)}:Dz^{(j)}(x)\in N_{\ep^{(j)}}(K_1)\})-\mL^n(\tilde C^{(j)}\backslash G)\\ \nonumber
&\geq &\frac{1}{4}\mL^n(\tilde C^{(j)})-\mL^n(\tilde C^{(j)}\backslash G)\\
&\geq & \frac{1}{8}\mL^n(G).
\ee
Hence, combining \eqref{4.3aa}, \eqref{4.3a} and the left-hand inequality in \eqref{4.2a}, we have
\be
\lbl{4.3b}
\frac{6}{7}\mL^n(G)\geq\mL^n(\{x\in G:Dz^{(j)}(x)\in N_{\ep^{(j)}}(K_1)\})\geq\frac{1}{8}\mL^n(G).
\ee

Since $K_1,K_2$ are bounded, it follows in particular from \eqref{4.3} that $Dz^{(j)}$ is bounded in
$L^p(G;M^{m\times n})$, and so we may assume that $Dz^{(j)}$ generates a Young measure
$(\nu_x)_{x\in G}$.
Let $U_1, U_2$ be open neighbourhoods of $K_1, K_2$ respectively. Since
$\{ x\in G:Dz^{(j)}(x)\not\in U_1\cup U_2\}\subset T_j$ for sufficiently large $j$, and $\mL^n(T_j)\arr 0$, we have that
$Dz^{(j)}(x)\arr K_1\cup K_2$ in measure, and hence $\supp \nu_x\subset K_1\cup K_2$ for a.e.
$x\in G$. Since $K_1,K_2$ are incompatible we thus have either $\supp \nu_x\subset K_1$
 a.e. or $\supp\nu_x\subset K_2$ a.e. in $G$. Now let $\varphi_i:M^{m\times n}\arr[0,1]$,
$i=1,2$, be continuous functions such that $\varphi_i=1$ on $N_{\delta/2}(K_i),$
$\varphi_i=0$ outside $N_\delta(K_i),$ where $\delta>0$ is sufficiently small so that
$N_\delta(K_1)\cap N_\delta(K_2)$ is empty. Then from \eqref{4.3b} we have that
\be
\lbl{4.4}
\int_{G}\varphi_1(Dz^{(j)})\,dx\geq\frac{1}{8}\mL^n(G)
\ee
for all sufficiently large $j$. Since for sufficiently large $j$
\be
\nonumber
\mL^n(\{x\in G:Dz^{(j)}(x)\in N_{\ep^{(j)}}(K_2)\})\hspace{2.5in}\\ \hspace{.5in}
\geq\mL^n(G)-\mL^n(\{x\in G:Dz^{(j)}(x)\in N_{\ep^{(j)}}(K_1)\})-\mL^n(T_j),\nonumber
\ee
we have from \eqref{4.2a}, \eqref{4.3} that for sufficiently large $j$
\be
\nonumber
\mL^n(\{ x\in G:Dz^{(j)}(x)\in N_{\delta/2}(K_2)\})\geq \frac{1}{7}\mL^n(G)-\frac{1}{j}\mL^n(\tilde C^{(j)})
\ee
and thus
\be
\lbl{4.4a}
\int_{G}\varphi_2(Dz^{(j)}(x))\, dx\geq
 \frac{1}{8}\mL^n(G).
\ee
But
$$\lim_{j\arr\infty}\int_{G}\varphi_i(Dz^{(j)})\,dx=\int_{G}\langle \nu_x,\varphi_i\rangle\, dx$$
for $i=1,2$, and one of these integrals is zero, contradicting \eqref{4.4}, \eqref{4.4a}.
\end{proof}
 {\it Proof of Theorem \ref{transest2}}\\
{\it Sufficiency.} Let $Dy^{(j)}$ be bounded in $L^\infty(\om;M^{m\times n})$ and have Young measure
$(\nu_x)_{x\in\om}$ with $\supp\nu_x\subset K_1\cup K_2$ almost everywhere. Choose $\ep\in(0,\ep_0)$ sufficiently
small so that $N_\ep(K_1), N_\ep(K_2)$ are disjoint. Then since $Dy^{(j)}\arr K_1\cup K_2$ in measure
we have $\lim_{j\arr\infty}\mL^n(T_\ep(y^{(j)}))=0$ and hence by \eqref{4.2}
\be
\lbl{4.4a0}
\min(\mL^n(\om_{1,\ep}(y^{(j)})),\mL^n(\om_{2,\ep}(y^{(j)})))\arr 0.
\ee
Let $f:M^{m\times n}\arr [0,1]$ be continuous with $f=1$ on $K_1$, $f=0$ outside $N_\ep(K_1)$. Then
\be
\lbl{4.4a1}
\lim_{j\to\infty}\av_\om f(Dy^{(j)})\,dx=\av_\om \langle\nu_x,f\rangle\,dx=\av_\om\nu_x(K_1)\,dx.
\ee
From \eqref{4.4a0} there exists a subsequence $y^{(j_k)}$ of $y^{(j)}$ such that either\\
  $\mL^n(\om_{1,\ep}(y^{(j_k)}))\arr 0$ or $\mL^n(\om_{2,\ep}(y^{(j_k)}))\arr 0$, and so
from \eqref{4.4a1} we have that
$$\av_\om\nu_x(K_1)\,dx=0 \text{ or } 1,$$
implying either that $\supp\nu_x\subset K_1$ a.e. or that $\supp\nu_x\subset K_2$ a.e. as required.
\vspace{.1in}

\noindent{\it Necessity.}
Fix $\ep$ with $0\leq \ep<\ep_0$, where $\ep_0$ is given by Lemma \ref{translem} with $E$ being the eccentricity of $C$
(so that in particular $N_{\ep}(K_1)$ and $N_{\ep}(K_2)$ are disjoint), and let $y\in W^{1,p}(\om;\R^m)$.
First suppose that
$$\mL^n(T_\ep(y))\geq\frac{1}{4}\mL^n(C).$$
Then
$$\int_{T_\ep(y)}(1+|Dy|^p)\,dx\geq\frac{1}{4}\frac{\mL^n(C)}{\mL^n(\om)}\mL^n(\om),$$
so that \eqref{4.2} holds with $\gamma=\frac{1}{4}\frac{\mL^n(C)}{\mL^n(\om)}$.

We thus assume that
\be
\lbl{4.5}
\mL^n(T_\ep(y))<\frac{1}{4}\mL^n(C).
\ee

Since $\om$ is $C$-connected, there is an equivalence class $\mC$ of $\mK(C)$ with respect to $\sim$ that covers $\om$. Suppose that there exist two sets $C_1, C_2\in\mC$ (in particular, both directly congruent to $C$) such
that
\be
\lbl{4.6}
\mL^n(\{ x\in C_i:Dy(x)\in N_\ep(K_i)\})\geq \frac{1}{4}\mL^n(C)
\ee
for $i=1,2$. By the definition of $\sim$ there exist continuous functions $\xi:[0,1]\arr\om, Q:[0,1]\arr SO(n)$, such that $\xi(0)+Q(0)C=C_1, \xi(1)+Q(1)C=C_2$, and $C(t):=\xi(t)+Q(t)C\subset\om$ for all $t\in[0,1]$.
For $i=1,2$ define
$$\theta_i(t)=\frac{\mL^n(\{x\in C(t):Dy(x)\in N_\ep(K_i)\})}{\mL^n(C)}.$$
Then by \eqref{4.5} $\theta_i:[0,1]\arr [0,1]$ is continuous, $\theta_1(t)+\theta_2(t)\geq \frac{3}{4}$
 for all $ t\in [0,1]$, and by \eqref{4.6} $\theta_1(0)\geq\frac{1}{4}$,  $\theta_2(1)\geq \frac{1}{4}$.
Hence there exists $\tau\in [0,1]$ with $\theta_1(\tau)\geq\frac{1}{4}$,  $\theta_2(\tau)\geq \frac{1}{4}$. Indeed if $\theta_2(0)\geq \frac{1}{4}$ we can take $\tau=0$. Otherwise $\theta_2(0)<\frac{1}{4}$ and so there exists $\tau\in[0,1]$ with $\theta_2(\tau)=\frac{1}{4}$ and then $\theta_1(\tau)=\theta_1(\tau)+\theta_2(\tau)-\frac{1}{4}\geq\frac{1}{2}$.
By Lemma \ref{translem} applied to $\tilde C=C(\tau)$ we deduce that
$$\int_{T_\ep(y)}(1+|Dy|^p)\,dx\geq\gamma_0 \frac{\mL^n(C)}{\mL^n(\om)}\mL^n(\om)$$
so that \eqref{4.2} holds with $\gamma=\gamma_0\frac{\mL^n(C)}{\mL^n(\om)}$.

It therefore remains to consider the case when for some $i=1,2$
\be
\lbl{4.7}
\mL^n(\{ x\in D: Dy(x)\in N_\ep (K_i)\})<\frac{1}{4}\mL^n(C)
\ee
for {\it every}  $D\in\mC$.

Let $\tilde x$ be any Lebesgue point of $\om_{i,\ep}=\om_{i,\ep}(y)$. Since $\mC$ covers $\om$ there exist $\xi(\tilde x)\in\R^n$, $\tilde Q(\tilde x)\in SO(n)$ such that $\tilde C(\tilde x):=\xi(\tilde x)+\tilde Q(\tilde x)C$ belongs to $\mC$ and $\tilde x\in \tilde C(\tilde x)$. For $0<r\leq 1$ let $\tilde C_r(\tilde x)=r\tilde C(\tilde x)+(1-r)\tilde x$. Note that $\tilde x\in \tilde C_r(\tilde x)\subset \tilde C(\tilde x)$. Define
$$f(\tilde x,r)=\frac{\mL^n(\tilde C_r(\tilde x)\cap\om_{i,\ep})}{\mL^n(\tilde C_r(\tilde x))}.$$
Then $f(\tilde x,r)$ is continuous in $r\in(0,1]$, and since $\tilde x$ is a Lebesgue point of
$\om_{i,\ep}$ we have
$$\lim_{r\arr 0}f(\tilde x,r)=1.$$
But by \eqref{4.7} applied to $\tilde C(\tilde x)$, we have $f(\tilde x,1)<\frac{1}{4},$ and
so there exists $r(\tilde x)\in (0,1]$ such that
$$\mL^n(\{ x\in \tilde C_{r(\tilde x)}(\tilde x): Dy(x)\in N_\ep(K_i)\})=\half\mL^n(\tilde C_{r(\tilde x)}(\tilde x)).$$
Let $B(a(C),R(C))$ be the minimal ball containing $C$.
Then the balls $$B_{\tilde x}=B(r(\tilde x)[\tilde Q(\tilde x)a(C)+\xi(\tilde x)]+(1-r(\tilde x))\tilde x,r(\tilde x)R(C))$$ are such that $C_{r(\tilde x)}(\tilde x)\subset B_{\tilde x}$ and in particular they cover the set of Lebesgue points of $\om_{i,\ep}$. It follows from Lemma \ref{stein} that there exists a finite or countable
disjoint subfamily $\{B_j\}$, where $B_j=B_{\tilde x_j}$, such that
$$\sum_j\mL^n(B_j)\geq c_n\mL^n(\om_{i,\ep}).$$
Hence by Lemma \ref{translem}, writing $\tilde C_j=\tilde C_{r(\tilde x_j)}(\tilde x_j)$,
\be
\int_{T_\ep(y)}(1+|Dy|^p)\,dx&\geq&\sum_j\int_{T_\ep(y)\cap\tilde C_j}(1+|Dy|^p)\,dx\nonumber\\
&\geq&\gamma_0\sum_j\mL^n(\tilde C_j)\nonumber\\
&=&\gamma_0 \frac{\mL^n(C)}{\mL^n(B(0,R(C)))}\sum_j\mL^n(B_j)\nonumber\\
&\geq&\gamma_0c_n\frac{\mL^n(C)}{\mL^n(B(0,R(C)))} \mL^n(\om_{i,\ep})\nonumber\\&\geq&
\gamma_0c_n(1-E^2)^\frac{n}{2}\mL^n(\om_{i,\ep}).
\ee
Combining this with the previous cases we deduce that  \eqref{4.4} holds with
\begin{equation}\lbl{gammavalue}
\gamma=\min
[\gamma_1\frac{\mL^n(C)}{\mL^n(\om)},\gamma_0c_n(1-E^2)^\frac{n}{2}],
\end{equation}
 where $\gamma_1=\min(\gamma_0,\frac{1}{4})$.
\\

The transition layer estimate can be given an equivalent formulation in terms of gradient Young measures.

\begin{thm}
\lbl{transym}
Let $1<p<\infty$ and let $\om$ be $C$-connected. Then
$K_1,\ldots,K_N$ are incompatible if and only if  there exist constants $\ep_0>0$
and $\gamma>0$
 such that if $0\leq\ep<\ep_0$ and
$(\nu_x)_{x\in\om}$ is an $L^{p}$ gradient Young measure then
\be
\nonumber
 \int_\om\int_{\left[ \bigcup_{r=1}^NN_\ep(K_r)\right]^c} (1+|A|^p)\,d\nu_x(A)\,dx&\geq&\\
\lbl{4.8}& &\hspace{-2in} \gamma\max_{1\leq r\leq N}\min  \left(\int_\om\nu_x(N_\ep(K_r))\,dx,\int_\om\nu_x(\bigcup_{s\neq r}N_\ep(K_s))\,dx\right).
\ee
The constant $\ep_0$ can be chosen to depend only on $E(C), K_1,\ldots, K_N$and $p$,
while the constant $\gamma$ can be chosen to depend only on these quantities and $\mL^n(C)/\mL^n(\om)$.
\end{thm}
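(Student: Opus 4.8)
The plan is to derive Theorem~\ref{transym} from its deformation-level counterpart, Theorem~\ref{transest}, by moving between maps in $W^{1,p}$ and the gradient Young measures they generate; the constants $\ep_0,\gamma$ will be exactly those of Theorem~\ref{transest}, so the stated dependences carry over. For the \emph{sufficiency} direction, suppose \eqref{4.8} holds. Given $y\in W^{1,p}(\om;\R^m)$, apply \eqref{4.8} to the elementary gradient Young measure $\nu_x=\delta_{Dy(x)}$ (which is generated by the constant sequence $y$). Then $x\mapsto\nu_x(N_\ep(K_r))$ is the indicator function of $\om_{r,\ep}(y)$, and the inner integral on the left of \eqref{4.8} equals $1+|Dy(x)|^p$ on $T_\ep(y)$ and $0$ elsewhere, so \eqref{4.8} becomes exactly \eqref{4.1}. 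Since this holds for all such $y$, Theorem~\ref{transest} yields that $K_1,\dots,K_N$ are incompatible.

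For the \emph{necessity} direction, assume $K_1,\dots,K_N$ are incompatible and let $\ep_0,\gamma$ be as in Theorem~\ref{transest}. Fix $0\le\ep<\ep_0$, choose $\tilde\ep\in(\ep,\ep_0)$, and let $(\nu_x)_{x\in\om}$ be an $L^p$ gradient Young measure. By the final assertion of Theorem~\ref{kp}, $\nu$ has a generating sequence $Dz^{(j)}$ with $z^{(j)}\in W^{1,p}(\om;\R^m)$ --- legitimate because a $C$-connected $\om$ satisfies the cone condition (Proposition~\ref{cone}), so Maz'ya's theorem applies --- and with $|Dz^{(j)}|^p$ convergent weakly, hence equi-integrable, in $L^1(\om)$. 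Pick a continuous $\psi\colon M^{m\times n}\to[0,1]$ vanishing on $\bigcup_r N_\ep(K_r)$ and equal to $1$ outside $\bigcup_r N_{\tilde\ep}(K_r)$, and set $g(A)=(1+|A|^p)\psi(A)$; thus $0\le g(A)\le 1+|A|^p$, $g$ vanishes on $\bigcup_r N_\ep(K_r)$, and $g(A)=1+|A|^p$ whenever $A\notin\bigcup_r N_{\tilde\ep}(K_r)$. By equi-integrability of $|Dz^{(j)}|^p$ and the fundamental theorem for Young measures, $g(Dz^{(j)})\weak\langle\nu_x,g\rangle$ in $L^1(\om)$, and hence
\begin{align*}
\int_\om\int_{[\bigcup_r N_\ep(K_r)]^c}(1+|A|^p)\,d\nu_x(A)\,dx
&\ge \int_\om\langle\nu_x,g\rangle\,dx = \lim_{j\to\infty}\int_\om g(Dz^{(j)})\,dx\\
&\ge \liminf_{j\to\infty}\int_{T_{\tilde\ep}(z^{(j)})}(1+|Dz^{(j)}|^p)\,dx\\
&\ge \gamma\,\liminf_{j\to\infty}\,\max_{1\le r\le N}\min\Bigl(\mL^n(\om_{r,\tilde\ep}(z^{(j)})),\ \mL^n(\textstyle\bigcup_{s\ne r}\om_{s,\tilde\ep}(z^{(j)}))\Bigr).
\end{align*}
The last inequality is Theorem~\ref{transest} applied to $z^{(j)}$ at level $\tilde\ep<\ep_0$.

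It remains to pass to the limit in the right-hand side. Fixing $r$, I would squeeze continuous functions between the relevant indicators: $f_r\colon M^{m\times n}\to[0,1]$ with $\chi_{N_\ep(K_r)}\le f_r\le\chi_{N_{\tilde\ep}(K_r)}$, and $h_r$ with $\chi_{\cup_{s\ne r}N_\ep(K_s)}\le h_r\le\chi_{\cup_{s\ne r}N_{\tilde\ep}(K_s)}$ (both possible since $\ep<\tilde\ep$). As $f_r,h_r$ are bounded and continuous and $Dz^{(j)}$ generates $\nu$, the integrals $\int_\om f_r(Dz^{(j)})\,dx$ and $\int_\om h_r(Dz^{(j)})\,dx$ converge, with limits at least $\int_\om\nu_x(N_\ep(K_r))\,dx$ and $\int_\om\nu_x(\bigcup_{s\ne r}N_\ep(K_s))\,dx$ respectively, while $\mL^n(\om_{r,\tilde\ep}(z^{(j)}))\ge\int_\om f_r(Dz^{(j)})\,dx$ and $\mL^n(\bigcup_{s\ne r}\om_{s,\tilde\ep}(z^{(j)}))\ge\int_\om h_r(Dz^{(j)})\,dx$. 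Hence the $\liminf$ of the $r$-th minimum is at least $\min\bigl(\int_\om\nu_x(N_\ep(K_r))\,dx,\ \int_\om\nu_x(\bigcup_{s\ne r}N_\ep(K_s))\,dx\bigr)$, and maximising over $r$ gives \eqref{4.8} with the same $\gamma$ and $\ep_0$, since all that was needed is room to choose $\tilde\ep$ in $(\ep,\ep_0)$.

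Once Theorem~\ref{transest} is in hand, the rest is bookkeeping. The only point requiring real care is that both sides of \eqref{4.8} involve the \emph{sharp} neighbourhoods $N_\ep(K_r)$, whereas Theorem~\ref{transest} controls only the pointwise gradient of a fixed deformation; this gap is bridged by the two-scale device $\ep<\tilde\ep$ and the interposition of continuous test functions, together with the equi-integrability of the generating sequence furnished by Theorem~\ref{kp}, which is precisely what legitimises passing to the limit in the unbounded integrand $1+|A|^p$. I do not expect a genuine obstacle beyond this.
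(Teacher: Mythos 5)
Your proposal is correct and follows essentially the same route as the paper's proof: reduce to the deformation-level estimate of Theorem \ref{transest} by taking a Kinderlehrer--Pedregal generating sequence (Theorem \ref{kp}) with $|Dz^{(j)}|^p$ weakly convergent in $L^1$, and pass to the limit with continuous cutoffs interposed between two neighbourhood levels. The only differences are cosmetic: placing the cutoffs directly between levels $\ep$ and $\tilde\ep$ lets you dispense with the paper's monotone sequences $\varphi_k,\varphi_k^l$ and the final limit $\ep'\to\ep+$, and you treat general $N$ directly rather than reducing to the case $N=2$.
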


Note that Theorem \ref{transest} corresponds to the special case when $\nu_x=\delta_{Dy(x)}$. Again we need only prove the case $N=2$ of Theorem \ref{transym}, namely

\begin{thm}
\lbl{transym1}
Let $1<p<\infty$ and let $\om$ be $C$-connected. A pair of disjoint compact sets
$K_1,K_2$ are incompatible if and only if  there exist constants $\ep_0>0$
and $\gamma>0$
 such that if $0\leq\ep<\ep_0$ and
$(\nu_x)_{x\in\om}$ is an $L^{p}$ gradient Young measure then
\be
\nonumber
\int_\om\int_{\left[ N_\ep(K_1)\cup N_\ep(K_2)\right]^c} (1+|A|^p)\,d\nu_x(A)\,dx&\geq&\\
\lbl{4.9} & &\hspace{-1.5in}\gamma \min\left(\int_\om\nu_x(N_\ep(K_1))\,dx,\int_\om\nu_x\left(N_\ep(K_2)\right)\,dx\right).
\ee
The constant $\ep_0$ can be chosen to depend only on $E(C), K_1, K_2$ and $p$,
while the constant $\gamma$ can be chosen to depend only on these quantities and $\mL^n(C)/\mL^n(\om)$.
\end{thm}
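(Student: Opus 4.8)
The plan is to deduce Theorem~\ref{transym1} from the already-proved $W^{1,p}$ version, Theorem~\ref{transest2}, by passing between a gradient Young measure and a generating sequence of gradients. The \emph{sufficiency} direction costs essentially nothing: given the hypothesized inequality \eqref{4.9} for every $L^p$ gradient Young measure, I would specialize it to $\nu_x=\delta_{Dy(x)}$ for an arbitrary $y\in W^{1,p}(\om;\R^m)$, which is an $L^p$ gradient Young measure generated by the constant sequence. The inner integral then collapses to $(1+|Dy(x)|^p)\chi_{T_\ep(y)}(x)$ and $\int_\om\nu_x(N_\ep(K_r))\,dx$ to $\mL^n(\om_{r,\ep}(y))$, so \eqref{4.9} becomes exactly \eqref{4.2}; hence by Theorem~\ref{transest2}, $K_1,K_2$ are incompatible.

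For \emph{necessity}, assume $K_1,K_2$ incompatible and let $\ep_0,\gamma$ be the constants supplied by Theorem~\ref{transest2}. Fix $\ep\in[0,\ep_0)$ and an $L^p$ gradient Young measure $(\nu_x)_{x\in\om}$. The first step is to choose, via the last assertion of Theorem~\ref{kp}, a generating sequence $Dz^{(j)}$ of $\nu$ for which $|Dz^{(j)}|^p$ converges weakly in $L^1(\om)$, hence is equi-integrable; since a $C$-connected domain satisfies the cone condition (Proposition~\ref{cone}), the $z^{(j)}$, after adding suitable constants, belong to $W^{1,p}(\om;\R^m)$. I would then pick an intermediate radius $\delta:=(\ep+\ep_0)/2\in(\ep,\ep_0)$ and apply Theorem~\ref{transest2} at level $\delta$ to each $z^{(j)}$, obtaining $\int_{T_\delta(z^{(j)})}(1+|Dz^{(j)}|^p)\,dx\geq\gamma\min(\mL^n(\om_{1,\delta}(z^{(j)})),\mL^n(\om_{2,\delta}(z^{(j)})))$ for every $j$.

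The core of the argument is a limit passage that uses $\ep<\delta$ to trap the discontinuous indicator functions between continuous ones. On the left, I would take a continuous $h:M^{m\times n}\to[0,1]$ with $h\equiv1$ on $\{A:\dist(A,K_1\cup K_2)\geq\delta\}$ and $h\equiv0$ on $N_\ep(K_1)\cup N_\ep(K_2)$, so that $\chi_{T_\delta(z^{(j)})}(x)\leq h(Dz^{(j)}(x))$ pointwise and $h(A)(1+|A|^p)\leq\chi_{[N_\ep(K_1)\cup N_\ep(K_2)]^c}(A)(1+|A|^p)$ for all $A$. Equi-integrability of $|Dz^{(j)}|^p$ makes $\int_\om h(Dz^{(j)})(1+|Dz^{(j)}|^p)\,dx$ converge to $\int_\om\int h(A)(1+|A|^p)\,d\nu_x(A)\,dx$, which is $\leq\int_\om\int_{[N_\ep(K_1)\cup N_\ep(K_2)]^c}(1+|A|^p)\,d\nu_x(A)\,dx$. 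On the right, I would bound $\mL^n(\om_{r,\delta}(z^{(j)}))$ below by $\int_\om f_r(Dz^{(j)})\,dx$ with $f_r:M^{m\times n}\to[0,1]$ continuous, $f_r\equiv1$ on $N_\ep(K_r)$ and $f_r\equiv0$ off $N_\delta(K_r)$, so that $\liminf_j\mL^n(\om_{r,\delta}(z^{(j)}))\geq\int_\om\langle\nu_x,f_r\rangle\,dx\geq\int_\om\nu_x(N_\ep(K_r))\,dx$. Chaining these with the displayed inequality for each $j$ and letting $j\to\infty$, using $\liminf_j\min(a^{(j)}_1,a^{(j)}_2)\geq\min(\liminf_j a^{(j)}_1,\liminf_j a^{(j)}_2)$, yields \eqref{4.9} with the same $\ep_0$ and $\gamma$, which therefore depend on the same quantities as in Theorem~\ref{transest2}.

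The one point I expect to need genuine care is the unboundedness of the weight $1+|Dz^{(j)}|^p$: for an arbitrary generating sequence $|Dz^{(j)}|^p$ could concentrate, so $\int_\om h(Dz^{(j)})(1+|Dz^{(j)}|^p)\,dx$ would acquire extra mass in the limit and overshoot the Young-measure integral, destroying the upper bound. This is exactly why the argument must use the equi-integrable generating sequence of Theorem~\ref{kp} rather than an arbitrary one. The remaining items — the three-radii bookkeeping $\ep<\delta<\ep_0$, the elementary $\liminf$ inequality, verifying that the $z^{(j)}$ lie in $W^{1,p}(\om;\R^m)$, and the degenerate case $\ep=0$ (where $N_0(K_r)=K_r$ and the same $h$, $f_r$ work) — are routine.
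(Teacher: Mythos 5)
Your proof is correct and takes essentially the same route as the paper: reduce to Theorem \ref{transest2} applied to the equi-integrable generating sequence supplied by Theorem \ref{kp}, sandwich the sharp indicator functions between continuous cutoffs, and pass to the limit via the Young-measure representation, with equi-integrability ruling out concentration of $|Dz^{(j)}|^p$. The only difference is cosmetic: by working at a single intermediate radius $\delta\in(\ep,\ep_0)$ with cutoffs bridging the gap between $\ep$ and $\delta$, you avoid the paper's double limit ($k\to\infty$ followed by $\ep'\to\ep+$), while the substance of the argument is identical.
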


\noindent{\it Proof of Theorem \ref{transym1}} \hspace{.1in} 
Since Theorem \ref{transest2} is a special case of Theorem \ref{transym1} we need only show that
if $K_1, K_2$ are incompatible then \eqref{4.9} holds. Let $\ep_0, \gamma$ be as in Theorem \ref{transest2}, and let
$0\leq\ep<\ep'<\ep_0$. Let $(\nu_x)_{x\in\om}$ be an $L^{p}$ gradient Young measure.
By Theorem \ref{kp}, we may suppose that $(\nu_x)_{x\in\om}$ is generated by a sequence $Dy^{(j)}$ of gradients which is such that $|Dy^{(j)}|^p$ is weakly convergent in $L^1(\om)$. Also
\be
\lbl{4.9a}
\int_\om\int_{M^{m\times n}}|A|^pd\nu_x(A)\,dx<\infty.
\ee
 For $k=1,2, \ldots$ let $\varphi_k:M^{m\times n}\arr [0,1]$ be continuous and satisfy
\be
\lbl{4.10}
\varphi_k(A)=\left\{\begin{array}{ll}1& \mbox{if } A\in [N_{\ep'}(K_1)\cup N_{\ep'}(K_2)]^c,\\
0& \mbox{if } A\in N_{\ep'-\frac{1}{k}}(K_1)\cup N_{\ep'-\frac{1}{k}}(K_2),
\end{array}\right.
\ee
with $\varphi_k$ nonincreasing in $k$. The existence of $\tilde\varphi_k$ satisfying all but the last condition follows from Urysohn's lemma, and then we may set $\varphi_k=\min_{j\leq k}\tilde\varphi_j$. Clearly $\varphi_k\arr \chi_{\ep'}$ pointwise, where $\chi_{\ep'}$ is the characteristic function of the closure of $[N_{\ep'}(K_1)\cup N_{\ep'}(K_2)]^c$. Similarly, for $l=1,2$ let $\varphi^l_k:M^{m\times n}\arr [0,1]$ be continuous and satisfy
\be
\lbl{4.11}
\varphi_k^l(A)=\left\{\begin{array}{ll}0& \mbox{if } A\in N_{\ep'}(K_l)^c,\\
1& \mbox{if } A\in N_{\ep'-\frac{1}{k}}(K_l),\end{array}\right.
\ee
with $\varphi^l_k$ nondecreasing in $k$. Clearly $\varphi^l_k\arr \chi({\rm int}\,N_{\ep'}(K_l))$ pointwise.

For each $j,k$ we have by Theorem \ref{transest2} that
\be
\nonumber
\int_\om\varphi_k(Dy^{(j)})(1+|Dy^{(j)}|^p)\,dx&\geq& \int_{T_{\ep'}(y^{(j)})}(1+|Dy^{(j)}|^p)\,dx\\
\nonumber&\geq&\gamma\min\left(\mL^n(\om_{1,\ep'}(y^{(j)})), \mL^n(\om_{2,\ep'}(y^{(j)}))\right)\\
\nonumber&\geq&\gamma\min\left(\int_\om \varphi^1_k(Dy^{(j)})\,dx, \int_\om \varphi^2_k(Dy^{(j)})\,dx\right).
\ee
Since $|Dy^{(j)}|^p$ is weakly convergent in $L^1(\om)$, it is equi-integrable, and hence so is
$\varphi_k(Dy^{(j)})(1+|Dy^{(j)}|^p)$, which thus has an $L^1$ weakly convergent subsequence. Letting $j\arr\infty$ in this subsequence we deduce from the fundamental properties of Young measures that
\be
\lbl{4.12}
\int_\om\langle\nu_x,\varphi_k(A)(1+|A|^p)\rangle\,dx\geq\gamma\min\left(\int_\om \langle\nu_x,\varphi^1_k\rangle\,dx,  \int_\om \langle\nu_x,\varphi^2_k\rangle\,dx\right).
\ee
Passing  to the limit $k\arr\infty$, using the everywhere monotone convergence of $\varphi_k, \varphi_k^1,\varphi_k^2$, we  obtain
\be
\nonumber
\int_\om\int_{M^{m\times n}}\chi_{\varepsilon'}(A)(1+|A|^p)\,d\nu_x(A)\,dx&&\\
\nonumber\mbox{ }& &\hspace{-1in}\geq\gamma\min\left(\int_\om \nu_x({\rm int}\,N_{\varepsilon'}(K_1))\,dx,\int_\om \nu_x({\rm int}\,N_{\varepsilon'}(K_2))\,dx\right)\\ \nonumber
&&\hspace{-1in}\geq\gamma \min\left(\int_\om\nu_x(N_\varepsilon(K_1))\,dx,\int_\om\nu_x(N_\varepsilon(K_2))\,dx\right).
\ee
Letting $\varepsilon'\arr\varepsilon +$, and noting that $\chi_{\varepsilon'}\arr\chi([N_\varepsilon(K_1)\cup N_\varepsilon(K_2)]^c)$ monotonically, we deduce by \eqref{4.9a} and monotone convergence that
 \eqref{4.9} holds.
\\

\begin{cor}\lbl{epnbhds}
Let $K_1,\ldots K_N$ be incompatible. Then there exists $\varepsilon_0>0$ such that $N_\varepsilon(K_1),\ldots,N_\varepsilon(K_N)$ are incompatible for $0\leq \varepsilon<\varepsilon_0$.
\end{cor}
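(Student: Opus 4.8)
The plan is to read off the corollary directly from the transition layer estimate in its gradient Young measure form, Theorem \ref{transym}. Since the notion of incompatibility does not depend on the domain (Remark \ref{rem1}b,c), it suffices to verify the defining property on one convenient domain, and I would take $\om=C=B(0,1)$, which is convex and hence $C$-connected by Proposition \ref{c6a}. Applying Theorem \ref{transym} (with $p=2$, say) to the incompatible sets $K_1,\ldots,K_N$ yields constants $\delta_0>0$ and $\gamma>0$ for which the estimate \eqref{4.8} holds on this $\om$. I would then set $\varepsilon_0=\min(\delta_0,\half d)$, where $d=\min_{r\ne s}\dist(K_r,K_s)>0$ (positive since the $K_r$ are compact and disjoint; for $N=1$ the corollary is vacuous). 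The role of the term $\half d$ is to guarantee that for $0\le\varepsilon<\varepsilon_0$ the sets $N_\varepsilon(K_1),\ldots,N_\varepsilon(K_N)$ are still pairwise disjoint, as they must be for incompatibility even to be defined.

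Next, fix $0\le\varepsilon<\varepsilon_0$ and let $(\mu_x)_{x\in\om}$ be an $L^2$ gradient Young measure with $\supp\mu_x\subset\bigcup_{r=1}^N N_\varepsilon(K_r)$ for a.e. $x$; I must show that $\supp\mu_x\subset N_\varepsilon(K_i)$ a.e. for a single index $i$. I would substitute $\mu$ and the given $\varepsilon$ into \eqref{4.8} (legitimate since $\varepsilon<\delta_0$). The key point is that each $N_\varepsilon(K_r)$ is closed, so $[\bigcup_r N_\varepsilon(K_r)]^c$ is an open set disjoint from $\supp\mu_x$ for a.e. $x$, whence the left-hand side of \eqref{4.8} vanishes. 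Writing $a_r=\int_\om\mu_x(N_\varepsilon(K_r))\,dx$ and using disjointness of the $N_\varepsilon(K_s)$ to replace $\mu_x(\bigcup_{s\ne r}N_\varepsilon(K_s))$ by $\sum_{s\ne r}\mu_x(N_\varepsilon(K_s))$, the inequality reduces to $0\ge\gamma\max_{1\le r\le N}\min(a_r,\sum_{s\ne r}a_s)$, so for each $r$ either $a_r=0$ or $\sum_{s\ne r}a_s=0$. Because $\supp\mu_x$ lies in the disjoint union, $\sum_{r}\mu_x(N_\varepsilon(K_r))=1$ a.e., hence $\sum_r a_r=\mL^n(\om)>0$; picking any $i$ with $a_i>0$ forces $\sum_{s\ne i}a_s=0$, i.e. $a_s=0$ for all $s\ne i$, so $\mu_x(N_\varepsilon(K_s))=0$ a.e. for $s\ne i$ and therefore $\mu_x(N_\varepsilon(K_i))=1$, i.e. $\supp\mu_x\subset N_\varepsilon(K_i)$, for a.e. $x$. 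This shows that $N_\varepsilon(K_1),\ldots,N_\varepsilon(K_N)$ are $L^2$ incompatible, hence incompatible since they are compact (Remark \ref{rem1}c).

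I do not expect any genuine analytic obstacle here: the corollary is essentially a repackaging of Theorem \ref{transym}, and the proof above uses nothing beyond it. The two points needing a little care are (i) shrinking $\varepsilon_0$ below $\half d$ so that the enlarged sets remain disjoint --- this is needed both for the definition of incompatibility and for the additivity of $\mu_x$ over them --- and (ii) observing that the index $i$ one obtains is automatically the same for a.e. $x$, because the decisive quantities $a_r$ are integrals over the whole of $\om$ rather than pointwise data. One could alternatively work with Theorem \ref{transest2} and generating sequences of gradients, passing to the limit as in the proof of Theorem \ref{transym1}, but that would merely reproduce that argument, so routing through Theorem \ref{transym} is the most economical choice.
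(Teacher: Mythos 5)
Your proof is correct and follows essentially the same route as the paper: Corollary \ref{epnbhds} is deduced directly from the gradient Young measure form of the transition layer estimate (Theorem \ref{transym}) by observing that the left-hand side of \eqref{4.8} vanishes when $\supp\mu_x\subset\bigcup_r N_\varepsilon(K_r)$, forcing one of the mass integrals to be zero. Your extra bookkeeping (shrinking $\varepsilon_0$ below $\tfrac{1}{2}\dist$ between the $K_r$, fixing a convenient $C$-connected domain, and summing the $a_r$ to isolate a single index $i$) is sound and merely makes explicit what the paper leaves implicit via Remark \ref{rem1}.
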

\begin{proof}
By Remark \ref{rem1}b we may assume that $\om$ is $C$-connected, while by Remark \ref{rem1}a we need only show that $N_\varepsilon(K_r)$ and $\bigcup_{s\neq r}N_\varepsilon(K_s)$ are incompatible. Let $\supp\nu_x\subset\bigcup_{r=1}^NN_\varepsilon(K_r)$ almost everywhere. Then the left-hand side of \eqref{4.8} is zero. Hence for each $r$ either $\nu_x(N_\varepsilon(K_r))=0$ a.e. or $\nu_x(\bigcup_{s\neq r}N_\varepsilon(K_s))=0$ a.e., and hence either $\supp\nu_x\subset \bigcup_{s\neq r}N_\varepsilon(K_s)$ a.e. or
$\supp\nu_x\subset N_\varepsilon(K_r)$ a.e., giving the result.
\end{proof}
Applying the above Corollary \ref{epnbhds} to the case when each $K_r$ consists of a single matrix we immediately obtain
\begin{cor}\lbl{open}
For any $N$ the set of points $(A_1,\ldots,A_N)\in (M^{m\times n})^N$ with $\{A_1\},\ldots, \{A_N\}$ incompatible is open.
\end{cor}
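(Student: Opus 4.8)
The plan is to deduce this at once from Corollary~\ref{epnbhds}, using only the elementary fact that incompatibility passes to subsets. First I would fix a point $(A_1,\ldots,A_N)\in(M^{m\times n})^N$ such that the singletons $\{A_1\},\ldots,\{A_N\}$ are incompatible (in particular, by Definition~\ref{ymd}, the $A_r$ are distinct). Applying Corollary~\ref{epnbhds} to the compact sets $K_r=\{A_r\}$ produces an $\varepsilon_0>0$ such that the closed balls $N_\varepsilon(\{A_r\})=\overline{B(A_r,\varepsilon)}$, $r=1,\ldots,N$, are incompatible for every $\varepsilon$ with $0\le\varepsilon<\varepsilon_0$; I would fix one such $\varepsilon>0$.

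Next I would record the monotonicity observation: if $K_1',\ldots,K_N'$ are incompatible and $L_1,\ldots,L_N$ are disjoint closed sets with $L_r\subset K_r'$ for each $r$, then $L_1,\ldots,L_N$ are incompatible. Indeed, let $\nu=(\nu_x)_{x\in\om}$ be an $L^p$ gradient Young measure with $\supp\nu_x\subset\bigcup_{r}L_r\subset\bigcup_{r}K_r'$ for a.e.\ $x\in\om$. By incompatibility of the $K_r'$ there is an $i$ with $\supp\nu_x\subset K_i'$ for a.e.\ $x$; since the $K_r'$ are pairwise disjoint and $L_r\subset K_r'$, we have $\big(\bigcup_{r}L_r\big)\cap K_i'=L_i$, whence $\supp\nu_x\subset L_i$ for a.e.\ $x$, which is what is required.

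Finally I would put the two together. Suppose $(B_1,\ldots,B_N)\in(M^{m\times n})^N$ satisfies $|B_r-A_r|<\varepsilon$ for $r=1,\ldots,N$. Then $\{B_r\}\subset N_\varepsilon(\{A_r\})$ for each $r$, and since the sets $N_\varepsilon(\{A_r\})$ are incompatible they are in particular pairwise disjoint, so the singletons $\{B_1\},\ldots,\{B_N\}$ are disjoint. Applying the monotonicity observation with $L_r=\{B_r\}$ and $K_r'=N_\varepsilon(\{A_r\})$ shows that $\{B_1\},\ldots,\{B_N\}$ are incompatible. Hence the open neighbourhood $\{(B_1,\ldots,B_N):\max_{1\le r\le N}|B_r-A_r|<\varepsilon\}$ of $(A_1,\ldots,A_N)$ consists entirely of incompatible $N$-tuples, so the set in the statement is open. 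The only point needing any verification is the monotonicity observation, and that is routine; all the real content sits in Corollary~\ref{epnbhds}.
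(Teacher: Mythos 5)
Your proof is correct and follows the same route as the paper, which derives the corollary immediately from Corollary~\ref{epnbhds} applied to the singleton sets $K_r=\{A_r\}$. The only difference is that you make explicit the (routine) observation that incompatibility passes to disjoint closed subsets, which the paper leaves implicit in the word ``immediately''.
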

When $N=2$ this already gives interesting information. Indeed it implies a special case of \v{S}ver\'{a}k's three matrix theorem \cite{Sverak93a}.  In fact if $A_1, A_2\in M^{m\times n}$, with $\rank(A_1-A_2)>1$, we have that $\{A_1\}, \{A_2\}$ are incompatible, and so if $A_3$ is taken sufficiently close to $A_2$ with $\rank(A_2-A_3)>1$ we have that the sets $K_1=\{A_1\}$ and $K_2=\{A_2,A_3\}$ are incompatible. Thus if
$(\nu_x)_{x\in\om}$ is a gradient Young measure with $\supp \nu_x\subset \{A_1,A_2,A_3\}$ a.e. then either $\nu_x=\delta_{A_1}$ a.e. or $\supp \nu_x\subset \{A_2,A_3\}$ almost everywhere. In the latter case, since $\{A_2\}, \{A_3\}$ are incompatible, we have that either $\nu_x=\delta_{A_2}$ a.e. or $\nu_x=\delta_{A_3}$ almost everywhere. Hence $\nu_x=\delta_{A_i}$ a.e. for some $i$, which is the statement of \v{S}ver\'{a}k's theorem in this special case. As remarked to us by V. \v{S}ver\'{a}k, this special case cannot be proved
using the minors relations alone. For example, taking $m=n=2$, the probability measure
$$\nu=\frac{\varepsilon^2}{4-\ep^2}\delta_0+\frac{2-\ep^2}{4-\ep^2}(\delta_\1+\delta_{A_\ep}),$$
where $A_\ep=\left(\begin{array}{cc}1-\ep&0\\0&1+\ep\end{array}\right)$ and $\ep>0$ is sufficiently small, satisfies the minors relation $\det\bar\nu= \langle\nu,\det\rangle$, but by the above $\{0\},\{\1\},\{A_\ep\}$ are incompatible. By Theorem \ref{homcom}, Corollary \ref{epnbhds} thus implies the existence of quasiconvex functions that are not polyconvex. In 
\cite{u6} we give a new proof of the three matrix theorem in the general case, using similar techniques as in the proof of Theorem \ref{transest}
plus ingredients from the theory of quasiregular maps. Another proof using results from the theory of two dimensional quasiregular maps is due to Astala \& Faraco \cite{astalafaraco}. 

The following simple example shows that Theorems \ref{transest}, \ref{transest2}, \ref{transym}, \ref{transym1} are not true if $1+|A|^p$ is replaced by $|A|^p$ in the integrals over the transition layer, even when the volume of the transition layer is arbitrarily small.
\begin{example}\lbl{noone}
Let $m=n=2$, $\om=(0,1)^2$ and let $A_1=e_2\otimes e_2$, $A_2=(e_1+e_2)\otimes (e_1+e_2)$. Then $K_1=\{A_1\},K_2=\{A_2\}$ are incompatible, but $0$ is compatible with both $A_1$ and $A_2$. Define for small $\delta>0$ and for $x\in\om$,
\begin{eqnarray*}
y_\delta(x)=\left\{\begin{array}{ll}x_2e_2&\text{ if }0<x_2<1-\delta,\\
(1-\delta)e_2&\text{ if }x_2\geq 1-\delta,\; x_1+x_2\leq 2-\delta,\\
(e_1+e_2)(x_1+x_2)+(\delta-2)e_1-e_2&\text{ if }x_2\geq 1-\delta,\; x_1+x_2>2-\delta.\end{array}
\right.
\end{eqnarray*}
Then
\begin{eqnarray*}
Dy_\delta(x)=\left\{\begin{array}{ll}A_1&\text{ if }0<x_2<1-\delta,\\
0&\text{ if }x_2\geq 1-\delta,\; x_1+x_2\leq 2-\delta,\\
A_2&\text{ if }x_2\geq 1-\delta,\; x_1+x_2>2-\delta,\end{array}
\right.
\end{eqnarray*}
and we have for any $p>1$
\begin{eqnarray*}
\int_{T_0(y_\delta)}|Dy_\delta|^p\,dx=0,\;\;\;\; \min \{\mL^2(\Omega_{1,0}(y_\delta)),\mL^2( \Omega_{2,0}(y_\delta))\}=\half\delta^2.
\end{eqnarray*}
\end{example}
\vspace{.2in}

We now show that Theorems \ref{transest}, \ref{transest2}, \ref{transym}, \ref{transym1} do not hold for general bounded domains $\om$. Since by Proposition \ref{cone} the hypothesis in these theorems that $\om$ be $C$-connected is equivalent to the cone condition, for a counterexample we need a domain not satisfying the cone condition.

\begin{example}\lbl{rooms} We take $\om$ to be the `rooms and passages' domain of Fraenkel \cite{fraenkel79}. For simplicity we let $m=n=2$. This domain $\om$ consists of the union of a sequence of square rooms $Q_j=(a_j,0)+ h_j(-1,1)^2$, $j=1,2,\ldots$, of decreasing side $2h_j>0$, centred at the points $(a_j,0)\in\R^2$ on the $x_1$-axis, where $a_1=0, a_j>0$, together with the rectangular connecting corridors $C_j=[a_j+ h_j, a_{j+1}-h_{j+1}]\times (-d_j, d_j)$ of length $l_j=a_{j+1}-h_{j+1}-(a_j+
h_j)>0$ and thickness $2d_j$, where $0<d_j<h_{j+1}$. In order for $\om$ to be bounded, we require that $\sum _{j=1}^\infty (2h_j+l_j)<\infty$.

Let $A_1,A_2\in M^{2\times 2}$ with $\rank(A_1-A_2)>1$, for example $A_1=0, A_2=\1$. Thus by Example \ref{twomatrices} the sets $K_1=\{A_1\}, K_2=\{A_2\}$ are incompatible. We define $y^{(j)}:\om\arr\R^2$ by
\begin{eqnarray*}
y^{(j)}(x)=\left\{\begin{array}{ll}A_1x &\text{ if } x\in \om_j,\\
\frac{x_1-a_{j-1}-h_{j-1}}{l_{j-1}}A_2x+\left(1-\frac{x_1-a_{j-1}-h_{j-1}}{l_{j-1}}\right)A_1x &\text{ if }x\in C_{j-1},\\A_2x &\text{ if }x\in Q_j,\\
\frac{x_1-a_{j}-h_{j}}{l_{j}}A_1x+\left(1-\frac{x_1-a_{j}-h_{j}}{l_{j}}\right)A_2x &\text{ if }x\in C_{j},
\end{array}\right.
\end{eqnarray*}
where $\om_j=\om\backslash (C_{j-1}\cup Q_j\cup C_j)$.
Thus in the corridor $C_{j-1}$
$$|Dy^{(j)}(x)|\leq c_0+\frac{c_1}{l_{j-1}},$$
while in the corridor $C_j$
$$|Dy^{(j)}(x)|\leq c_0+\frac{c_1}{l_{j}},$$
where $c_0, c_1$ are constants independent of $j$. Thus taking $\ep=0$, we have
\begin{eqnarray*}
\int_{T_0(y^{(j)})}(1+|Dy^{(j)}|^p)\,dx&=&\int_{C_{j-1}\cup C_j}(1+|Dy^{(j)}|^p)\,dx\\
&\leq &2l_{j-1}d_{j-1}\left[ 1+\left(c_0+\frac{c_1}{l_{j-1}}\right)^p\right] \\
&+& 2l_{j}d_{j}\left[ 1+\left(c_0+\frac{c_1}{l_{j}}\right)^p\right],
\end{eqnarray*}
while \begin{eqnarray*}
\min(\mL^2(\om_{1,0}(y^{(j)})),\mL^2(\om_{2,0}(y^{(j)}))= \mL^2(Q_j)=4h_j^2.
\end{eqnarray*}
Thus, fixing the sequences $h_j$ and $l_j$ and letting $d_j\arr 0$ sufficiently rapidly as $j\arr \infty$, we violate the conclusion
\eqref{4.2} of Theorem \ref{transest2} for any choice of $\gamma$.
\end{example}
For applications it is important to be able to estimate the constants $\eps_0$ and $\gamma$ in Theorems \ref{transest}, \ref{transest2}, \ref{transym}, \ref{transym1} and Corollary \ref{epnbhds}. 
The proof of Theorem \ref{transest2} gives   a lower bound on $\gamma$ (see \eqref{gammavalue})  in terms of the constant $\gamma_0$ occurring in Lemma \ref{translem}. This lemma is proved by contradiction, and thus gives no estimate on $\eps_0$ or $\gamma_0$. However, Zhang \cite{zhang2003,zhang2003a,zhang2003b} has obtained estimates for the constant $\eps_0$ in Corollary \ref{epnbhds} using Schauder estimates in BMO and Campanato spaces for linear elliptic systems in the two cases (a) $m$ and $n$ arbitrary, $K_r=\{A_r\}, r=1,\ldots,N$ where the linear span of the distinct matrices $A_1,\ldots,A_N$ has no rank-one connections, and (b) $m=n=2$ and $K_r=\lambda_rSO(2), r=1,\ldots,N$ with $0<\lambda_1<\cdots<\lambda_N$. 

As regards  $\gamma$ we can obtain upper bounds by considering explicit test
functions. We illustrate this in the next example for the case when $m=n$, $p=2$, $\om$ is a ball and $K_1=\{\lambda\1\}, K_2=\{\mu\1\}$ with $\lambda\neq\mu$.
\begin{example}
\lbl{radial}
Let $m=n>1$, $\om=B(0,1)$, $A_1=\lambda\1$, $A_2=\mu\1$, $\lambda\neq\mu$. We consider for $k>1$ and sufficiently small $\ep>0$  the radial mapping
\begin{eqnarray}
y_\ep(x)=\frac{r_\ep(R)}{R}x,
\end{eqnarray}
where $R=|x|$ and
\be
r_\ep(R)=\left\{\begin{array}{ll}\lambda R&\text{ if } 0\leq R\leq\ep,\\
\mu R&\text{ if }k\ep\leq R<1.
\end{array}\right.
\ee
For $\ep<R<k\ep$ we choose $r_\ep$ so that it is continuous and  minimizes
\be
\lbl{radial0}
\int_{\{\ep<|x|<k\ep\}}(1+|Dy|^2)\,dx.
\ee
 Noting that
\be
\lbl{radial1}
|Dy|^2=(n-1){\left(\frac{r}{R}\right)}^2+(r')^2,
\ee
the Euler-Lagrange equation for the functional
\be
\lbl{radial2}
\int_\ep^{k\ep}R^{n-1}\left(1+(n-1)\left(\frac{r}{R}\right)^2+(r')^2\right)dR
\ee
has linearly independent solutions $r=R$ and $r=R^{1-n}$. Choosing constants $A,B$ so that $r(R)=AR+BR^{1-n}$ satisfies $r(\ep)=\lambda\ep, r(k\ep)=\mu k\ep$, we find that for the optimal transition layer
\be
\lbl{radial3}
r_\ep(R)= \left(\frac{k^n\mu-\lambda}{k^n-1}\right)R+\frac{(\lambda-\mu)(\ep k)^n}{k^n-1}R^{1-n}, \text{ if }\ep<R<k\ep.
\ee
(In fact by uniqueness of solutions to Laplace's equation this radial solution is the minimizer of \eqref{radial0} among all (not necessarily radial) maps matching the boundary conditions at $R=\ep, k\ep$.) Denoting by $T=\{\ep<|x|<k\ep\}$ the transition layer, we calculate using \eqref{radial1} that
the ratio
$$\rho=\frac{\int_T(1+|Dy|^2)\,dx}{\mL^n(\{x:Dy(x)=\lambda x\})}$$
is given by
\be
\nonumber
\rho&=&\frac{1}{\ep^{n-1}\omega_n}\int_\ep^{k\ep}\hspace{-.1in}R^{n-1}\left[1+n\left(\frac{k^n\mu-\lambda}{k^n-1}\right)^2+
n(n-1)\left(\frac{\lambda-\mu}{k^n-1}\right)^2\left(\frac{\ep k}{R}\right)^{2n}\right]dR\\&=&\int_1^ks^{n-1}\left[1+n\left(\frac{k^n\mu-\lambda}{k^n-1}\right)^2+
n(n-1)\left(\frac{\lambda-\mu}{k^n-1}\right)^2\left(\frac{k}{s}\right)^{2n}\right]\,ds\nonumber\\
&=&\frac{k^n-1}{n}+\frac{(k^n\mu-\lambda)^2}{k^n-1}+
(n-1)\frac{(\lambda-\mu)^2k^n}{k^n-1}.\nonumber
\ee
Here $\omega_n$ denotes the $(n-1)$-dimensional measure of the unit sphere in $\R^n$. To find the optimal width of the transition layer, we minimize $\rho$ over $k>1$. Setting $\tau=k^n$ and minimizing over $\tau>1$ we find that the minimum value $\rho_{\text{min}}$ is achieved when $\tau =1+\frac{n}{\sqrt{1+n\mu^2}}|\lambda-\mu|$, and that
$$\rho_{\text{min}}=(n-1)(\lambda-\mu)^2+
2(\sqrt{1+n\mu^2}-\sign(\lambda-\mu))|\lambda-\mu|.$$
Interchanging $\lambda$ and $\mu$ we deduce finally that the constant $\gamma$ satisfies
\begin{equation}
\label{radial5}
\gamma\leq (n-1)(\lambda-\mu)^2+
2h(\lambda,\mu)|\lambda-\mu|,
\end{equation}
where $h(\lambda,\mu)=\min(\sqrt{1+n\mu^2}-\sign(\lambda-\mu),\sqrt{1+n\lambda^2}-\sign(\mu-\lambda))$. Of course this upper bound tends to zero as $\lambda\arr\mu$. Note that the upper bound is proportional to  $|\lambda-\mu|$ when both $\lambda$ and $\mu$ are near one.
\end{example}

\section{Local Minimizers and Metastability}
\lbl{meta}
In this section we apply the transition layer estimate to prove that certain maps or microstructures (in the {\it parent phase}) are local minimizers of the corresponding energy, the mechanism being that the values of the gradient that could potentially lower the energy (those of the {\it product phase}) are incompatible with those of the parent phase, so that the gain in energy due to the resulting transition layer is greater than the loss of energy in using the gradients of the product phase. In
applications to materials undergoing solid phase transformations this provides a mechanism for {\it incompatibility induced hysteresis}.

The basic integral we consider is
\be
\lbl{meta1}
I(y)=\int_\om W(Dy(x))\,dx,
\ee
where $\om\subset\R^n$ is a bounded domain that is $C$-connected. We assume that\vspace{.1in}

\noindent (H1)  $W:M^{m\times n}\arr \R\cup\{+\infty\}$ is lower semicontinuous,

\noindent (H2) There exist constants $c_0\in\R, c_1>0, p>1$ such that
\be
\lbl{meta2}
W(A)\geq c_0+c_1|A|^p\mbox{ for all }A\in M^{m\times n}.
\ee
\vspace{.02in}

More generally we will consider the extension (relaxation) of \eqref{meta1} to gradient Young measures
\be
\lbl{meta3}
I(\nu)=\int_\om \int_{M^{m\times n}}W(A)\,d\nu_x(A)\,dx,
\ee
where $\nu=(\nu_x)_{x\in\om}$ is the Young measure corresponding to a sequence $Dy^{(j)}$ that is bounded in $L^p(\om;M^{m\times n})$. The functional \eqref{meta1} corresponds to the special case when $\nu_x=\delta_{Dy(x)}$ for some $y\in W^{1,p}(\om;\R^m)$.

We suppose that the parent and product phases are represented by the compact sets $K_1, K_2\subset M^{m\times n}$ respectively, where $K_1, K_2$ are incompatible. Let $\ep_0=\ep_0(E(C),K_1,K_2,p)$ be as in Theorem \ref{transest2}, and fix $\ep$ with $0<\ep<\ep_0$. We assume that\vspace{.1in}

\noindent (H3) $\min_{A\in N_{\ep/2}(K_1)}W(A)=0,\; W(A)\geq 0 \mbox{ for all }A\in N_\ep(K_1)$,

\noindent (H4) $W(A)\geq -\delta \mbox{ for all }A\in N_\ep (K_2) \mbox{ and some }\delta>0$,

\noindent (H5) $W(A)\geq \alpha \mbox{ for all }A\in [N_\ep(K_1)\cup N_\ep(K_2)]^c\mbox{ and some } \alpha>0 $.\vspace{.1in}

Thus $W$ has a local minimizer near the well $K_1$, with minimum value zero, and a possibly lower local minimizer near the well $K_2$. We will assume later that $\delta>0$ is sufficiently small, while $\alpha>0$ remains fixed. The hypotheses (H1)-(H5) are satisfied if $W$ is a small perturbation of some $W_0$ which has local minimizers with equal minimum value zero at the wells $K_1, K_2$, as we now show.
\begin{prop}
\lbl{Wep}
Assume that

$(\rm{H}1)'$ $W_\tau:M^{m\times n}\arr \R\cup\{+\infty\}$ is lower semicontinuous in $(\tau,A)\in[0,1]\times M^{m\times n}$, with $W_\tau(A)$ continuous in $\tau$ for all $A\in M^{m\times n}$,

$(\rm{H}2)'$ $W_0(A)\geq 0\mbox{ for all }A\in M^{m\times n}, \mbox{ and }W_0^{-1}(0)=K_1\cup K_2$,

$(\rm{H}3)'$ $\min_{A\in N_\ep(K_1)}W_\tau(A)=0$ for all $\tau\in[0,1]$,

$(\rm{H}4)'$ $W_\tau(A)\geq c_0+c_1|A|^p\mbox{ for all } \tau\in[0,1], A\in M^{m\times n}$.
\vspace{.1in}

Then, for sufficiently small $\tau>0$, $W_\tau$ satisfies $(\rm{H}1)-(\rm{H}5)$ for some fixed  $\alpha>0$ and $\delta=\delta(\tau)$ satisfying
\be
\lbl{meta4}
\lim_{\tau\arr 0+}\delta(\tau)=0.
\ee
\end{prop}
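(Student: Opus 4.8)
\emph{Proof proposal.} Conditions (H1) and (H2) require no work: (H1) follows by restricting the joint lower semicontinuity in $(\rm{H}1)'$ to a fixed value of $\tau$, and (H2) is precisely $(\rm{H}4)'$. So the plan is to verify (H3), (H4) and (H5), and all three will be established by the same device, a compactness argument based at $\tau=0$: if along a sequence $\tau_j\to 0+$ a competitor matrix $A_j$ has $\limsup_j W_{\tau_j}(A_j)\le 0$ and a subsequential limit $A$, then joint lower semicontinuity gives $W_0(A)\le\liminf_j W_{\tau_j}(A_j)\le 0$, hence $W_0(A)=0$ (as $W_0\ge 0$) and so $A\in W_0^{-1}(0)=K_1\cup K_2$ by $(\rm{H}2)'$. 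Throughout I use that, since $0<\ep<\ep_0$, the sets $N_\ep(K_1)$ and $N_\ep(K_2)$ are disjoint (this is part of the choice of $\ep_0$ in Theorem \ref{transest2}); consequently $N_\ep(K_1)\cap K_2=\emptyset$, and $K_1\cup K_2$ is contained in the interior of $N_\ep(K_1)\cup N_\ep(K_2)$, hence is disjoint from the closure of the complement of that set.

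For (H3): the inequality $W_\tau\ge 0$ on $N_\ep(K_1)$ is already part of $(\rm{H}3)'$, so only $\min_{A\in N_{\ep/2}(K_1)}W_\tau(A)=0$ remains. Since $W_\tau$ is lower semicontinuous and $N_\ep(K_1)$ is compact, $(\rm{H}3)'$ furnishes a minimizer $A_\tau\in N_\ep(K_1)$ with $W_\tau(A_\tau)=0$. I would argue by contradiction that for all sufficiently small $\tau>0$ such a minimizer can be taken inside $N_{\ep/2}(K_1)$: otherwise there is $\tau_j\to 0+$ with $W_{\tau_j}>0$ on $N_{\ep/2}(K_1)$, so the corresponding $A_{\tau_j}$ lie in the closed annulus $\{\,\ep/2\le\dist(A,K_1)\le\ep\,\}$; a subsequential limit $A$ still lies in that annulus, hence $A\notin K_1$, and $A\in N_\ep(K_1)$ gives $A\notin K_2$, so $A\notin K_1\cup K_2$; but the device above forces $A\in K_1\cup K_2$, a contradiction. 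Combined with $W_\tau\ge 0$ this yields $\min_{N_{\ep/2}(K_1)}W_\tau=0$, which is (H3).

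For (H4) and (H5): set $m(\tau)=\inf_{A\in N_\ep(K_2)}W_\tau(A)$, which lies in $[c_0,+\infty]$ by $(\rm{H}4)'$; if $m(\tau)=+\infty$ then (H4) is trivial for that $\tau$. The same scheme — a sequence $\tau_j\to 0+$ with $m(\tau_j)\to L<0$, matrices $A_j\in N_\ep(K_2)$ with $W_{\tau_j}(A_j)\le m(\tau_j)+1/j\to L$, a subsequential limit $A\in N_\ep(K_2)$, and $W_0(A)\le L<0$ contradicting $W_0\ge 0$ — shows $\liminf_{\tau\to 0+}m(\tau)\ge 0$. Then $\delta(\tau):=\max(-m(\tau),\tau)>0$ satisfies $W_\tau\ge m(\tau)\ge-\delta(\tau)$ on $N_\ep(K_2)$ and $\delta(\tau)\to 0$ as $\tau\to 0+$, giving (H4) and \eqref{meta4}. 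For (H5), suppose no $\alpha>0$ works uniformly for all small $\tau$; then there are $\tau_j\to 0+$ and $A_j\in[N_\ep(K_1)\cup N_\ep(K_2)]^c$ with $W_{\tau_j}(A_j)<1/j$. By the coercivity $(\rm{H}4)'$ the $A_j$ are bounded, so a subsequence converges to some $A$ in the closure of $[N_\ep(K_1)\cup N_\ep(K_2)]^c$, whence $A\notin K_1\cup K_2$; but $W_0(A)\le 0$ forces $A\in K_1\cup K_2$, a contradiction. This produces a \emph{fixed} $\alpha>0$ and a threshold $\tau_0>0$ such that (H5) holds for all $\tau\in(0,\tau_0)$; shrinking $\tau_0$ to also accommodate (H3) completes the verification.

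The steps are all routine; the only points needing care are (i) checking in each contradiction argument that the limit matrix $A$ avoids $K_1\cup K_2$, which is exactly where the disjointness $N_\ep(K_1)\cap N_\ep(K_2)=\emptyset$ (from $\ep<\ep_0$) is used, and (ii) in (H5), extracting a convergent subsequence of the $A_j$, which is where the coercivity $(\rm{H}4)'$ enters. The hypothesis that really does the work is the \emph{joint} lower semicontinuity in $(\rm{H}1)'$: lower semicontinuity in $A$ for each fixed $\tau$ together with continuity in $\tau$ would not let these limits pass uniformly as $\tau\to 0$.
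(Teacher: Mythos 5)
Your proposal is correct and follows essentially the same route as the paper: (H1), (H2) are immediate, and (H3), (H4), (H5) are each obtained by the same compactness-plus-joint-lower-semicontinuity contradiction at $\tau=0$, using the coercivity $(\rm{H}4)'$ for boundedness, the disjointness of $N_\ep(K_1)$ and $N_\ep(K_2)$, and the definition $\delta(\tau)=\max(-m(\tau),\tau)$, exactly as in the paper (which uses the attained minimizers $A_\tau$, $B_\tau$, $C_\tau$ in place of your near-minimizing sequences). Your explicit checks that the limit matrix avoids $K_2$ in (H3) and avoids $K_1\cup K_2$ in (H5) are points the paper passes over more quickly, but the argument is the same.
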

\begin{proof}
Clearly $W_\tau$ satisfies (H1), (H2). To prove (H3) note that by (H3)$'$ there exists $A_\tau\in N_\ep(K_1)$ with $W_\tau(A_\tau)=0$. We claim that $A_\tau\in N_{\ep/2}(K_1)$ for $\tau$ sufficiently small. If not, there would exist $\tau_j\arr 0$ with $\dist(A_{\tau_j},K_1)>\ep/2$ for all $j$, and we can suppose that $A_{\tau_j}\arr A\not\in N_{\ep/4}(K_1)$. But then by (H1)$'$
\be
\lbl{meta5}
0=\liminf_{j\arr\infty}W_{\tau_j}(A_{\tau_j})\geq W_0(A),
\ee
and so by (H2)$'$ $A\in K_1$, a contradiction.

To prove (H4) note that by (H1$'$), (H4$'$), $W_\tau$ attains a minimum on $N_\varepsilon(K_2)$ at some $B_\tau$, so that $W_\tau(A)\geq -\delta(\tau)$ for $A\in N_\varepsilon(K_2)$, where $$\delta(\tau)=\max\{-W_\tau(B_\tau),\tau\}>0.$$ Letting $\tau\arr 0+$ we have by (H1$'$) that $0\leq W_0(B)\leq \liminf_{\tau\arr 0+}W_\tau(B_\tau)$ for some $B\in N_\varepsilon(K_2)$ and so $\lim_{\tau\arr 0+}\delta(\tau)=0$.

To prove (H5) note that by (H1$'$), (H4$'$), $W_\tau$ attains a minimum on the closure of ${[N_\varepsilon(K_1)\cup N_\varepsilon(K_2)]^c}$ at some $C_\tau$, where $C_\tau$ is bounded for sufficiently small $\tau$. If (H5) were false then there would exist a sequence $\tau_j\arr 0+$ with $W_{\tau_j}(C_{\tau_j})\leq 1/j$ and we may assume that $C_j\arr C\not\in K_1\cup K_2$. But then (H2$'$) and (H1$'$) imply that $0< W_0(C)\leq \liminf_{j\arr \infty}W_{\tau_j}(C_j)\leq 0,$ a contradiction.
\end{proof}

\begin{thm}
\lbl{metstab}
Let  $\om$ be $C$-connected, and let $W$ satisfy ${\rm (H1)}-{\rm (H5)}$ with $\delta$ sufficiently small, so that $0<\delta<\delta_0$, where $\delta_0$ is a constant depending only on $K_1,K_2,p, E(C), \mL^n(C)/\mL^n(\om),\ep, c_0, c_1$ and $\alpha$. Let $\nu^*=(\nu_{x}^*)_{x\in\om}$ be an $L^{p}$ gradient Young measure with $\supp \nu^*_{x}\subset\{A\in N_\ep(K_1): W(A)=0\}$ and $\bar\nu^*_{x}=Dy^*(x)$, where $y^*\in W^{1,p}(\om;\R^m)$. Then there exists $\sigma>0$, depending on the above quantities and $\mL^n(\om)$, such that
\be
\lbl{meta6}
I(\nu)\geq I(\nu^*)
\ee
for any $L^{p}$ gradient Young measure $\nu=(\nu_x)_{x\in\om}$ with $\bar\nu_x=Dy(x)$ and
\be
\lbl{meta7}
\|y-y^*\|_{L^1(\om;\R^m)}< \sigma.
\ee
The inequality in \eqref{meta6} is strict unless $\supp\nu_x\subset\{A\in N_\ep(K_1):W(A)=0\}$ for a.e. $x\in\om$.
\end{thm}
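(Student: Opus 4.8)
Since $\supp\nu^*_x\subset\{A\in N_\ep(K_1):W(A)=0\}$, we have $\langle\nu^*_x,W\rangle=0$ a.e., so $I(\nu^*)=0$ and \eqref{meta6} is the assertion that $I(\nu)\geq0$, with equality forcing the stated support property. Write $T=[N_\ep(K_1)\cup N_\ep(K_2)]^c$ (shrinking $\ep\in(0,\ep_0)$ if need be so that $N_\ep(K_1),N_\ep(K_2)$ are disjoint and, by Corollary \ref{epnbhds}, incompatible), and for an $L^p$ gradient Young measure $\nu$ put $m_1=\int_\om\nu_x(N_\ep(K_1))\,dx$, $m_2=\int_\om\nu_x(N_\ep(K_2))\,dx$, $m_3=\int_\om\nu_x(T)\,dx$, so $m_1+m_2+m_3=\mL^n(\om)$. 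From (H5) and (H2) one gets $W(A)\geq\beta(1+|A|^p)$ on $T$, where $\beta=(1-\theta^*)c_1>0$ for a suitable $\theta^*\in[0,1)$ with $\theta^*\alpha+(1-\theta^*)(c_0-c_1)\geq0$; thus $\beta$ depends only on $\alpha,c_0,c_1$. Splitting $I(\nu)$ over $N_\ep(K_1)$, $N_\ep(K_2)$, $T$ and using (H3), (H4), and the transition layer estimate (Theorem \ref{transym1}, applicable since $\om$ is $C$-connected) on the last piece,
\begin{equation}
\lbl{metaplan1}
I(\nu)\geq-\delta m_2+\beta\gamma\min(m_1,m_2),\qquad I(\nu)\geq-\delta m_2+\alpha m_3,
\end{equation}
where $\gamma=\gamma(E(C),K_1,K_2,p,\mL^n(C)/\mL^n(\om))$ comes from Theorem \ref{transym1}. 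Hence if $m_1\geq m_2$ and $\delta<\beta\gamma$ the first inequality gives $I(\nu)\geq(\beta\gamma-\delta)m_2\geq0$; equality forces $m_2=0$, and then revisiting the splitting with $W\geq0$ on $N_\ep(K_1)$ and $W\geq\alpha>0$ on $T$ forces $\supp\nu_x\subset\{A\in N_\ep(K_1):W(A)=0\}$ a.e. It therefore suffices to prove: \emph{there is $\delta_0>0$ depending only on the quantities named in the statement such that, for each $\delta<\delta_0$, some $\sigma>0$ has the property that every $L^p$ gradient Young measure $\nu$ with $\bar\nu_x=Dy(x)$, $\|y-y^*\|_{L^1(\om;\R^m)}<\sigma$ and $I(\nu)\leq0$ satisfies $m_1\geq m_2$.}

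I would prove this by contradiction. Were it false for some admissible $\delta<\delta_0$ ($\delta_0$ to be fixed below), there would be $\nu^{(k)},y^{(k)}$ with $\|y^{(k)}-y^*\|_{L^1}<1/k$, $I(\nu^{(k)})\leq0$ and $m_1^{(k)}<m_2^{(k)}$. If $c_0\geq0$ then $I\geq0$ always and there is nothing to prove, so assume $c_0<0$; then $I(\nu^{(k)})\leq0$ and (H2) give $\int_\om\langle\nu_x^{(k)},|A|^p\rangle\,dx\leq M:=-c_0\mL^n(\om)/c_1$, so by Jensen $Dy^{(k)}$ is bounded in $L^p$, and since $\|y^{(k)}\|_{L^1}\leq1+\|y^*\|_{L^1}$ and $\om$ satisfies the cone condition (Proposition \ref{cone}), $y^{(k)}$ is bounded in $W^{1,p}(\om;\R^m)$. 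Passing to subsequences, $y^{(k)}\weak y^*$ in $W^{1,p}$ — the limit is $y^*$ because $y^{(k)}\to y^*$ in $L^1$ and $W^{1,p}\hookrightarrow\hookrightarrow L^1$ — so $Dy^{(k)}\weak Dy^*$ in $L^p$, and, using the uniform $p$-th moment bound, $\nu^{(k)}$ converges to an $L^p$ gradient Young measure $\nu$ with $\bar\nu_x=Dy^*(x)$ (Theorem \ref{kp} and the compactness theory of Young measures). Since $\supp\nu^*_x$ lies in the compact set $\{A\in N_\ep(K_1):W(A)=0\}$, we get $\bar\nu_x=Dy^*(x)\in N_\ep(K_1)^{\qc}$ a.e. Moreover $m_1^{(k)}<m_2^{(k)}\leq\mL^n(\om)$ and \eqref{metaplan1} force $m_1^{(k)}<\frac{\delta}{\beta\gamma}\mL^n(\om)$ and $m_3^{(k)}<\frac{\delta}{\alpha}\mL^n(\om)$, while $W\geq\beta|A|^p$ on $T$ gives $\int_\om\int_T|A|^p\,d\nu_x^{(k)}\,dx\leq\frac1\beta(I(\nu^{(k)})+\delta m_2^{(k)})\leq\frac\delta\beta\mL^n(\om)$; by lower semicontinuity of Young-measure convergence against nonnegative lower semicontinuous integrands (bumps near $K_1$, $K_2$ and $|A|^p\1_T$, with $\ep$ perturbed slightly so the spheres $\{\dist(\cdot,K_i)=\ep\}$ carry no $\nu$-mass) the analogous three bounds hold for $\nu$ itself.

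The remaining ingredient is a separating quasiconvex function. As $N_\ep(K_1)$ and $N_\ep(K_2)$ are incompatible, Theorem \ref{homcom} with Lemma \ref{qcequiv} shows that the function $\varphi$ equal to $1$ on $N_\ep(K_1)^{\qc}$, $0$ on $N_\ep(K_2)^{\qc}$ and $+\infty$ elsewhere is quasiconvex; its effective domain being bounded, $\varphi=\lim_j\varphi^{(j)}$ for a nondecreasing sequence of continuous quasiconvex $\varphi^{(j)}:M^{m\times n}\to\R$ which — capping the functions built in the proof of Lemma \ref{qcequiv} by $j(1+|A|^p)$ — may be assumed to satisfy $-C_1\leq\varphi^{(j)}(A)\leq j(1+|A|^p)$, so that Theorem \ref{kp}(ii) applies to each $\varphi^{(j)}$ and $\nu$. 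By Dini's theorem $\varphi^{(j)}\to1$ uniformly on the compact set $N_\ep(K_1)^{\qc}$, so fix $j_0=j_0(\ep,K_1,K_2,p)$ with $\varphi^{(j_0)}\geq\half$ on $N_\ep(K_1)^{\qc}$; also $\varphi^{(j_0)}\leq1$ there and $\varphi^{(j_0)}\leq0$ on $N_\ep(K_2)^{\qc}$. Using $\bar\nu_x\in N_\ep(K_1)^{\qc}$, the Jensen inequality of Theorem \ref{kp}(ii), and the splitting $M^{m\times n}=N_\ep(K_1)\cup N_\ep(K_2)\cup T$ with $\varphi^{(j_0)}$ bounded by $1$, $0$ and $j_0(1+|A|^p)$ on the three pieces,
\begin{equation}
\lbl{metaplan2}
\half\mL^n(\om)\leq\int_\om\langle\nu_x,\varphi^{(j_0)}\rangle\,dx\leq m_1+j_0 m_3+j_0\int_\om\int_T|A|^p\,d\nu_x\,dx\leq\Big(\tfrac{1}{\beta\gamma}+\tfrac{j_0}{\alpha}+\tfrac{j_0}{\beta}\Big)\delta\,\mL^n(\om),
\end{equation}
the last step using the limit bounds of the previous paragraph. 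Choosing $\delta_0=\min\!\big(\beta\gamma,\tfrac12(\tfrac{1}{\beta\gamma}+\tfrac{j_0}{\alpha}+\tfrac{j_0}{\beta})^{-1}\big)$ makes \eqref{metaplan2} contradictory for $\delta<\delta_0$, which proves the claim; note that $\delta_0$ depends exactly on $K_1,K_2,p,E(C),\mL^n(C)/\mL^n(\om),\ep,c_0,c_1,\alpha$, and the $\sigma$ it produces depends in addition on $\mL^n(\om)$ through the compactness step. Granting the claim, any admissible $\nu$ with $\|y-y^*\|_{L^1}<\sigma$ has $I(\nu)>0=I(\nu^*)$ or else $I(\nu)\leq0$, in which case $m_1\geq m_2$, hence $I(\nu)=0=I(\nu^*)$ together with the support conclusion, by the first paragraph. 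I expect the principal difficulty to lie in the compactness step — producing the limit $L^p$ gradient Young measure $\nu$ with $\bar\nu_x=Dy^*(x)$ and transferring the mass and moment estimates to it with the correct neighbourhood radii — and in the subsidiary point of arranging genuine $p$-growth for the $\varphi^{(j)}$ so that Theorem \ref{kp}(ii) is applicable.
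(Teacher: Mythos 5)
Your overall strategy---split $I(\nu)$ over $N_\ep(K_1)$, $N_\ep(K_2)$ and the transition set, use (H3)--(H5) together with the Young-measure transition layer estimate (Theorem \ref{transym1}) and a lower bound $W\geq\beta(1+|A|^p)$ off the two neighbourhoods, then deal with the dangerous case (small mass near $K_1$) by a compactness argument exploiting the disjointness of $N_\ep(K_1)^{\rm qc}$ and $N_\ep(K_2)^{\rm qc}$---is the same in spirit as the paper's, and your case $m_1\geq m_2$ is essentially the paper's second case. Where you genuinely differ is in how the dangerous case is closed: the paper isolates it as a quantitative lemma (Lemma \ref{meta7a}), stated uniformly over \emph{all} $z$ with $Dz\in N_\ep(K_1)^{\rm qc}$, phrased purely in terms of mass/moment smallness (no $W$), and proved after rescaling to a reference ball, so that one can simply take $\sigma=\beta\Delta\mL^n(\om)^{\frac{n+1}{n}}$; you instead run the contradiction directly on $\om$ with $y^*$ and $W$ fixed and finish with a separating quasiconvex function $\varphi^{(j_0)}$ (Theorem \ref{homcom}, Lemma \ref{qcequiv}, Dini) and Jensen's inequality (Theorem \ref{kp}(ii)) for the limit measure. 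That mechanism is sound, and your $\delta_0$ does depend only on the admissible quantities.

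The gap is the claimed dependence of $\sigma$. The theorem asserts $\sigma$ can be chosen depending only on $K_1,K_2,p,E(C),\mL^n(C)/\mL^n(\om),\ep,c_0,c_1,\alpha$ and $\mL^n(\om)$; your contradiction sequences are taken for the fixed $\om$, the fixed $W$ and the fixed $y^*$, so all you obtain is \emph{some} $\sigma>0$ for those data, a priori depending on $y^*$ itself, on $W$ beyond its constants, and on the detailed shape of $\om$. Your closing remark that $\sigma$ ``depends in addition on $\mL^n(\om)$ through the compactness step'' is unsubstantiated: a soft contradiction argument yields no quantitative control whatsoever. To recover the stated uniformity you must quantify the compactness step over all $z$ with $Dz\in N_\ep(K_1)^{\rm qc}$ rather than the single $y^*$, strip $W$ out by using only the mass/moment bounds you derive from $I(\nu)\leq 0$, and remove the domain dependence by rescaling to a fixed ball---which is precisely Lemma \ref{meta7a}. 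Two smaller repairs: the shortcut ``if $c_0\geq 0$ there is nothing to prove'' is false as stated (if $c_0=0$ then $I(\nu)\leq 0$ forces $\nu_x=\delta_0$ a.e., and if $0\in N_\ep(K_2)$ you still need the $L^1$-closeness to $y^*$ plus disjointness of the quasiconvex hulls to exclude it---easy inside your own contradiction framework, but it must be argued); and you cannot cap the quasiconvex $\varphi^{(j)}$ by $j(1+|A|^p)$ directly, since the minimum of two quasiconvex functions need not be quasiconvex---the truncation must be applied to the continuous approximations $\psi^{(j)}$ in the proof of Lemma \ref{qcequiv} before quasiconvexifying, which does yield the growth needed for Theorem \ref{kp}(ii).
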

We will use the following lemmas.
\begin{lem}
\lbl{meta7a} Let $\om$ be $C$-connected.
There exist $\Delta>0$ depending only on $K_1, K_2, p, E, \ep$ and $\mL^n(C)/\mL^n(\om)$, and $\beta>0$ depending only on the eccentricity $E(C)$ and $\mL^n(C)/\mL^n(\om)$, such that if  $\nu=(\nu_x)_{x\in\om}$ is an $L^{p}$ gradient Young measure with $\bar\nu_x=Dy(x)$ for $y\in W^{1,p}(\Omega; \R^m)$ and
\be
\lbl{meta8}\hspace{-.25in}
\int_\Omega\int_{[N_\ep(K_1)\cup N_\ep(K_2)]^c}(1+|A|^p)\,d\nu_x(A)\,dx+\int_\Omega \nu_x(N_\ep(K_1))\,dx<\Delta\mL^n(\om)
\ee
then
\be
\lbl{meta9}
\|y-z\|_{L^1(\Omega;\R^m)}>\beta\Delta\mL^n(\om)^\frac{n+1}{n}
\ee
for all $z\in W^{1,p}(\Omega; \R^m)$ with $Dz(x)\in N_\ep(K_1)^{\rm qc}$ a.e. in $\Omega$.
\end{lem}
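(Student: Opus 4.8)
The plan is to exploit that the smallness hypothesis \eqref{meta8} forces the gradient average $\avsmall_{\tilde C}Dy$ over \emph{every} congruent copy $\tilde C$ of $C$ inside $\om$ to lie in a small neighbourhood of the quasiconvexified product well $Q_2:=N_\ep(K_2)^{\rm qc}$, whereas every $z$ admissible in \eqref{meta9} satisfies $\avsmall_{\tilde C}Dz\in Q_1:=N_\ep(K_1)^{\rm qc}$; since incompatibility makes $Q_1,Q_2$ disjoint compact sets, $y$ and $z$ then have a gradient--average gap of definite size on every such copy, and a covering argument over the copies furnished by $C$-connectedness turns this into the $L^1$ lower bound. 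To set up: shrinking $\ep_0$ in Theorem \ref{transest2} if necessary, Corollary \ref{epnbhds} makes $N_\ep(K_1),N_\ep(K_2)$ incompatible, so by Lemma \ref{prop1} the compact sets $Q_1,Q_2$ are disjoint; put $3d:=\dist(Q_1,Q_2)>0$. Let $g:=(\Dist_{N_\ep(K_2)})^{\rm qc}$, a finite continuous quasiconvex function with $g\ge 0$, $g^{-1}(0)=Q_2$ (Proposition \ref{qcequiva}), $g\le\Dist_{N_\ep(K_2)}$, and $g(A)\ge|A|-\max_{N_\ep(K_2)}|\cdot|$ (insert an arbitrary $\theta\in W_0^{1,\infty}$ in the quasiconvex-envelope formula and use Jensen); coercivity makes $\{g\le t\}$ compact and forces it to decrease to $Q_2$ as $t\downarrow 0$, so there is $t_0=t_0(K_1,K_2,\ep)>0$ with $\{g\le t_0\}\subset N_d(Q_2)$.

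Now fix a congruent copy $\tilde C$ of $C$ in $\om$ (these cover $\om$, since $C$-connected $\Rightarrow$ $C$-filled). The homogeneous $L^p$ gradient Young measure $\mu:=\avsmall_{\tilde C}\nu_x\,dx$ has barycenter $\avsmall_{\tilde C}Dy$ (\cite[Thm 3.1]{kinderlehrerpedregal94}), and since $g=0$ on $N_\ep(K_2)$, $g\le$ const on the compact set $N_\ep(K_1)$, and $g(A)\le c_3(1+|A|^p)$ on $T:=[N_\ep(K_1)\cup N_\ep(K_2)]^c$, the quasiconvexity inequality (Theorem \ref{kp}(ii)) gives
$$g\Big(\avsmall_{\tilde C}Dy\Big)=g(\bar\mu)\le\langle\mu,g\rangle=\avsmall_{\tilde C}\langle\nu_x,g\rangle\,dx\le\frac{c_*}{\mL^n(C)}\int_\om\Big(\nu_x(N_\ep(K_1))+\int_T(1+|A|^p)\,d\nu_x\Big)dx<\frac{c_*\,\Delta\,\mL^n(\om)}{\mL^n(C)},$$
with $c_*=c_*(K_1,K_2,\ep,p)$, the last step being \eqref{meta8}. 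Hence, if $\Delta<\Delta_1:=t_0\,\mL^n(C)/(c_*\,\mL^n(\om))$ — a bound depending only on $K_1,K_2,p,\ep$ and $\mL^n(C)/\mL^n(\om)$ — then $\avsmall_{\tilde C}Dy\in N_d(Q_2)$. On the other hand $Dz\in Q_1$ a.e., so $\avsmall_{\tilde C}Dz$ is the barycenter of a homogeneous gradient Young measure supported in the quasiconvex set $Q_1$, whence $\avsmall_{\tilde C}Dz\in Q_1^{\rm qc}=Q_1$. Therefore $|\avsmall_{\tilde C}D(y-z)|\ge\dist(N_d(Q_2),Q_1)\ge 3d-d=2d$ for every congruent copy $\tilde C$ of $C$ contained in $\om$.

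It remains to convert this uniform gradient--average gap into a lower bound on $\|y-z\|_{L^1(\om)}$. Writing $w=y-z$, the divergence theorem gives, for each copy $\tilde C=a+RC\subset\om$, $\int_{\partial\tilde C}|w|\,d\mathcal H^{n-1}\ge|\int_{\tilde C}Dw|=\mL^n(C)\,|\avsmall_{\tilde C}Dw|\ge 2d\,\mL^n(C)$. Averaging this inequality over the congruence class of copies of $C$ that fill $\om$ (integrating the rigid-motion parameter over the placements inside $\om$ and using Fubini to push the surface integrals onto $\om$) bounds the left-hand side by $\|w\|_{L^1(\om)}$ times a purely geometric factor controlled by $\mathcal H^{n-1}(\partial C)$ and the measure of the placement set, while the right-hand side stays $\ge 2d\,\mL^n(C)$ times that measure; using that $C$ is convex so $\mathcal H^{n-1}(\partial C)\lesssim R(C)^{n-1}\lesssim(1-E(C)^2)^{-(n-1)/2}\mL^n(C)^{(n-1)/n}$ and $\mL^n(C)=\big(\mL^n(C)/\mL^n(\om)\big)\,\mL^n(\om)$, one obtains $\|y-z\|_{L^1(\om)}\ge\beta\,d\,\mL^n(\om)^{(n+1)/n}$ with $\beta>0$ depending only on $E(C)$ and $\mL^n(C)/\mL^n(\om)$. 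Taking $\Delta:=\tfrac12\min(\Delta_1,d)$, which depends only on the quantities permitted in the statement, the hypothesis \eqref{meta8} then yields $\|y-z\|_{L^1(\om)}\ge\beta d\,\mL^n(\om)^{(n+1)/n}>\beta\,\Delta\,\mL^n(\om)^{(n+1)/n}$, i.e. \eqref{meta9}.

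The step I expect to be the main obstacle is the last one: a single copy of $C$ controls only the trace of $y-z$ on its boundary, so the definite gradient--average gap must be harvested simultaneously over the whole family of congruent copies that $C$-connectedness supplies. Making the Fubini/rigid-motion bookkeeping precise — and extracting clean surface-to-volume constants for $C$ in terms of $E(C)$ and $\mL^n(C)/\mL^n(\om)$, which is exactly where the stated dependence of $\beta$ originates — is the technical heart of the argument; a secondary point requiring care is the passage from the bound on $\langle\nu_x,g\rangle$ to the bound on $g$ evaluated at the averaged gradient, via the averaged homogeneous Young measure $\mu=\avsmall_{\tilde C}\nu_x\,dx$ and its barycenter.
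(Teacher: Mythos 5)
The first two thirds of your argument are sound and fit the paper's toolbox: $N_\ep(K_1)^\qc$ and $N_\ep(K_2)^\qc$ are indeed disjoint compact sets, $g=(\Dist_{N_\ep(K_2)})^\qc$ has the stated coercivity and zero set by Proposition \ref{qcequiva}, and Jensen's inequality applied to the averaged homogeneous Young measure $\mu=\avsmall_{\tilde C}\nu_x\,dx$ (localization plus Kinderlehrer--Pedregal averaging) legitimately converts the smallness hypothesis \eqref{meta8} into $\avsmall_{\tilde C}Dy\in N_d(Q_2)$ while $\avsmall_{\tilde C}Dz\in Q_1$, hence a gap of size $2d$ in the averaged gradients on every congruent copy $\tilde C\subset\om$. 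The genuine gap is exactly the step you flag as ``bookkeeping'': it is not bookkeeping, and as described it fails. First, a single copy gives only $\int_{\partial\tilde C}|y-z|\,d\mathcal{H}^{n-1}\ge 2d\,\mL^n(C)$, which is a bound on a boundary trace and does not bound $\|y-z\|_{L^1(\om)}$ below at all (a function supported in a layer of thickness $\eta$ along $\partial\tilde C$ has trace integral of order one but $L^1$ norm of order $\eta$). Second, your proposed harvest --- integrating over the rigid-motion placements of \emph{full-size} copies of $C$ in $\om$ --- produces $\|y-z\|_{L^1(\om)}\gtrsim d\,\mL^n(C)\,\mL^n(\mathcal A)/\mathcal{H}^{n-1}(\partial C)$, where $\mathcal A$ is the placement set; to reach the exponent $\mL^n(\om)^{(n+1)/n}$ you need $\mL^n(\mathcal A)\gtrsim\mL^n(\om)$, and nothing in $C$-connectedness gives this. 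Indeed for $\om=C$ a generic convex body (which is $C$-connected) the only placement is the identity, $\mL^n(\mathcal A)=0$, and the averaging yields nothing, while the lemma is of course still true there.

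The repair stays within your framework but must be made explicitly: fix one copy $a_0+RC\subset\om$ and run the Jensen argument on the shrunken translates $D_a=a+\tfrac12 RC$ with $a\in a_0+\tfrac12 RC$ (convexity gives $D_a\subset a_0+RC\subset\om$); each has measure $2^{-n}\mL^n(C)$, so the averaged bound survives with $\Delta$ reduced by $2^{-n}$, and now the placement set has measure $2^{-n}\mL^n(C)=2^{-n}\kappa\,\mL^n(\om)$. Your Fubini computation then gives $\|y-z\|_{L^1(\om)}\gtrsim d\,\mL^n(C)^2/\mathcal{H}^{n-1}(\partial C)\gtrsim (1-E^2)^{n/2}\kappa^{(n+1)/n}\,d\,\mL^n(\om)^{(n+1)/n}$, with constants depending only on $n$, $E(C)$ and $\kappa=\mL^n(C)/\mL^n(\om)$, which closes the proof. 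With this fix your route is genuinely different from the paper's: you obtain $\Delta$ semi-explicitly from the quasiconvex envelope of the distance function, whereas the paper first reduces by scaling to a single ball of unit volume contained in $\om$ (this is also all it uses $C$ for) and then gets $\Delta$ nonconstructively by a compactness/contradiction argument --- weak* convergence of the Young measures forces $Dy\in N_\ep(K_2)^\qc$ a.e.\ in the limit while $z^{(j)}\to y$ forces $Dy\in N_\ep(K_1)^\qc$ a.e., contradicting disjointness. Your version trades that compactness step for the quantitative averaging over scaled copies, which is a worthwhile alternative once the placement-measure issue above is handled.
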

\begin{proof}
We first claim that it suffices to prove the existence of $\Delta$ in the special case when $\om$ is the open ball $B=B(0,r_n)=B(0,(n/\omega_n)^\frac{1}{n})$ for which $\mL^n(B)=1$,  with $\beta=1$. Indeed suppose this has been proved with corresponding $\Delta=\Delta_B$ and let $\om$ be $C$-connected with $E(C)=E$ and $\mL^n(C)=\kappa\mL^n(\om)$. Then since $\om$ is $C$-filled, $\om$ contains an open ball of radius $\frac{1}{2}r(C)$, and since
$R(C)\geq \left(\frac{n\mL^n(C)}{\omega_n}\right)^\frac{1}{n}=\left(\frac{n\kappa\mL^n(\om)}{\omega_n}\right)^\frac{1}{n}$, 
  $\om$ contains an open ball $B_\rho=a+\rho B(0,1)$ of radius $$\rho=\frac{1}{2}\left(\frac{n\kappa\mL^n(\om)}{\omega_n}\right)^\frac{1}{n}(1-E^2)^\frac{1}{2}.$$
Therefore if \eqref{meta8} holds with $\Delta$ given by $\Delta(E,\kappa)=2^{-n}\kappa(1-E^2)^\frac{n}{2}\Delta_B$ then
\be
\int_{B_\rho}\int_{[N_\ep(K_1)\cup N_\ep(K_2)]^c} (1+|A|^p)\,d\nu_x(A)\,dx &+&\nonumber\\ &&\hspace{-1.6in}\int_{B_\rho}  \nu_x(N_\ep(K_1))\,dx<2^{-n}\kappa(1-E^2)^\frac{n}{2}\Delta_B\mL^n(\om).\lbl{meta9a}
\ee
 Define $\mu=(\mu_x)_{x\in B}$ by $\mu_x=\nu_{a+\frac{\rho}{r_n}x}$ and $\tilde y(x)=\frac{r_n}{\rho}y(a+\frac{\rho}{r_n}x)$. Then $D\tilde y(x)=\bar\mu_x$. Hence
\be
\lbl{meta9b}\hspace{-.25in}
\int_{B}\int_{[N_\ep(K_1)\cup N_\ep(K_2)]^c}\hspace{-.3in}(1+|A|^p)\,d\mu_x(A)\,dx+\int_{B} \mu_x(N_\ep(K_1))\,dx<\Delta_B.
\ee
If $z\in W^{1,p}(\om;\R^m)$ with $Dz(x)\in N_\ep(K_1)^{\rm qc}$ a.e. and $\tilde z(x)=\frac{r_n}{\rho}z(a+\frac{\rho}{r_n}x)$ we have that $D\tilde z(x)=Dz(a+\frac{\rho}{r_n}x)\in N_\ep(K_1)^{\rm qc}$ a.e. $x\in B$. Since we are assuming the result holds for $\om=B$ and $\beta=1$ we deduce that 
$$\|\tilde y-\tilde z\|_{L^1(B;\R^m)}>\Delta_B,$$
which implies that 
$$\|y-z\|_{L^1(\om;\R^m)}\geq\|y-z\|_{L^1(B_\rho;\R^m)}>\beta(\kappa,E)\Delta(\kappa,E){\mL^n(\om)}^\frac{n+1}{n},$$
where $\beta(\kappa,E)=\frac{1}{2}\kappa^\frac{1}{n}(1-E^2)^\frac{1}{2}$, proving the claim.

Suppose then that the result is false for $\om=B$ and $\beta=1$, so that it is false for $\Delta=\frac{1}{j}$ for every $j$. Then there  exist a sequence of $L^p$ gradient Young measures $\nu^{(j)}=(\nu^{(j)}_x)_{x\in B}$, and mappings $y^{(j)}\in W^{1,p}(B; \R^m)$ with $\bar\nu^{(j)}_x=Dy^{(j)}(x)$, $z^{(j)}\in W^{1,p}(B; \R^m)$ with $Dz^{(j)}(x)\in N_\ep(K_1)^{\rm qc}$ a.e. in $B$, such that
\be\hspace{-.2in}
\int_{B}\int_{[N_\ep(K_1)\cup N_\ep(K_2)]^c}(1+|A|^p)d\nu^{(j)}_x(A)\,dx
+\int_{B} \nu^{(j)}_x(N_\ep(K_1))\,dx<j^{-1}\lbl{meta10}
\ee
and
\be
\lbl{meta11}
\|y^{(j)}-z^{(j)}\|_{L^1(B;\R^m)}\leq j^{-1}.
\ee
It follows from \eqref{meta10} and the boundedness of $N_\ep(K_1), N_\ep(K_2)$ that
\be
\lbl{meta12}
\int_B\int_{M^{m\times n}}(1+|A|^p)d\nu^{(j)}_x(A)\,dx\leq M<\infty
\ee
for all $j$.
We may suppose without loss of generality that $\int_B y^{(j)}(x)\,dx=0$. We use the inequality (see Morrey \cite[p.~82]{morrey66} for similar results and proofs)
\be
\lbl{meta12a}
\hspace{-.15in}\int_B |u|^p\,dx\leq C\left(\int_B|Du|^p\,dx + \left|\int_B u\,dx\right|^p\right)\mbox { for all } u\in W^{1,p}(B;\R^m),
\ee
where $C$ is a constant. Applying \eqref{meta12a} to $y^{(j)}$, using $\bar\nu_x^{(j)}=Dy^{(j)}(x)$ and H\"older's inequality, we deduce that $y^{(j)}$ is bounded in $W^{1,p}(B;\R^m)$. 
Extracting a subsequence (not relabelled) if necessary, we may assume that $\nu^{(j)}\weakstar \nu$ in $L_w^{\infty}(B;C_0(M^{m\times n})^{*})$, 
and hence by Sychev \cite[Proposition 4.5]{sychev99} $\nu=(\nu_x)_{x\in B}$ is an $L^{p}$ gradient Young measure. Thus $\bar\nu_x=Dy(x)$ a.e. for some $y\in W^{1,p}(B;\R^m)$ with $\int_B y\,dx=0$. We claim that $y^{(j)}\weak y$ in $W^{1,p}(B;\R^m)$. To this end let $\theta_k:[0,\infty)\to [0,1]$ satisfy $\theta_k(s)=1$ for $s\in[0,k]$, $\theta_k(s)=0$ for $s\in[k+1,\infty)$. Then if $\psi\in C_0^\infty(\om)$ we have that 
\begin{eqnarray*}
\label{est1}
\limsup_{j\to \infty}\left|\int_B \psi(x)(Dy^{(j)}(x)-Dy(x))\,dx\right| &&\\&&\hspace{-1.5in}=\limsup_{j\to\infty}\left|\int_B \psi(x)\int_{M^{m\times n}}A\,d(\nu_x^{(j)}-\nu_x)(A)\,dx\right|\\&&\hspace{-1.5in}\leq\limsup_{j\to\infty}\left|\int_B \psi(x)\int_{M^{m\times n}}\theta_k(|A|)A\,d(\nu_x^{(j)}-\nu_x)(A)\,dx\right|\\
&&\hspace{-1.4in}+\limsup_{j\to\infty}\left|\int_B\psi(x)\int_{|A|\geq k}(1-\theta_k(|A|))A\,d(\nu_x^{(j)}-\nu_x)(A)\,dx\right|\\
 &&\hspace{-1.5in}\leq
\limsup_{j\to\infty}\left|\int_B|\psi(x)|\left(\int_{|A|\geq k}|A|\,d(\nu^{(j)}_x+\nu_x)(A)\right)\right|,\\
&&\hspace{-1.5in}\leq \frac{C}{k^{p-1}},
\end{eqnarray*}
for some constant $C$, where we have used $\nu^{(j)}\weakstar \nu$ in $L_w^{\infty}(B;C_0(M^{m\times n})^{*})$, \eqref{meta12} and relation (iii) of Theorem \ref{kp}. Letting $k\to\infty$ we deduce that $Dy^{(j)}\weak Dy$ in $L^p(B;M^{m\times n})$, from which the claim follows since $\int_B y^{(j)}\,dx=\int_B y\,dx=0$.
By the compactness of the embedding we have that $y^{(j)}\arr y$ strongly in $L^p(B;\R^m)$.

Note that by \eqref{meta10} we have that 
\begin{equation}
\label{meta12b}
\int_B(1-\nu_x^{(j)}(N_\ep(K_2)))\,dx\leq\frac{1}{j}.
\end{equation}
Let $\varphi_k\in C_0(M^{m\times n})$, with $0\leq\varphi_k(A)\leq 1$,  $\varphi_{k+1}(A)\leq \varphi_k(A)$ and \\ $\lim_{k\arr\infty}\varphi_k(A)=\chi_{N_\ep(K_2)}(A)$ for all $A\in M^{m\times n}$,  where $\chi_{N_\ep(K_2)}$ is the characteristic function of $N_\ep(K_2)$. Then by \eqref{meta12b} we have that
$$\lim_{j\arr\infty}\int_B\int_{M^{m\times n}}(1-\varphi_k(A))\,d\nu^{(j)}_{x}(A)\,dx=0,$$
and so by the weak* convergence of $\nu^{(j)}$ we deduce that
$$\int_B\int_{M^{m\times n}}(1-\varphi_k(A))d\nu_x(A)\,dx=0.$$
Passing to the limit $k\arr\infty$ using monotone convergence we obtain
$$\int_B[1-\nu_x(N_\ep(K_2))]\,dx=0.$$
Thus $\supp \nu_x\subset N_\ep(K_2)$ a.e. in $\Omega$. In particular $Dy(x)\in N_\ep(K_2)^{\rm qc}$ a.e. in $B$.

But from \eqref{meta11} we deduce that $z^{(j)}\arr y$ in $L^1(B;\R^m)$. Since $Dz^{(j)}\in N_\ep(K_1)^{\rm qc}$ it follows that $Dz^{(j)}\weakstar Dy$ in $L^\infty(B;M^{m\times n})$ and thus $Dy(x)\in N_\ep(K_1)^{\rm qc}$. But  $N_\ep(K_1)^{\rm qc}$ and $N_\ep(K_2)^{\rm qc}$ are disjoint by Corollary \ref{quasi}, giving the desired contradiction.
\end{proof}
\begin{lem}
\lbl{meta13}
Let $W$ satisfy $\rm (H2)$ and $\rm (H5)$. Then
\be
\lbl{meta14}
W(A)\geq K(1+|A|^p) \mbox{ for all } A\in [N_\ep(K_1)\cup N_\ep(K_2)]^c,
\ee
where
$$ K=\left\{\begin{array}{ll}c_1&\mbox{if }c_0\geq c_1,\\
c_0&\mbox{if }\alpha \leq c_0<c_1,\\
\frac{\alpha c_1}{\alpha +c_1-c_0}&\mbox{if }\alpha>c_0, c_1>c_0.
\end{array}\right.$$
\end{lem}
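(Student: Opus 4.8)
The plan is to reduce \eqref{meta14} to an elementary one‑variable inequality. On the set $[N_\ep(K_1)\cup N_\ep(K_2)]^c$ hypotheses (H2) and (H5) together give, for every such $A$,
$$W(A)\geq\max\bigl(c_0+c_1|A|^p,\;\alpha\bigr),$$
the first entry coming from (H2) (which holds everywhere) and the second from (H5). Writing $t=|A|^p\in[0,\infty)$, it therefore suffices to establish the scalar inequality $\max(c_0+c_1t,\alpha)\geq K(1+t)$ for all $t\geq 0$, with $K$ the constant displayed in the statement. In fact $K=\inf_{t\geq 0}\frac{\max(c_0+c_1t,\alpha)}{1+t}$, and the three cases in the statement correspond exactly to the three ways in which this infimum is attained; so the real content is just to verify this case by case and, along the way, that $K>0$.

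Then I would carry out the three cases. If $c_0\geq c_1$ then $c_0+c_1t\geq c_1(1+t)$ for all $t\geq 0$, so in this case even (H5) is not needed and $K=c_1>0$ works. If $\alpha\leq c_0<c_1$ then $c_0+c_1t\geq c_0(1+t)$ for all $t\geq 0$, and $c_0\geq\alpha>0$, so $K=c_0$ works (here too (H5) is not strictly required). The only genuinely two‑sided case is $\alpha>c_0$ and $c_1>c_0$: set $t^*=(\alpha-c_0)/c_1>0$, the value at which $c_0+c_1t^*=\alpha$. For $0\leq t\leq t^*$ one has $\max(c_0+c_1t,\alpha)=\alpha$ and $\frac{\alpha}{1+t}\geq\frac{\alpha}{1+t^*}=\frac{\alpha c_1}{\alpha+c_1-c_0}=K$; for $t\geq t^*$ one has $\max(c_0+c_1t,\alpha)\geq c_0+c_1t$ and, since $t\mapsto\frac{c_0+c_1t}{1+t}$ has derivative of the sign of $c_1-c_0>0$ and hence is nondecreasing, $\frac{c_0+c_1t}{1+t}\geq\frac{c_0+c_1t^*}{1+t^*}=\frac{\alpha}{1+t^*}=K$. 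Since $\alpha,c_1>0$ and $\alpha+c_1-c_0>0$, indeed $K>0$.

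There is no real obstacle here; the only points requiring care are that the three listed cases are exhaustive — if $c_0<c_1$ then either $\alpha\leq c_0$ or $\alpha>c_0$, and in the latter case automatically $c_1>c_0$ — and that $K$ is strictly positive in each, which follows from $\alpha>0$ and $c_1>0$ together with the respective case hypotheses.
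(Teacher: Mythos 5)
Your argument is correct, and it is exactly the verification the paper leaves out: the paper's proof of this lemma is literally the single sentence ``This is elementary.'' Your reduction to the scalar inequality $\max(c_0+c_1t,\alpha)\geq K(1+t)$ for $t=|A|^p\geq 0$, with the three cases handled as you do (in particular the crossing point $t^*=(\alpha-c_0)/c_1$ and the monotonicity of $t\mapsto (c_0+c_1t)/(1+t)$ when $c_1>c_0$), is sound, the cases are exhaustive, and $K>0$ in each, so nothing is missing.
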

\begin{proof}This is elementary.
\end{proof}
{\it Proof of Theorem {\rm \ref{metstab}}}\hspace{.1in}
With $\Delta, \beta, K$ chosen as in Lemmas \ref{meta7a}, \ref{meta13} respectively, and $\gamma>0$ the constant in the transition layer estimate \eqref{4.9}, choose $\delta>0$ with
\be
\lbl{meta15}
\delta<\frac{K}{2}\min\left(\gamma,\Delta\min(1,\gamma)\right),
\ee
and let $\sigma=\beta\Delta\mL^n(\om)^\frac{n+1}{n}$.

For $\nu,\nu^*$ as in the statement of the theorem we have that
\be
\nonumber I(\nu)-I(\nu^*)&=&I(\nu)-0\\
\nonumber &=&\int_\Omega\int_{N_\ep(K_1)}W(A)\,d\nu_x(A)\,dx +\int_\Omega\int_{N_\ep(K_2)}W(A)\,d\nu_x(A)\,dx \\
&& \nonumber\hspace{1.2in}+\int_\Omega\int_{[N_\ep(K_1)\cup N_\ep(K_2)]^c}W(A)\,d\nu_x(A)\,dx\\
\nonumber&\geq& 0-\delta\int_\Omega\nu_x(N_\ep(K_2))\,dx\\
\nonumber&&\hspace{1in}+K\int_\Omega\int_{[N_\ep(K_1)\cup N_\ep(K_2)]^c}(1+|A|^p)\,d\nu_x(A)\,dx\\ \nonumber
&\geq &-\delta\int_\Omega\nu_x(N_\ep(K_2))\,dx\\
\nonumber &&\hspace{.2in}+\frac{K}{2}\int_\Omega\int_{[N_\ep(K_1)\cup N_\ep(K_2)]^c}(1+|A|^p)\,d\nu_x(A)\,dx \\
&& \hspace{.2in}+\frac{K}{2}\gamma \min\left(\int_\Omega \nu_x(N_\ep(K_1))\,dx,\int_\Omega \nu_x(N_\ep(K_2))\,dx\right).\lbl{meta16}
\ee
If $\int_\Omega \nu_x(N_\ep(K_1))\,dx\leq \int_\Omega\nu_x(N_\ep(K_2))\,dx$ then, since $Dy^*(x)\in N_\ep(K_1)^{\rm qc}$, by Lemma \ref{meta7a} we have that
$$\int_\Omega\int_{[N_\ep(K_1)\cup N_\ep(K_2]^c}(1+|A|^p)d\nu_x(A)\,dx+\int_\Omega \nu_x(N_\ep(K_1))\,dx\geq\Delta\mL^n(\om),$$
and hence by \eqref{meta15}, \eqref{meta16}
\be
\lbl{meta17}
I(\nu)-I(\nu^*)\geq -\delta\int_\Omega\nu_x(N_\ep(K_2))\,dx+\frac{K}{2}\min(1,\gamma)\Delta\mL^n(\om)>0.
\ee
On the other hand if $\int_\Omega \nu_x(N_\ep(K_2))\,dx\leq \int_\Omega\nu_x(N_\ep(K_1))\,dx$ then
\be
\lbl{meta18}\nonumber
I(\nu)-I(\nu^*)&\geq& (\frac{K}{2}\gamma-\delta)\int_\Omega\nu_x(N_\ep(K_2)\,dx\\
&&\hspace{-.2in}+\frac{K}{2}\int_\Omega\int_{[N_\ep(K_1)\cup N_\ep(K_2)]^c}(1+|A|^p)\,d\nu_x(A)\,dx\geq 0.
\ee
From \eqref{meta17}, \eqref{meta18} we see that $I(\nu)=I(\nu^*)$ if and only if $\supp \nu_x\subset \{A\in N_\ep(K_1):W(A)=0\}$, completing the proof.

\section{Applications} \label{sectapp}
\setcounter{equation}{0}

In this section we discuss the application of the results given above to materials that undergo diffusionless phase transformations involving a change of shape, usually called martensitic phase transformations.

\subsection{Variant rearrangement under biaxial stress} \label{sectvar}
\setcounter{equation}{0}

The original motivation for this paper were experiments of Chu \& James on the response of 
single crystal plates of martensitic material to biaxial stress.  The experimental details are presented elsewhere \cite{chu93,chujames93}.
In the design of these experiments attention was paid to the design of the loading device so as to 
correspond to the total free energy \be
\label{loading}
  {\mathcal E}(y) = \int_{\om} \varphi(D y(x),\theta) - T \cdot Dy(x)\, dx,
  \lbl{en}
\ee
where $y: \om \to \R^3$, $\om$ is a thin rectangular plate-like domain in $\R^3$, $\theta >0$ is the temperature, and $T = \sigma_1 e_1 \otimes e_1 + \sigma_2 e_2 \otimes e_2, \ \sigma_1>0, \sigma_2>0$ with $e_1,  e_2 \in \R^3$ (the orthonormal ``machine basis''). The first term in \eqref{loading} represents the free energy of the transforming material, 
and the second term is the loading device energy.

  In the experiments described here the temperature was held fixed at a value $\theta_0$ below the phase transformation temperature.  For this reason we henceforth drop $\theta$ from the notation.  The assigned $\sigma_1>0, \sigma_2>0$ are interpreted as the tractions (per unit reference area) applied
  to the edges of the specimen in the directions $e_1,  e_2$, respectively.  These were varied either incrementally or
  continuously during the tests.
  The material 
  was the alloy Cu-14wt.\%Al-4.0wt.\%Ni having a cubic-to-orthorhombic phase transformation, 
  leading to six variants of martensite at the test temperature.  These are modeled as 
  energy wells of $\varphi$ of the form \be
  \varphi(A) \ge 0, \quad \varphi(A) = 0 \Longleftrightarrow
  A \in \calM = SO(3)U_1 \cup \dots \cup SO(3) U_6.
\ee
with
\be
U_1 = \left(\begin{array}{ccc}\frac{\alpha + \gamma}{2} & \frac{\alpha-\gamma}{2} & 0 \\ \frac{\alpha-\gamma}{2} & \frac{\alpha + \gamma}{2} & 0 \\
0 & 0 & \beta  \end{array} \right), \quad
U_2 = \left(\begin{array}{ccc}\frac{\alpha + \gamma}{2} & \frac{\gamma-\alpha}{2} & 0 \\ \frac{\gamma-\alpha}{2} & \frac{\alpha + \gamma}{2} & 0 \\
0 & 0 & \beta  \end{array} \right), \nonumber \\
U_3 = \left(\begin{array}{ccc}\frac{\alpha + \gamma}{2} & 0 &\frac{\alpha-\gamma}{2} \\
0 & \beta & 0 \\
\frac{\alpha-\gamma}{2} & 0  & \frac{\alpha + \gamma}{2} \end{array} \right), \quad
U_4 = \left(\begin{array}{ccc}\frac{\alpha + \gamma}{2} & 0 & \frac{\gamma-\alpha}{2}\label{owells} \\
0 & \beta & 0 \\
\frac{\gamma-\alpha}{2} & 0 &  \frac{\alpha + \gamma}{2} \end{array} \right),\\
U_5 = \left(\begin{array}{ccc}  \beta & 0 & 0  \\
0  &  \frac{\alpha + \gamma}{2} & \frac{\alpha-\gamma}{2} \\
0  &  \frac{\alpha-\gamma}{2} & \frac{\alpha + \gamma}{2} \end{array} \right), \quad
U_6 = \left(\begin{array}{ccc} \beta & 0 & 0  \\
0  &  \frac{\alpha + \gamma}{2} & \frac{\gamma-\alpha}{2}  \\
0  & \frac{\gamma-\alpha}{2} & \frac{\alpha + \gamma}{2} \end{array} \right),  \nonumber \ee 
all expressed in an orthonormal basis $\hat{e}_1, \hat{e}_2,\hat{e}_3$ (the ``material basis'').  
The  measured values of $\alpha, \beta, \gamma$ for this alloy are $\alpha = 1.0619$, $\beta = 0.9178$ 
and $\gamma = 1.0230$ (Duggin and Rachinger \cite{dugginrachinger64}, Otsuka and Shimizu \cite{otsukashimizu74}). 
 The deviation of the material basis from the machine basis measures the orientation of the specimen. 
 Several orientations were
tested.

For many purposes, including the design of the orientations of crystals used in the tests, a simpler {\it constrained theory}  was used, valid in the regime that $|T|/\kappa$ is small\footnote{Using measured moduli of Yasunaga et al. \cite{yasunagaetal82,yasunagaetal83} for this alloy gives $\kappa \sim 15$ GPa.  A typical maximum value of $|T|$ in the tests was $15$ MPa, yielding $|T|/\kappa  \sim$ 15 MPa/15 GPa $= 10^{-3}$.}, $\kappa$ being the minimum eigenvalue of the linearized elasticity tensor,
linearized about $U_1$.  The constrained theory is based on the total free energy 
\be
{\mathcal E}(\nu) =\left\{ \begin{array}{ll} -\int_{\om} \int_{\calM} T \cdot A \, d\nu_x(A) dx  &\mbox{if }\supp \nu_x\subset \calM \mbox{ a.e. }x\in\om\\
+\infty&\mbox{otherwise},\end{array}\right. \lbl{cn}
 \ee 
defined on the set of $L^\infty$ gradient Young measures $\nu=(\nu_x)_{x\in\om}$.
The constrained theory has been justified as a limiting theory for Young measures of 
low energy sequences by  Forclaz \cite{forclaz14}  using $\Gamma$-convergence, but under 
assumptions not allowing $W(A)\to\infty$ as $\det A\to 0+$; the proof is  based on replacing $\varphi$ by $k \varphi$ in (\ref{en}) and letting $k \to \infty$ (a   
  similar procedure to letting $|T|/\kappa \to 0$ but which does not require additional smoothness assumptions on $\varphi$). A more general $\Gamma$-convergence analysis including the austenite energy well and allowing $W(A)\to\infty$ as $\det A\to 0+$ is given by  \cite[Proposition 1]{j68}. 

The design of orientations was based on the minimization of (\ref{cn}), which can be done in the following way by minimizing its integrand (see Chu \cite{chu93}, Chu \& James \cite{chujames93}). The machine basis was chosen in all cases such that, for all values of $\sigma_1>0, \sigma_2>0$, \be
  \min_{A \in SO(3) U_1 \cup SO(3) U_2} -T \cdot A <
  \min_{A \in SO(3) U_3 \cup \dots \cup SO(3) U_6} -T \cdot A.
\ee
In fact, the minimizer is unique
for all points in this open quadrant, except those on a smooth, strictly monotonically increasing curve $\calC: \sigma_2 = f(\sigma_1)$, $f \in C^{\infty}(0, \infty)$, which is nearly a straight line in the range of $\sigma_1, \sigma_2$ tested.  In fact, there exist functions $R_i \in C^{\infty}((0, \infty)\times (0, \infty);SO(3)),\ i = 1,2,$ such that $A = R_1(\sigma_1, \sigma_2) U_1$ is the unique minimizer of $-T \cdot A,\ A \in \calM$, for $\sigma_2 < f(\sigma_1)$ and  
$A = R_2(\sigma_1, \sigma_2) U_2$ is
its unique minimizer on $\calM$ for $\sigma_2 > f(\sigma_1)$.
The functions $R_1, R_2$  can and will be taken as the unique minimizers of $-T \cdot A $ on their respective energy wells $SO(3)U_1$, $SO(3)U_2$ on the full quadrant $\sigma_1>0, \sigma_2>0$.
There are precisely two equi-minimizers of $-T \cdot A,\ A \in \calM$, on $\calC$ given by $R_1(\sigma_1, f(\sigma_1)) U_1$ and $R_2(\sigma_1, f(\sigma_1)) U_2$.
The tests consisted of crossing the
curve $\sigma_2 = f(\sigma_1)$ by various loading programs $(\sigma_1(t), \sigma_2(t)), t>0$, and measuring 
the volume fractions of the subregions on the specimen where $Dy \in SO(3)U_1$ (variant 1) vs. $Dy \in SO(3)U_2$ (variant 2).  

The key point for this paper is that, by direct calculation of the functions $R_{1}, R_2$, 
\be \lbl{noconnection}
{\rm rank}(R_2(\sigma_1, f(\sigma_1)) U_2 - R_1(\sigma_1, f(\sigma_1)) U_1)>1
\ee for all $\sigma_1>0$ and all orientations tested.
Thus, fixing $\sigma_1 = \sigma_1^{\circ} \in (0, \infty)$, we  let $K_1 = \{ R_1(\sigma_1^{\circ}, f(\sigma_1^{\circ})) U_1 \}$, and $K_2 = \{ R_2(\sigma_1^{\circ}, f(\sigma_1^{\circ})) U_2 \}$.  By Example {\ref{twomatrices}, $K_1$ and $K_2$ are $L^p$ incompatible for $p>1$.  Letting $T_{\tau} = \sigma_1^{\circ} e_1 \otimes e_1
+ (c_2 \tau + f(\sigma_1^{\circ})) e_2 \otimes e_2$ and
$R_1^{\tau} = R_1(\sigma_1^{\circ}, c_2\tau + f(\sigma_1^{\circ}))$ for some $c_2>0$, a suitable function $W_{\tau}$ satisfying the hypotheses of Proposition \ref{Wep} for $m=n=3$ can be defined as follows:
\be
  W_{\tau}(A) = \left\{ \begin{array}{ll} -T_{\tau}\cdot (A -  R_1^{\tau}U_1) 
  \quad  &\mbox{if } A \in \calM, \\  \infty
   \quad  &\mbox{if } A \in  \calM^c.
  \end{array} \right. \lbl{cn1}
\ee
$W_{\tau}$ clearly satisfies (H1)$'$, (H2)$'$ and (H4)$'$, while (H3)$'$ is satisfied by choosing $c_2>0$ sufficiently small that $R_1^{\tau} U_1 \in N_{\ep}(K_1)$ for $0 \le \tau \le 1$. The region occupied by the specimen was approximately a thin rectangular plate, 
so we assume $\Omega$ is a rectangular solid. In particular $\om$ is $\om$-connected.
The energy density $W_{\tau}$ differs from that of the constrained theory by a trivial additive constant.  Theorem {\ref{metstab} then implies that the Young measure $\nu^{*}_{\tau} =  \delta_{R_1^{\tau} U_1}$ is metastable for sufficiently small $\tau > 0$ in the sense given there.

In this formulation we have used $\sigma_2$ as the parameter that moves the wells up and down.  One could equally well use a  parameterization of any other curve that crosses $\calC$ transversally.

Experimentally, transformation occurred by a sudden avalanche of transformation from variant 1 to variant 2 or {\it vice-versa}.
The transformation was sufficiently abrupt that a point in the $\sigma_1, \sigma_2$ plane could be associated with the transformation.  The series of points obtained in this way from diverse monotonic loading programmes, including those for which
$\sigma_1(t) = const.$, or $\sigma_2(t) = const.$, or
$\sigma_1(t) + \sigma_2(t) = const.$, all starting from a point $\sigma_1(0), \sigma_2(0)$ satisfying
$\sigma_2(0) \ll f(\sigma_1(0))$, at which the specimen was observed to be in variant 1, gave abrupt transformation to variant 2 at points lying very near a line
$\calC^+: \sigma_2 = f^+(\sigma_1) > f(\sigma_1),\ 0< a < \sigma_1 < b $.
Similarly, the same
kinds of loading programmes but run backwards, beginning from variant 2, led to transformation to variant 1 near a line
$\calC^-: \sigma_2 = f^-(\sigma_1) < f(\sigma_1),\ 0< a < \sigma_1 < b $.
For all orientations
tested, the three curves $\calC, \calC^+, \calC^-$ were nearly parallel, but the ``width of the hysteresis'', $\dist(\calC^+, \calC^-)$, varied significantly with orientation.

The concept developed in this paper is consistent with the behaviour described above.  We can examine this further by seeking an upper bound on the value of $\tau$ in (\ref{cn1}) beyond which $\nu^{*}_{\tau} = \delta_{R_1^{\tau} U_1}$ ceases to be metastable in the sense of Theorem \ref{metstab}.  As $\tau>0$ increases, there are more and more matrices $A \in SO(3)U_2$ with a negative value of the integrand $W_{\tau}(A)$. Suppose a value $\tau^+$ is reached such that for $\tau \gtrsim \tau^+$, that is $\tau\geq \tau^+$ with $\tau-\tau^+$ sufficiently small, there is a
matrix $B \in SO(3)U_2$ with rank$(B - R_1^{\tau} U_1) =1$, such that $W_{\tau}(B)< W_{\tau}(R_1^{\tau} U_1 )$.  
Then $\nu^{*}_{\tau} =  \delta_{R_1^{\tau} U_1}$ ceases to be metastable in the sense of Theorem \ref{metstab}.
In fact, it fails to be metastable even if $L^1$ in (\ref{meta7}) is replaced by $L^{\infty}$. In the case that $B - R_1^{\tau_1} U_1 = a \otimes n$, $\tau_1  \gtrsim \tau^+$, the counterexample is the family of competitors $\nu_x = \delta_{Dy_{\xi}(x)},\ \xi>0,$ defined for $x_0 \in \om$ by the $W^{1, \infty}(\om,\R^3)$ mapping \be
y_{\xi}(x) = \left\{ \begin{array}{ll} R_1^{\tau_1} U_1 (x-x_0)
  \  & \mbox{if }   (x-x_0) \cdot n<0,
  \\  B(x-x_0) \  &\mbox{if }   0 \le (x-x_0) \cdot n  \le \xi,
  \\  R_1^{\tau_1} U_1 (x-x_0) + \xi a
  \  &\mbox{if }  (x-x_0) \cdot n >\xi.
  \end{array} \right.   \nonumber
\ee
Since
$\| y_{\xi} - R_1^{\tau_1}U_1 (x-x_0) \|_{L^{1}(\om, \R^3)} \le  C\xi |a|$ for a constant $C=C(\om)$, then $\nu$ can be
made to fall into any preassigned neighbourhood of $\nu^{*}_{\tau}$ in the sense of (\ref{meta7}) of Theorem {\ref{metstab}
by making $\xi$ sufficiently small, and this competitor also works in the $L^{\infty}$ case.  
But clearly, since $W_{\tau}(B)< W_{\tau}(R_1^{\tau} U_1 )$ we have that
${\mathcal E}(\nu) < {\mathcal E}(\nu^{*}_{\tau})$, so $\nu^{*}_{\tau}$ is not metastable for $\tau \gtrsim \tau^+$.  

This qualitative argument for the sequence stable-metastable-unstable as $\tau$ increases,
in the sense discussed here, is complete if we can show that there exists $B$ with the
properties given above.  This is true by direct calculation for all the orientations 
tested.  This is done by first calculating explicitly $R_1^{\tau} U_1$, 
and then noticing that the wells $SO(3)U_1$ and $SO(3)U_2$ are compatible. That is,
even though \eqref{noconnection} holds, there are precisely two matrices 
$\hat{R}_a^{\tau}U_2, \hat{R}_b^{\tau}U_2 \in SO(3)U_2$ that differ from $R_1^{\tau} U_1$ 
by a matrix of rank 1 for $\tau>0$, and there exists a smallest value $\tau^+ > 0$ such 
that for $\tau > \tau^+$,
$W_{\tau}(B)< W_{\tau}(R_1^{\tau} U_1 )$ where $B$ is either
$\hat{R}_a^{\tau}U_2$ or $\hat{R}_b^{\tau}U_2$.  

Unless the orientation is special,
the two matrices $\hat{R}_a^{\tau}U_2$ or $\hat{R}_b^{\tau}U_2$ do not give the 
same value of $W_\tau$, suggesting a preference for one of them, assuming that
these examples deliver the point of first loss of metastability.  Let us suppose
for definiteness that the preference is for $\hat{R}_a^{\tau}U_2$, so 
$\hat{R}_a^{\tau}U_2 - R_1^{\tau} U_1= a_{\tau} \otimes n_{\tau}$ and
$W_{\tau}(\hat{R}_a^{\tau}U_2) \le W_{\tau}(R_1^{\tau} U_1 )$ for $\tau \ge \tau^+$
with equality precisely at $\tau = \tau^+$.
Combining these two conditions, we have
\be
a_{\tau^+} \cdot T_{\tau^+} n_{\tau^+} = 0.  \label{schmid}
\ee
This is formally equivalent to the well-known Schmid law (with Schmid constant 0)
\cite{schmidboas50}.
The left hand side of (\ref{schmid}) is usually interpreted as the ``critical
resolved stress  on the twin plane'', but in that case $T n$ is 
interpreted as the actual Piola-Kirchhoff traction on a pre-existing twin plane 
with unit normal
$n$ and $a = (F^+ - F^-)n$, where $F^{\pm}$ are local limiting values of the deformation
gradient.   The Schmid law prescribes a critical
value of $a \cdot T n$ at which this plane begins to move.  
The emergence of (\ref{schmid}) here 
has apparently nothing to do with stress in the specimen at all, which is
expected to be extremely complicated once bands of the second variant appear,
but rather concerns the loading device energy.  

In fact, as discussed in \cite{j43} and \cite{forclaz14}, for a suitable $C$-connected domain
$\Omega$ with corners, these simple counterexamples to metastability do not deliver
the points of first loss of metastability.  More complicated microstructures
still in an  $L^{\infty}$ local neighborhood, which are not simply Dirac masses, serve
as counterexamples to metastability at values of $\tau \in (\tau_1^+, \tau^+)$
for some $0 < \tau_1^+ < \tau^+$.  The 
experimentally observed
microstructure at transition (i.e., near $\calC^+$) is still somewhat more
complicated than these, and is clearly not a simple laminate.  If we accept that
the basis of the Schmid law is metastability as noted above, these more complicated
examples call into question the validity of that law in this context and also
indicate a dependence of hysteresis on the shape of the domain.  The latter is also
expected based on Example \ref{rooms}.

A detailed comparison of these upper bounds, either 
the one associated to $\tau^+$ or to $\tau_1^+$, with the experimentally measured
width of the hysteresis is difficult.  Experimentally, it is easiest to identify
$\calC^+$ with a possible loss of metastability, but the shoulder of the hysteresis
loop is not perfectly sharp, and some bands appear before reaching $\calC^+$, as
$\tau$ is increased.  Because of this ambiguity,  it is unclear where one should declare that
the homogeneous variant has begun to transform.  However, the overall impression one
gets when attempting this comparison is that the upper bounds associated to both
$\tau^+$ or to $\tau_1^+$ underestimate the size of the hysteresis.
Nevertheless there is rather
good qualitative agreement, in the sense that, for two specimens of different
orientation having widths of the hysteresis $\dist(\calC^+, \calC^-)$ differing by
a factor of 2, the corresponding upper bounds for the two cases also differ by a
factor of about 2.

\subsection{Dilatational transformation strain} \label{sectdil}
\setcounter{equation}{0}

Martensitic transformations having a pure dilatational transformation strain are rare,
but some examples are known in diffusional transformations, which involve shape change
and short or long range diffusion, depending on the overall composition of the alloy.  
The best known example is perhaps the ordering transformation from a disordered FCC phase to 
an L2$_1$ phase in Ni$_3$Al \cite{Wang2007871}, for which the ideas given above may 
be relevant.  

As a general treatment of dilatational transformation strains, consider two compact disjoint subsets $k_1, k_2$ of $(0,\infty)$, and corresponding energy wells
   $K_1 = k_1 SO(3)$
and $K_2 = k_2 SO(3)$, where $k_iSO(3)=\{k SO(3): k\in k_i\}$. That $K_1$ and $K_2$ are incompatible follows from  \cite[Theorem 4.4]{j35} and Lemma \ref{zhanglemma}, and also follows from the construction below, as we will indicate. 

We will construct  a polyconvex function $W_{0}$ that vanishes 
exactly on $K_1 \cup K_2$. This construction will enable us to embed $W_0$ in a family
$W_{\tau}, 0\leq \tau \leq 1$, for which we will prove metastability in the sense of Theorem \ref{metstab}. 

Following an observation of  \cite{j24} (see also \cite{p4,j19}), let
$1< \alpha < 3$ and let $\bar{h}:\R\to[0,\infty]$ be continuous with  $\bar{h} = \infty$ on $(-\infty, 0]$, $\bar h\in C^2(0,\infty)$ and 
 $\bar{h}^{-1}(0) = \{k^3: k\in k_1\cup k_2\}$.  We assume that $\bar{h}$ 
is convex outside a compact subset  $[a,b] \subset (0, \infty)$ containing $\bar{h}^{-1}(0)$,
so that there exists $\gamma > 0$ such that $\bar{h}'' \ge - \gamma$ on $(0, \infty)$.  
Let a convex function
$\tilde{h} \in C^2(\R)$ satisfy  
\be
\tilde{h}(t) = \left\{ \begin{array}{ll} - 3 c_1 t^{\alpha/3} &\mbox{if } a < t< b, \\ 
-3 c_1 (b+1)^{\alpha/3} &\mbox{if } t > b+1.     \end{array} \right. 
\ee
Such a convex function exists because the tangent at $t=b$  to $-3 c_1 t^{\alpha/3}$   lies below the constant function $-3 c_1 (b+1)^{\alpha/3}$ at $t=b+1$.

Define $h(t) =  \bar{h}(t) + \tilde{h}(t)$.
Since  $\bar{h}'' \ge - \gamma$, $1< \alpha<3$, and
$\bar{h}$ is convex outside   $[a,b]$,
there is $c_1 >0$ such that
\be
h''(t) = \bar{h}''(t) + \frac{1}{3}c_1\alpha  (3-\alpha)t^{-2 +\alpha/3}>0
\ee
on $[a, b]$ and so $h$ is convex on $\R$ and bounded below by 
$c_0 = -3 c_1 (b+1)^{\alpha/3}$.

Define an energy density for an isotropic elastic material by
\be
\lbl{isotrop}
W_0(A) = c_1(\lambda_1^{\alpha}+\lambda_2^{\alpha}+\lambda_3^{\alpha})
+ h(\lambda_1 \lambda_2 \lambda_3),
\ee
where $\lambda_1, \lambda_2, \lambda_3$ are the eigenvalues of $\sqrt{A^TA}$.
Because $h$ is convex and $1< \alpha < 3$,  $W_0$ is polyconvex by  \cite[Theorem 5.1]{j8}. 

Now we observe that $W_0$ has strict minima on $K_1 \cup K_2$. Indeed, since $h$ is bounded below and $h(0)=\infty$,
 the function $\sum_i c_1 \lambda_i^{\alpha} + h(\lambda_1 \lambda_2 \lambda_3)$ attains a minimum for
  $\lambda_1>0,  \lambda_2>0, \lambda_3>0$, where
\be
c_1 \alpha \lambda_i^{\alpha} = -h'(\lambda_1 \lambda_2 \lambda_3)\lambda_1 \lambda_2 \lambda_3.
\ee 
Hence $\lambda_1 = \lambda_2 = \lambda_3 = t^{1/3}$, where
$c_1 \alpha t^{\alpha/3} = -h'(t)t$.  These values of $t$ are critical points of the function
$\bar{h}(t) = 3c_1 t^{\alpha/3} + h(t) = W_0(t^{1/3} I)$, which has minimizers precisely on the 
set $\bar{h}^{-1}(0)$ by construction.  Hence, $W_0(A)$ has minimizers precisely 
on $K_1 \cup K_2$, where $W_0(A)=0$. 

Since $h$ is bounded below by $c_0$, 
the energy density $W_0$ satisfies the growth condition
\be
W_0(A) = c_1(\lambda_1^{\alpha}+\lambda_2^{\alpha}+\lambda_3^{\alpha})
+ h(\lambda_1 \lambda_2 \lambda_3) \ge c_0 + c_1 |A|^{\alpha}, 
\ee
so that $W_0$ satisfies conditions 
(H1) and (H2) of Section \ref{meta} for $p = \alpha$.  

To show that $K_1, K_2$ are incompatible we can consider the special case $\alpha=2$, when 
$$W_0(A)=c_1|A|^2+h(\det A).$$
If $\nu=(\nu_x)_{x\in\om}$ is an $L^\infty$ gradient Young measure with $\supp\nu_x\subset K_1\cup K_2$ a.e., we have that 
\be 0=\langle\nu_x,W_0\rangle
=c_1\langle\nu_x,|A|^2\rangle+\langle\nu_x,h(\det A)\rangle.\nonumber
\ee
Applying Jensen's inequality for the quasiconvex functions $|A|^2$ and $h(\det A)$, we have that 
$$ \langle\nu_x,|A|^2\rangle\geq |\bar\nu_x|^2,\;\;\langle\nu_x,h(\det A)\rangle\geq h(\det \bar\nu_x).$$
But $c_1  |\bar\nu_x|^2+ h(\det \bar\nu_x)=W_0(\bar\nu_x)\geq 0$.
Hence $\langle\nu_x,|A|^2\rangle=|\bar\nu_x|^2$, so that $\langle\nu_x,|A-\bar\nu_x|^2\rangle=0$ and hence $\nu_x=\delta_{Dy(x)}$ with $\bar\nu_x=Dy(x)$. But $Dy(x)\in K_1\cup K_2$ a.e., so that 
 $y$ is   a $W^{1,\infty}$
conformal mapping in 3 dimensions.  By classic results of Reshetnyak \cite{reshetnyak67}
all such mappings are smooth and  therefore   $Dy$ cannot be supported nontrivially on disjoint
closed sets.  Thus, $K_1, K_2$ are incompatible.  

The energy density $W_0$ can easily be extended to a family $W_{\tau}$ satisfying
the hypotheses $(\rm{H}1)'$-$(\rm{H}4)'$ of Proposition \ref{Wep}.  Let $\ep_0, \gamma$
be as in the transition layer estimate (Theorem \ref{transym1}) and let $0< \ep< \ep_0$
be fixed.  Since $N_{\ep}(K_1)$ and $N_{\ep}(K_2)$ are disjoint, we can let 
\be
W_{\tau}(A)  = W_0(A) - \tau H(\det A),
\ee
where $H:\R\to[0,1]$ is a smooth function satisfying
\be H(t)=\left\{\begin{array}{ll}1&\mbox{if }t\in\{k^3:k\in k_2\}:=N_2,\\
0&\mbox{if }\dist(t,N_2)>\rho(\ep),
\end{array}\right.\nonumber
\ee
where $\rho(\ep)>0$ is sufficiently small.
Clearly, $W_{\tau}$ satisfies the hypotheses $(\rm{H}1)'$-$(\rm{H}4)'$ with  $p=\alpha$.  Therefore, any $L^p$ gradient Young measure 
$\nu^*=(\nu_{x}^*)_{x\in\om}$ satisfying 
$\supp \nu^*_{x}\subset\{A\in N_{\ep}(K_1): W(A)=0\}$   is metastable
in the sense of Theorem \ref{metstab} for sufficiently small $\tau>0$, even though $W_\tau(A)=0$ for $A\in K_1$ and $W_\tau(A)=-\tau$ for $A\in K_2$. 
In \cite[Theorem 3.5]{j24} it is shown that such a result for free-energy functions of the form \eqref{isotrop} is not valid if the second energy well is arbitrarily deep.

Depending on the structure of $K_1$ the form of these metastable Young measures is
strongly restricted by   Reshetnyak's theorem, but $\nu_{x}^* = \delta_{Dy^*(x)}$,
where $y^*$ is a conformal mapping, is a possibility.

 Although it is interesting that pure dilatational phase transformations can be described by polyconvex free-energy functions,
the functions $W_{\tau}$ also
serve as lower bounds for  free-energy functions  for which metastability
in the sense of Theorem \ref{metstab} also holds. For example, by multiplying through the metastability estimate
by a sufficiently small positive constant, $W_{\tau}$ can be a lower bound for a variety
of non-polyconvex energy densities, with various choices of positive-definite linear elastic
moduli.  Of course, this modification also decreases 
$\gamma$, including the largest value of $\gamma$ for
which there is an $\ep_0>0$ satisfying the metastability theorem.  In this sense,
softening a material, but keeping the wells the same, lowers the barrier for metastabilty.

\subsection{Terephthalic acid} \label{secttere}
\setcounter{equation}{0}

Terephthalic acid \cite{daveyetal93,baileyetal67} is an interesting example in this context, since, among all reversible
structural transformations, it has an exceptionally large transformation strain.  It is the
largest strain in  a nominally reversible transformation in terms of 
$\dist(K_1, K_2)$ of which we are aware in a material that has no
rank-one connections between $K_1$ and $K_2$, that is, no solutions $A, B \in M^{3 \times 3}$ of
rank$(B-A) =1$, $A \in K_1, B \in K_2$.  The clearly visible large change-of-shape shown by
Davey et al.~\cite{daveyetal93} is remarkable.  

Terephthalic acid undergoes the transformation from Form I to Form II between $80^{\circ}$C and $100^{\circ}$C
\cite{daveyetal93}.  The transformation is reversible upon cooling to $30^{\circ}$C, at least for
a subset of crystallites; the application of a slight stress aids the reverse transformation.
The crystal structure and lattice parameter measurements of the I-II transformation have been
determined by Bailey et al.~\cite{baileyetal67}.  
Knowledge of these two structures and lattice parameters does not imply a unique
transformation stretch matrix due to the existence of infinitely many linear transformations
that take a lattice to itself.  The transformation stretch matrix  
\be
U = \left(\begin{array}{ccc} 0.970 & 0.038 & -0.121 \\
                             0.038 & 0.835 & -0.017 \\
                             -0.121 & -0.017 & 1.298 \end{array}    \right), \label{tsm}
\ee
is the one delivered by an algorithm \cite{chen14} designed to give the smallest distortion measured
by an appropriate norm.   The associated lattice correspondence of the two phases
(i.e., which vector is transformed to which vector) agrees with descriptions of the transformation
\cite{baileyetal67} and, semi-quantitatively, with photographs of crystals of the two phases \cite{daveyetal93}.
The eigenvalues of $U$ are 1.339, 0.939, 0.825.  
Nominally, there are two wells $K_1 = SO(3), K_2 =  SO(3)U$.  In fact twinning is observed in the
Form I, but this appears to be growth twinning \cite{daveyetal93}, and not produced during transformation.
(Both phases are triclinic, so there is no lowering of symmetry during transformation.)  
Since the middle eigenvalue of $U$ is not 1, there are no rank-one
connections between $K_1$ and $K_2$ \cite{j32}.

The best  sufficient conditions known that two wells $K_1$ and $K_2$ of this form are incompatible are due to 
Dolzmann, Kirchheim, M\"uller \& {\v{S}ver\'ak} \cite{dolzmann00}.  
Condition (ii) of their Theorem 1.2 is satisfied by $U$.  Therefore, $K_1$ and $K_2$ are incompatible,
and our metastability theorem applies to this case.

\section{Perspective on metastability and hysteresis} \label{sectper}
\setcounter{equation}{0}

In recent years different but related concepts of metastability have 
appeared in the literature \cite{james05,cui06,james09,kneupfer11,delville10,james13,kneupfer13,zwicknagl13}
motivated by some experimental results on a dramatic lattice parameter
dependence of the sizes of hysteresis loops.  These observations call
for new mathematical concepts of metastability whose form is not at
all clear.

Typical martensitic materials have energy wells of the form 
$K_1 = SO(3)$ and $K_2 = SO(3)U_1\ \cup \dots \cup\ SO(3)U_n$, with 
$n \ge 1$, and positive-definite, symmetric matrices $U_1, \dots, U_n \in M^{3 \times 3}$
satisfying $\{U_1, \dots, U_n\} = \{QU_1 Q^T: Q \in G \}$, where $G$ is a finite group of
orthogonal matrices (cf., (\ref{owells})).  Modulo the comments  in Section \ref{secttere} on the difficulties of determining the  transformation stretch matrix, $U_1$
for a particular material can be inferred from  X-ray measurements. 
All first order martensitic phase transformations have some amount of thermal hysteresis,
which refers to the fact that the transformation path on cooling differs from
that on heating.  A measurement of the fraction of the sample that has transformed 
vs.~temperature during a heating/cooling cycle gives a loop, called the hysteresis loop,
whose width is a typical measure of the hysteresis.  While indicative of
dissipation, the hysteresis loop does not collapse to zero as the loop is traversed 
more and more slowly, and so
is apparently not due to thermally activated processes, or dissipative mechanisms 
like viscosity or viscoelasticity. 

The matrix $U_1$ can be changed by changing the composition of the material.  
Suppose the ordered eigenvalues of $U_1$  are $\lambda_1 \le \lambda_2 \le \lambda_3$.   The main 
experimental observation underlying the analysis of hysteresis in the papers listed above is that, if a family of alloys is
prepared having a sequence of values of $\lambda_2$ approaching 1, the hysteresis gets
dramatically small.  Experimental graphs \cite{cui06} of hysteresis vs.~$\lambda_2$ show an apparent
cusp-like singularity at $\lambda_2= 1$, i.e., an extreme sensitivity of the size of the
hysteresis to $|\lambda_2 -1|$.  Very careful changes of composition in increments of 1/4 \% 
lead to alloys with exceptionally low hysteresis of $2-3^{\circ}$C 
in a variety of systems \cite{james13,zarnetta10}.
Since $\lambda_2 =1$ is a necessary and sufficient condition 
that there is a rank-one connection between $K_1$ and $K_2$, these results indicate that
the removal of stressed transition layers by strengthening conditions of compatibility
is relevant to hysteresis.  

A strict application of the ideas in this paper does
not explain this behaviour.  That is because, in all of these cases that have been studied experimentally,
$K_1$ and $K_2$ are compatible even in the starting alloys for which $\lambda_2$ is 
relatively far from 1.  In fact, all of these cases support solutions of the
crystallographic theory of martensite \cite{Wechsler53,j32}, implying that there exist
$A, B \in K_2$ and $C \in K_1$, such that $\rank(B-A) = 1$ and 
$\rank(\lambda B + (1- \lambda) A - C) = 1$ for some $0 < \lambda < 1$.  This series of
rank one connections implies the
existence of a Young measure $(\nu_x)_{x \in \Omega}$ supported nontrivially on $K_1, K_2$,
consisting of a laminate of two martensite variants $\dots A/B/A/B \dots$  meeting the austenite $C$ phase across a
vanishingly small planar transition layer.  In fact, the laminated martensite can be confined
between two such parallel planes which can be arbitrarily close together (see \cite{j60} for details).  
This family of test measures
then provide a counterexample to the metastability of say $\nu^* = \delta_C$ in the sense of
Theorem \ref{metstab}, even if $L^1$ in (\ref{meta7}) is replaced by $L^{\infty}$.

A special family of test functions $y_{\ep}$ of the type just described - a laminate $\dots A/B/A/B \dots$
confined between parallel planes at the distance $\ep$  and interpolated with $C$ in a layer near
these planes -- can be constructed explicitly.   Its energy can then be calculated by using a bulk energy of the type
studied in this paper with a suitable elastic energy density $W_{\tau}$, together with a interfacial energy per unit area
(taken as constant) on the $A/B$ boundaries. In this case $-\tau$ is interpreted as the temperature
and $\tau = 0$ is the transformation temperature.  This has been done in \cite{james09} and improved 
by Zwicknagl \cite{zwicknagl13}.  A graph of total energy vs.~$\ep$ gives a barrier whose height
is very sensitive to $|\lambda_2 - 1|$, and  decreases with decreasing temperature $-\tau$.  
If a critical value $\ep = \ep_{\rm crit}$ is introduced (modelling a pre-existing martensite nucleus of this type), 
and the temperature $\theta_c = -\tau$ is calculated at which $\ep = \ep_{\rm crit}$,
then the resulting graph of $0-\theta_c$ vs.~$\lambda_2$, all else fixed, has a singularity at
$\lambda_2 = 1$ and a shape similar to the
experimental graph of hysteresis vs.~$\lambda_2$. 


A related idea for a geometrically linear theory of the cubic-to-tetragonal transformation
and a sharp interface model of interfacial energy
is presented by Kn\"upfer, Kohn \& Otto \cite{kneupfer13} (see also \cite{kneupfer11}).
They show that the minimal bulk + interfacial energy of an inclusion of martensite of volume
$V$ scales as the maximum of $V^{2/3}, V^{9/11}$.  Minimal assumptions are made on the
shape of the inclusion.  If a bulk term is added to this energy of the form $-c \tau V$, $c>0$, modelling a lowering
of the martensite wells as the temperature $-\tau$ is decreased below transformation temperature,
then their result gives an energy barrier of the type described above.  They note that it would
be interesting to do a similar analysis of an austenite inclusion in martensite, and they
conjecture a higher energy barrier for the reverse transformation.  This is open, as is
a similar analysis for the cubic-to-orthorhombic case, where it would be interesting to 
investigate the dependence of the predicted barrier on $\lambda_2$.

Recently, even stronger conditions of compatibility called the {\it cofactor conditions} \cite{james05,james13}
have been closely satisfied in the ZnCuAu system by compositional changes, leading to the alloy 
Zn$_{45}$Au$_{30}$Cu$_{25}$. The cofactor conditions imply not only $\lambda_2=1$ but also a variety of other
microstructures with zero elastic energy.  The alloy Zn$_{45}$Au$_{30}$Cu$_{25}$
has a transformation strain $|U-I|$
comparable to that of the  alloys tuned to satisfy only $\lambda_2 = 1$, 
but shows still smaller hysteresis than
the lowest achieved by the $\lambda_2 = 1$ alloys, and also exceptional reversibility \cite{song13}.  
This example may indicate that 
metastability in phase transformations is not only sensitive to the wells being gradient compatible,
but also to the presence of a variety of different functions whose gradients
are nontrivially supported on $K_1, K_2$.  Another possibly relevant hypothesis is that
metastability is influenced by a possible sudden increase of the size of the quasiconvex hull of
the energy wells when the cofactor conditions are satisfied.  

An apparently obvious reconciliation of these concepts is to retain the idea of metastability, quantified
by local minimization, but to include a contribution for interfacial energy.
Accepted models of this type fall into two classes: sharp interface models and gradient models.
However, when combined with accepted notions of local minimization, neither of these models
give the behaviour described above. Before commenting on these two cases, we first note that
concepts of linearized stability are not relevant:  most measured values of linearized 
elastic moduli do soften as temperature is lowered to the phase transformation temperature, 
but the limiting value
of the minimum eigenvalue of the elasticity tensor is clearly positive
at transition in most cases, and this is the rule for strongly first order phase
transformations.

A typical sharp interface model assigns an energy per 
unit area to the jump set of $Dy$.  A comparative discussion of the energy minimisation problem
for several versions of these models is discussed in \cite{j58}. 
Consider the simple but relevant case of deciding whether a linear
deformation $y^*(x)= Ax, x \in \Omega,$ is metastable in some sense, where $A \in K_1$, $W_{\tau}(A) = 0$
and $W_{\tau}(K_2) = -\tau$, with $K_1$ and $K_2$ independent of $\tau$.  Suppose we have
favoured the low hysteresis situation by tuning the material as described above 
so that there exists $B \in K_2$ such that $B - A = a \otimes n$.  Putting aside linearized stability,
relevant concepts of local minimizer have the property that competitors can have
gradients on or near $K_2$, at least on
sufficiently small sets.  Trivially, if the underlying function space allows us to
smooth jumps of $Dy$, then a mollified version of the continuous function given for $x_0\in\om$ by
\be
y_{\ep}(x) = \left\{ \begin{array}{ll} B(x-x_0) & \mbox{ if }0< (x-x_0) \cdot n < \ep, \\
                                       A(x-x_0) & \mbox{ otherwise, }\end{array}   \right.  \label{test}
\ee
defeats metastability in $L^{\infty}$ as soon as $\tau>0$, predicting zero hysteresis.  Thus, of course, we
have to prevent smoothing.  This is easily done by forcing a jump, by restricting the domain of
$W_{\tau}$ to, say, $N_{\ep}(K_1) \cup N_{\ep}(K_2)$ with $\ep$ sufficiently small. However, in
that case, the prototypical test function (\ref{test}) for $\ep$ sufficiently small has positive
energy regardless how big is the value of $\tau$.  Thus, apparently for any of the accepted notions
of local minimizer, infinite hysteresis is predicted.

This dominance of interfacial energy at small scales, which overstabilizes linear deformations,
also occurs when gradient models of interfacial energy are combined with the bulk energies studied
here, as shown in \cite{j60}.   Consider a frame-indifferent  energy 
density $W_{\tau} \in C^2(M^{3 \times 3}_+)$,
continuous in $\tau$ and 
satisfying $W_{\tau}(A) \to \infty$ as $\det A \to 0$, and having positive-definite linearized
elasticity tensor at $I$.  Suppose
$W_{\tau}(K_1) = 0$ and $W_{\tau}(K_2) = -\tau$, for disjoint sets $K_1 = SO(3)$ and 
$K_2 = SO(3)U_1\ \cup \dots \cup\ SO(3)U_n$, and assume a total
energy of the form
\be
I(y) = \int_{\Omega} W_{\tau} (Dy) + \alpha |D^2y|^2 \, dx
\ee
with $\alpha>0$. 
In \cite{j60} it is shown that $y^*(x) = Rx + c, R \in SO(3), c \in \R^3$ is a local minimizer
of $I$ in $L^1$ {\it for every} $\tau>0$.  Again, infinite hysteresis is predicted.
Note that there may or may not be rank-one connections between $K_1$
and $K_2$.   It is probable that the the model introduced in \cite{j58},
that includes contributions from both sharp and diffuse interfacial energies, also leads to a metastability
result similar to that in \cite{j60}, though this has not been checked.

This inevitability of either zero hysteresis or infinite hysteresis, or, in the case of
linearized stability, predicted hysteresis that is too large, is avoided in models with
interfacial energy if, instead of using the standard approach to local minimization, one uses
a fixed neighbourhood of the proposed metastable deformation $y^*$, e.g., 
$\parallel\! y- y^*\! \parallel_{L^1} \le \ep_{\rm crit} $.  This is similar in spirit
to the introduction of the critical nucleus size above (also called $\ep_{\rm crit}$).
While this ultimately requires the formulation of an additional theory to predict
$\ep_{\rm crit}$, it would nevertheless be interesting to know whether this approach is consistent with
the observed lattice parameter dependence of hysteresis, as mentioned above.

Exotic models of interfacial energy that decrease the interfacial energy contribution when two interfaces
get close together could also restore finite hysteresis.  These are not widely accepted.

A better accepted idea, that is related to the introduction of the fixed neighbourhood
using $\ep_{\rm crit}$, is that, above transformation temperature, there are a variety of
small nuclei of martensite, stabilized by defects, waiting to grow, and there are
similar islands of austenite below transformation temperature.  While this consistent with
the (usually mild) dependence of hysteresis on preliminary processing, it is puzzling 
how this could yield hysteresis that is observed to be quite reproducible from alloy to alloy,
given similar processing. However, such thinking is based on the idea of  a single
``most dangerous'' nucleus determining transformation.   If, on the other hand, macroscopic
transformation arises from a collective interaction among many defects, so that something like
the law of large numbers is applicable, then one can imagine a reproducible size
of the hysteresis.  This kind of collective nucleation around defects, modelled by a position
dependent dissipation rate, can be seen in the
recent numerical simulations of DeSimone \&
Kru{\v z}{\'\i}k \cite{desimonekruzik13}.

Once metastability is lost, complex dissipative dynamic processes take place, involving interface motion,
microstructural evolution, and creation and annihilation of microstructure.  There is
currently insufficient information to formulate such dynamic laws, and the mathematical
theory in general of the dynamics of microstructure is primitive. 
There are a number of known possible approaches, including constitutive modelling, the sharp interface
kinetics of Abeyaratne \& Knowles \cite{abeyaratneknowles11} and the method of quasistatic evolution of Mielke \& 
Theil \cite{mielketheil04}.  All of these are reasonable based on general principles, but the latter seems 
to be the only one at present that can deal
with sufficient complexity of microstructure to begin to contemplate faithful dynamic predictions
\cite{desimonekruzik13}.  It is not yet known if these would be consistent with the 
sensitivity to conditions of compatibility mentioned above.

The surprising influence of conditions like $\lambda_2 = 1$ suggest that
simple kinematic approaches are valuable.  Their simplicity lies in the observation
that the conditions for loss of metastability seem to be much simpler than the
description of the dynamic process that takes place once metastability is lost.
From the perspective of this paper, and the
apparent success of the cofactor  conditions, it would be interesting to have methods of
quantifying the possibility of having many functions whose gradients are supported 
nontrivially on $K_1$ and $K_2$, especially those having finite area of the jump set
of the gradient.  A step in this direction is taken in recent work of R\"uland 
\cite{rueland13}.

\section*{Acknowledgements} The research of JMB was supported by by the EC (TMR contract FMRX - CT  EU98-0229 and
ERBSCI**CT000670), by 
EPSRC
(GRlJ03466, the Science and Innovation award to the Oxford Centre for Nonlinear
PDE EP/E035027/1, and EP/J014494/1), the European Research Council under the European Union's Seventh Framework Programme
(FP7/2007-2013) / ERC grant agreement no 291053 and
 by a Royal Society Wolfson Research Merit Award.  The research of RDJ was supported by NSF-PIRE (OISE-0967140), the MURI project Managing the Mosaic of Microstructure (FA9550-12-1-0458, administered by AFOSR), ONR (N00014-14-1-0714) and AFOSR (FA9550-15-1-0207). We are especially grateful to Jan Kristensen and Vladimir \v{S}ver\'{a}k for suggestions and advice, and to David Kinderlehrer, Bob Kohn, Pablo Pedregal and Gregory Seregin for their interest and helpful comments. We also thank the referees  for their careful reading of the paper and useful suggestions for improvement.

\bibliography{gen2,balljourn,ballconfproc,ballprep}
\bibliographystyle{abbrv}

\end{document}